\newcommand{\zero}{\overline{\mathbf{0}}}
\renewcommand{\H}{{\mathbf H}}
\newcommand{\wt }{\widetilde}
\newcommand{\Zt }{\widetilde{Z}^2}
\newcommand{\ov}{\overline}
\newcommand{\ld}{large deviation}
\newcommand{\bfw}{{\mathbf w}}
\newcommand{\bfc}{{\mathbf c}}
\newcommand{\bfv}{{\mathbf v}}
\newcommand{\bfA}{{\mathbf A}}
\newcommand{\bfU}{{\mathbf U}}
\newcommand{\bfD}{{\mathbf D}}
\newcommand{\bfN}{{\mathbf N}}
\newcommand{\bfK}{{\mathbf K}}
\newcommand{\bfM}{{\mathbf M}}
\newcommand{\bfH}{{\mathbf H}}
\newcommand{\bfP}{{\mathbf P}}
\newcommand{\bfI}{{\mathbf I}}
\newcommand{\bfX}{{\mathbf X}}
\newcommand{\bfu}{{\mathbf u}}
\newcommand{\bfQr}{{\mathbf Q}}
\newcommand{\bfQrm}{{\mathbf Q}^{(m)}}
\newcommand{\cmt}{continuous mapping theorem}
\newcommand{\fct}{function}
\newcommand{\slvary}{slowly varying}
\newcommand{\regvar}{regular variation}
\newcommand{\regvary}{regularly varying}
\newcommand{\st}{such that}
\newcommand{\std}{\stackrel{\rm d}{\rightarrow}}
\newcommand{\stp}{\stackrel{\P}{\rightarrow}}
\newcommand{\la}{\lambda}
\newcommand{\ds}{distribution}
\newcommand{\beao}{\begin{eqnarray*}}
\newcommand{\eeao}{\end{eqnarray*}}
\newcommand{\beam}{\begin{eqnarray}}
\newcommand{\eeam}{\end{eqnarray}}
\newcommand{\barr}{\begin{array}}
\newcommand{\earr}{\end{array}}
\newcommand{\red}{\color{darkred}}
\newcommand{\blue}{\color{darkblue}}
\definecolor{darkblue}{rgb}{.1, 0.1,.8}
\definecolor{darkgreen}{rgb}{0,0.8,0.2}
\definecolor{darkred}{rgb}{.8, .1,.1}
\newcommand{\bco}{\begin{corrolary}}
\newcommand{\eco}{\end{corrolary}}
\newcommand{\E}{\mathbb{E}}
\renewcommand{\P}{\mathbb{P}}
\newcommand{\1}{\mathbf{1}}
\newcommand{\R}{\mathbb{R}}
\newcommand{\N}{\mathbb{N}}
\newcommand{\C}{\mathbb{C}}
\newcommand{\bfC}{{\mathbf C}}
\newcommand{\Z}{\mathbb{Z}}
\DeclareMathOperator{\e}{e}
\DeclareMathOperator{\Proj}{Proj}
\newcommand{\x}{{\mathbf x}}
\newcommand{\y}{{\mathbf y}}
\newcommand{\X}{{\mathbf X}}
\newcommand{\M}{{\mathbf M}}
\newcommand{\z}{{\mathbf Z}}
\newcommand{\dint}{\,\mathrm{d}}
\newcommand{\twonorm}[1]{\|#1\|_2}
\newcommand{\frobnorm}[1]{\|#1\|_F}
\newcommand{\ltwonorm}[1]{\|#1\|_{\ell_2}}
\newcommand{\vep}{\varepsilon}
\newcommand{\nto}{n \to \infty}
\newcommand{\xto}{x \to \infty}
\newcommand{\rhs}{right-hand side}
\newcommand{\ts}{time series}
\newcommand{\tsa}{\ts\ analysis}
\newcommand{\fidi}{finite-dimensional distribution}
\newcommand{\rv}{random variable}
\newcommand{\Dr}{D}
\newcommand{\MP}{Mar\v cenko--Pastur }
\newcommand{\ex}{{\rm e}\,}
\def\tag{\refstepcounter{equation}\leqno }
\newtheorem{lemma}{Lemma}[section]
\newtheorem{theorem}[lemma]{Theorem}
\newtheorem{proposition}[lemma]{Proposition}
\newtheorem{example}[lemma]{Example}
\newtheorem{remark}[lemma]{Remark}
\newcommand{\cid}{\stackrel{d}{\rightarrow}}
\newcommand{\cip}{\stackrel{\P}{\rightarrow}}
\newcommand{\as}{{\rm a.s.}}
\newcommand{\pp}{point process}
\newcommand{\con}{convergence}
\newcommand{\seq}{sequence}
\newcommand{\ms}{measure}
\newcommand{\asy}{asymptotic}
\begin{document}
\today
\bibliographystyle{acm}
\title[Large sample autocovariance matrices for linear processes]
{Large sample autocovariance matrices  of linear processes with heavy tails}
\thanks{Johannes Heiny was supported by the Deutsche Forschungsgemeinschaft (DFG) via RTG 2131 High-dimensional Phenomena in Probability -- Fluctuations and Discontinuity. Thomas Mikosch's research was
generously supported by an Alexander von Humboldt Research Award.}
\author[Johannes Heiny]{Johannes Heiny}
\author[Thomas Mikosch]{Thomas Mikosch}
\address{Fakult\"at f\"ur Mathematik,
Ruhruniversit\"at Bochum,
Universit\"atsstrasse 150,
D-44801 Bochum,
Germany}
\email{johannes.heiny@rub.de}
\address{Department  of Mathematical Sciences,
University of Copenhagen,
Universitetsparken 5,
DK-2100 Copenhagen,
Denmark}
\email{mikosch@math.ku.dk\,, www.math.ku.dk/$\sim$mikosch}
\begin{abstract}
We provide asymptotic theory for certain functions of the sample autocovariance 
matrices of a high-dimensional time series with infinite fourth moment. The time series exhibits linear dependence across the coordinates and through time.
Assuming that the dimension increases with the sample size, we provide theory for the eigenvectors of the sample autocovariance matrices and find explicit approximations of a simple structure, whose finite sample quality is illustrated for simulated data. 
We also obtain the limits of the normalized eigenvalues of functions of the sample autocovariance matrices in terms of cluster Poisson point processes. In turn, we derive the distributional limits of the largest eigenvalues and functionals acting on them. In our proofs, we use 
large deviation techniques for heavy-tailed processes, point process techniques motivated by extreme value theory, and related continuous mapping arguments.
\end{abstract}
\keywords{Regular variation, sample autocovariance matrix, linearly dependent entries,
largest  eigenvalues, trace, point process
  convergence, cluster Poisson limit,
infinite variance stable limit, Fr\'echet distribution, large deviations}
\subjclass{Primary 60B20; Secondary 60F05 60F10 60G10 60G55 60G70}

\maketitle
%\tableofcontents

%---------------------------------------------------------------------------
\section{Introduction}\label{sec:intro}\setcounter{equation}{0}
\subsection{Some history}
In \tsa\ 
the notions of autocovariance, autocorrelation and their sample versions
are basic tools for the study of the (linear) dependence 
structure, spectral analysis, 
parameter estimation, goodness-of-fit, change-point detection, etc.; see 
for example the classical monographs \cite{brockwell:davis:1991,priestley:1981}. When considering random matrices $\X=\X_n= (\x_1,\ldots, \x_n)$
with high-dimensional time series observations $\x_t= (X_{1t},\ldots,X_{pt})'$, $t\in \Z$, the main focus of interest 
has been on the limiting spectral \ds\ of $\bfX$ and on the \asy\
properties of the eigenvalues and eigenvectors of the {\em sample covariance matrix} $\X\X'$;
%\beao
%\X\X' = \big(\sum_{t=1}^n X_{it}X_{jt}\big)_{i,j=1,\ldots,p}\,;
%\eeao
see for instance \cite{bai:silverstein:2010}.
From the observations $(\x_t)_{t\in \Z}$ one can also construct the $p\times n$ matrices
\begin{equation}\label{eq:26}
\X_n(s)= (\x_{1+s},\ldots, \x_{n+s}) \,,\qquad s=0,1,2,\ldots\,,
\end{equation}
while we refer to $\X=\X_n(0)$ as the {\em data matrix}.
Now, in analogy with the sample autocovariance \fct\ of a stationary process,  we introduce the (non-normalized)
{\em sample autocovariance matrices at lag $s$}:
\beam\label{eq:sample1}
\X_n(0)\X_n(s)'\,,\qquad s=0,1,2,\ldots\,.
\eeam
For $s=0$, we obtain the sample covariance matrix.
\par
To the best of our knowledge,
the idea of using functions of the {\em sample autocovariance matrices} 
originates from the  paper \cite{lam:yao:2012}. 
The authors work in the framework
of factor models for $\bfX$ and under light-tail assumptions on the 
entries $X_{it}$. 
The main goal of using  sample autocovariance matrices  
in \cite{lam:yao:2012} was to
derive a rule for determining a number of significant 
eigenvalues and eigenvectors for principal component analysis (PCA) in a high-dimensional \ts\ setting.
This was achieved by exploiting the additional information about the
dependence of the \ts\ $(\x_t)_{t\in \Z}$, contained
in the sample autocovariance matrices  $\X_n(0)\X_n(s)'$ for different lags $s\ge 0$.
\par
Recently, a whole series of articles on sample autocovariance matrices
was published. Again, factor models are assumed for 
describing the dynamics of the multivariate \ts\  
$(\x_t)_{t\in \Z}$. 
The authors of \cite{li:wang:yao:2017} study
a ratio estimator for the number of relevant eigenvalues 
based on singular values of lagged
sample autocovariance matrices.
The paper proposes a complete theory of such sample singular
values for both the factor and noise parts under the large-dimensional
scheme where the dimension and the sample size grow proportionally to infinity. The papers \cite{wang:yao:2015,wang:yao:2016} consider a 
moment approach for determining the limiting spectral \ds\ of the 
singular values of the autocovariance matrices and for deriving the 
\con\ of the largest singular value.
The limiting spectral \ds\ of a symmetrized 
sample autocovariance matrix is studied in  
\cite{jin:wang:chen:bai:nair:2014,bai:wang:2015, liu:aue:paul:2015, wang:aue:paul:2017} while
\cite{wang:jin:bai:nair:2015} consider the extreme eigenvalues
of such a matrix. The limiting  spectral distribution 
of sample autocovariance matrices for factor models
is investigated in \cite{li:pan:yao:2015}.

\subsection{Our model}
In this paper we study the singular values of \fct s of the 
sample autocovariance matrices \eqref{eq:sample1} at different lags $s$.
Our model assumptions are quite distinct from most of the literature.
\subsubsection*{Growth condition on $p$}
We describe high-dimensionality of the time series observations $\x_t= (X_{1t},\ldots,X_{pt})'$
by assuming that $p=p_n\to \infty$ as $\nto$.
To be precise, we assume an integer sequence
\begin{equation}\label{eq:p}
p=p_n=n^\beta \ell(n), \quad n\ge1,\tag{$C_p(\beta)$}
\end{equation}
where $\ell$ is a slowly varying function and $\beta\in [0,1]$. 
This condition is more general than the growth conditions in the literature; 
see for example \cite{johnstone:2001,auffinger:arous:peche:2009,tao:vu:2012}, 
where it is assumed that $p/n\to\gamma\in (0,\infty)$. Condition
\eqref{eq:p} is also more general than in
\cite{davis:mikosch:pfaffel:2016,davis:pfaffel:stelzer:2014} 
who have restrictions on the size of $\beta$, depending
on the heaviness of the tails of $\X$.
\subsubsection*{Linear dependence}
From a time series perspective it is natural to assume dependence
between the entries $X_{it}$ both through time $t$ and across the rows $i$. 
In the aforementioned literature, dependence through time and across rows
is often described by a factor model. This kind of model has been successfully
used in econometrics. 
\par
We assume  {\em linear dependence}
between the rows and columns of $\X$:
\begin{equation}\label{eq:1}
X_{it}=\sum_{l\in \Z}\sum_{k\in \Z} h_{kl} \,Z_{i-k,t-l}\,,\qquad i,t\in\Z\,,
\end{equation}
where $(Z_{it})_{i,t\in \Z}$ is an iid  field of \rv s 
with generic element $Z$ and  $(h_{kl})_{k,l\in\Z}$ 
is a suitable array of real numbers \st\ the infinite series in  \eqref{eq:1}
converges a.s.; see \eqref{eq:2a} below. 
\par
The model \eqref{eq:1} was introduced in \cite{davis:pfaffel:stelzer:2014}, assuming the rows iid, and, in the
present form, used in \cite{davis:mikosch:pfaffel:2016,davis:heiny:mikosch:xie:2016}. Linear dependence is a natural concept in \tsa ; it
also allows one to describe the asymptotic properties of 
the eigenstructure of $\X\X'$ in  a transparent way.
\subsubsection*{Heavy-tail condition} 
In all the existing literature on sample autocovariance matrices
it is assumed that the 4th moment of the entries $X_{it}$ is finite.
We will refer to this condition as {\em light tails}. If 4th moments are
infinite we instead refer to {\em heavy tails.} The reason for this
distinction is that there is a phase transition in the limit behavior
of the largest eigenvalues of the sample covariance matrix and, as we will see later, also of the largest singular values of the  sample autocovariance matrices.  
\par
In the case of 
iid light-tailed $(X_{it})$ it is known that the 
largest eigenvalue of $\bfX\bfX'$ typically has a Tracy-Widom 
limit \ds ; see for example \cite{johnstone:2001,tao:vu:2012}
for benchmark results. This is in sharp contrast to the heavy-tail case.
Due to work by \cite{soshnikov:2004,soshnikov:2006,auffinger:arous:peche:2009} we know that the 
largest eigenvalue of the suitably normalized matrix $\bfX\bfX'$
has a Fr\'echet limit \ds , 
\beao
\Phi_{\alpha/2}(x)=\ex^{-x^{-\alpha/2}}\quad \mbox{for some $\alpha\in (0,4)$,}
\eeao
which is one of the max-stable \ds s,
i.e., one of the limit \ds s of normalized and centered maxima
of an iid \seq , 
see \cite[Chapter~3]{embrechts:kluppelberg:mikosch:1997}.
\par   
The assumption of infinite 4th moment is not sufficient to derive a precise
weak limit theory for eigenvalues and singular values. Therefore,
as in 
\cite{soshnikov:2006,auffinger:arous:peche:2009,heiny:mikosch:2017:iid} in the iid case and in
 \cite{davis:pfaffel:stelzer:2014,davis:mikosch:pfaffel:2016,davis:heiny:mikosch:xie:2016} in the linear dependence case 
we assume that $Z$ is {\em \regvary } in the sense that the following tail balance condition holds
\begin{equation}\label{eq:27}
\P(Z>x)\sim p_+ \dfrac{L(x)}{x^{\alpha}}\quad\mbox{and}\quad  \P(Z<-x)\sim p_-
\dfrac{L(x)}{x^{\alpha}}\,,\qquad \xto\,,
\end{equation}
for some tail index $\alpha\in (0,4)$, constants $p_+,p_-\ge 0$ with $p_++p_-=1$ and a \slvary\ \fct\ $L$. The \regvar\ condition for $\alpha\in (0,4)$ 
implies that we consider the heavy-tail case where 
both $\E[Z^4]=\infty$ and $\E[X^4]=\infty$; see \cite{davis:heiny:mikosch:xie:2016, heiny:mikosch:2017:iid, heiny:mikosch:2017:corr} for collections of results which show  
the stark differences between the heavy-tail and light-tail cases.
In addition, we assume $\E[ Z]=0$ 
whenever $\E [|Z|]<\infty$.
Moreover, we also require the {\em summability condition}
\begin{equation}\label{eq:2a}
\sum_{l \in \Z} \sum_{k\in \Z} |h_{kl}|^{\delta} <\infty\quad\mbox{for some $\delta\in (0,\min({\alpha/2},1))$.}
\end{equation} 
The conditions \eqref{eq:27}, \eqref{eq:2a} and $\E[Z]=0$ if $\E[|Z|]<\infty$ ensure
the a.s.~absolute convergence of the series in \eqref{eq:1}. Moreover, the marginal and
\fidi s of the field $(X_{it})$ are \regvary\ with index $\alpha$; see
for example \cite{embrechts:kluppelberg:mikosch:1997}, Appendix A3.3. Therefore we also refer to $(X_{it})$ and $(Z_{it})$
as {\em \regvary\ fields.} Notice that \regvar\ of $(Z_{it})$ and the \con\ of \eqref{eq:1} imply that $(X_{it})$ constitutes a strictly stationary random field; we denote a generic element by $X$.

\subsection{Functions of sample autocovariance matrices}
Recall the definition of the {\em sample autocovariance matrix at lag $s$} from 
\eqref{eq:sample1}.
%From the field $(X_{it})$ we can construct the $p\times n$ matrices
%\begin{equation}\label{eq:26}
%\X_n(s)= (X_{i,t+s})_{i=1,\ldots,p;t=1,\ldots,n}\,,\qquad s=0,1,2,\ldots\,,
%\end{equation}
%while we often use the previous notation $\X=\X_n(0)$.
%Now we can introduce the (non-normalized)
%\beam\label{eq:sample1}
%\X_n(0)\X_n(s)'\,,\qquad s=0,1,2,\ldots\,.
%\eeam
%For $s=0$, we obtain the {\em sample covariance matrix.}
%To the best of our knowledge,
%the idea of using functions of the sample autocovariance matrices originates 
%from a paper by Lam and Yao~\cite{lam:yao:2012} in the context of factor models which are distinct
%from \eqref{eq:1}, and their assumptions on $(X_{it})$ imply light tails. The main goal in \cite{lam:yao:2012} was to
%derive a rule for determining a number of significant eigenvalues and eigenvectors for principal component analysis (PCA) 
%in a high-dimensional \ts\ $\X$.
We are  interested in the asymptotic behavior (of \fct s) of the eigen- and singular values of the matrices
\begin{eqnarray}\label{eq:sample}
\bfC_n(s) =
\left\{\begin{array}{ll}
\X_n(0)\X_n(s)' \,, & \mbox{if } \alpha<2(1+\beta), \\
 \X_n(0)\X_n(s)'-\E[\X_n(0)\X_n(s)'] \,, & \mbox{if } \alpha>2(1+\beta),
\end{array}\right. \qquad s=0,1,2,\ldots .
\end{eqnarray}\noindent
For $\alpha>2(1+\beta)$, the centering $\E[\X_n(0)\X_n(s)']$ is needed to ensure a non-degenerate limiting spectrum of $\bfC_n(s)$. A similar centering was used in \cite{davis:pfaffel:stelzer:2014,davis:mikosch:pfaffel:2016,davis:heiny:mikosch:xie:2016}.
The case $\alpha= 2(1+\beta)$ is slightly more technical, but can be handled as well; see Remark \ref{rem:speciala} below. 
\par
The eigenvalues of the non-symmetric matrix $\bfC_n(s)$ for $s\ge 1$ can be complex. One way to avoid this is
to calculate the {\em singular values} of this matrix,
i.e., the square roots of the eigenvalues of the non-negative definite matrix 
$\bfC_n(s) \bfC_n(s)'$. The largest of these singular
values is the {\em spectral norm} $\twonorm{\bfC_n(s)}$.

In this paper, we study the \asy\ behavior of the eigenvalues and eigenvectors of the sum
\begin{equation}\label{eq:Ps1s2}
\bfP_n(s_1,s_2)= \sum_{s=s_1}^{s_2} \bfC_n(s) \bfC_n(s)'\quad \mbox{for fixed $0\le s_1\le s_2$.}
\end{equation}
In what follows, we will often suppress the dependence of $\bfC_n$ and $\bfP_n$ on $n$ and simply 
write $\bfC$ and $\bfP$.
This research is motivated by \cite{lam:yao:2012} who considered the ratio of successive largest
eigenvalues of $\bfP_n(1,s)$ for various values $s\ge 1$. The goal was to find a value $s$
\st\ the relevant information about the eigenvalues contained in the sample autocovariances
$\X_n(0)\X_n(s)'$ is exhausted.

\subsection{Motivation and structure of this paper}
When looking at the coordinates of the eigenvectors of sample autocovariance matrices of 
financial time series, we noticed that certain patterns, in particular around the 
largest coordinate values, occurred repetitively in several eigenvectors. 
Our goal was to find some theoretical explanation for this phenomenon. 
Another challenge was added by the stylized fact
that financial time series are heavy-tailed. In contrast, 
most of the literature on dimension reduction and high-dimensional time series focuses 
on the light-tailed case. 
\par
We assume a linear dependence structure through time and across the rows for the underlying \ts . The 
eigenvalues of large sample covariance matrices of linear processes was already studied  
in Davis et al. \cite{davis:pfaffel:stelzer:2014,davis:heiny:mikosch:xie:2016,davis:mikosch:pfaffel:2016}.
The sample autocovariance matrices call for additional challenges since they require to understand
the interplay between the largest values of the noise $(Z_{it})$, the lag $s$ and the coefficient matrix $(h_{kl})$.
We use \ld\ theory for sums of heavy-tailed \rv s in combination with \pp\ \con\ results and continuous
mapping arguments to derive asymptotic theory for the eigenvectors and eigenvalues of 
large sample autocovariance matrices for time series with infinite fourth moment. Our results
are very explicit as regards the dependence structure and magnitude of the largest eigenvalues
as well as the construction of the corresponding eigenvectors.
\par
This paper is organized as follows. Due to the complexity of the model the notation in this paper is rather involved. Therefore, 
in Section~\ref{sec:2}, we introduce the most important quantities used throughout the paper.
In Section~\ref{sec:3}, we present the main \asy\ results. Theorem~\ref{thm:mainn} provides
explicit approximations to the eigenvalues of the matrix sum  $\bfP_n(s_1,s_2)$. The major contribution of this work is the description of the  eigenvectors of $\bfP_n(s_1,s_2)$.  Theorem~\ref{thm:eigenvector} contains explicit approximations of these eigenvectors under the additional restriction that 
the coefficient matrix $(h_{kl})$ has only finitely many non-zero entries. Extensions to
coefficient matrices with infinitely many non-zero entries seem possible under an additional condition on the decay of $h_{kl}$ for $|k|\vee |l|\to \infty$. In Section~\ref{subsec:32} we also
include detailed examples of our proposed eigenvector and eigenvalue approximations for simulated data and the S\&P 500 log-returns. In  Figures \ref{fig:abc},\ref{fig:00} and \ref{fig:11} the reader can convince him/herself with the naked eye that the eigenvectors possess the structure predicted by our asymptotic theory.
Theorem~\ref{thm:pp}
presents results on the weak \con\ of the \pp\ of the normalized eigenvalues of $\bfP_n(s_1,s_2)$
towards some cluster Poisson process. The limiting \pp\ allows one to derive the \asy\
structure of the largest eigenvalues of  $\bfP_n(s_1,s_2)$. Applications of the 
\cmt\ yield \asy\ theory for \fct als acting on the \seq\ of the eigenvalues
such as the spectral gap, the ratio of the largest eigenvalue and the trace. In 
Section~\ref{subsec:34} we derive analogous results on the eigenstructure 
of sums of the  symmetrized matrices $(\bfC_n(s)+\bfC_n(s)')/2$. Section~\ref{sec:limit} describes the limiting spectral distribution of the sample covariance matrix $\X\X'$ when $p$ and $n$ are proportional. Section~\ref{sec:proof} contains the proofs 
of the main theorems.

%----------------------------------------------------------------------------
\section{More notation}\label{sec:2}\setcounter{equation}{0}
Before we can formulate the main results we have to introduce relevant notation to be used throughout.
\subsubsection*{Order statistics}
The order statistics of the field
$(Z_{it}^2)_{i=1,\ldots,p;t=1,\ldots,n}$ 
 \begin{equation}\label{eq:zorder}
Z_{(1),np}^2 \ge Z_{(2),np}^2 \ge  \cdots \ge Z_{(np),np}^2, \qquad n,p\ge 1\,.
\end{equation}
\subsubsection*{Sums of squares}

\begin{eqnarray}\label{eq:newD}
D_i=D_i^{(n)} =
\left\{\begin{array}{ll}
\sum_{t=1}^n Z_{it}^2 \,, & \mbox{if } \alpha<2(1+\beta), \\
\sum_{t=1}^n Z_{it}^2 - n\E[Z^2] \,, & \mbox{if } \alpha>2(1+\beta),
\end{array}\right. \qquad i=1,\ldots,p; \quad n\ge 1\,,
\end{eqnarray}\noindent
with generic element $D$ and their ordered squared values for fixed $n$,
\beam\label{eq:help6}
D_{(1)}^2=D_{L_1}^2\ge \cdots \ge D_{(p)}^2=D_{L_p}^2\,.
\eeam
We assume without loss of generality that $(L_1,\ldots,L_p)$ is
a permutation of $(1,\ldots,p)$.

\subsubsection*{The matrices $\M(s)$ and $\bfK(s_1,s_2)$} We introduce some auxiliary matrices
derived from the coefficients $(h_{kl})_{k,l\in \Z}$:
\beao
\H(s)=(h_{k,l+s})_{k,l\in \Z}, \qquad \M(s)= \H(0)\H(s)'\,,\qquad s\ge 0\,.
\eeao
Notice that
\begin{equation}\label{eq:m}
(\M(s))_{ij}= \sum_{l\in \Z} h_{i,l} h_{j,l+s}, \qquad i,j \in \Z .
\end{equation}
For $0\le s_1\le s_2 <\infty$, we define the positive semi-definite matrix
\begin{equation}\label{eq:Ks1s2}
\bfK(s_1,s_2)= \sum_{s=s_1}^{s_2} \M(s)\M(s)'
\end{equation}
and denote its ordered eigenvalues by 
\begin{equation}\label{eq:v1}
v_1^2(s_1,s_2) \ge v_2^2(s_1,s_2) \ge\cdots \,.
\end{equation}
We interpret $v_i(s_1,s_2)$ as the positive square root of $v_i^2(s_1,s_2)$.

Throughout this paper we assume that $\bfK(s_1,s_2)$ is not the null-matrix. Let $r(s_1,s_2)$ be the rank of $\bfK(s_1,s_2)$ so that $v_{r(s_1,s_2)}(s_1,s_2)>0$ while $v_{r(s_1,s_2)+1}(s_1,s_2)=0$ if $r(s_1,s_2)$ is
finite, otherwise $v_i(s_1,s_2)>0$ for all $i$.
\par
The singular values of $ \M(s)$ are $(v_i(s,s))$; for ease of notation we will 
sometimes denote them by $ (v_i(s))$.
Under the summability condition \eqref{eq:2a} on $(h_{kl})$ for fixed $0\le s_1\le s_2<\infty$, denoting the Frobenius norm by~$\|\cdot\|_F$,
\beam\label{eq:tracea}
\sum_{i=1}^\infty v_i^2(s_1,s_2) &= & \sum_{s=s_1}^{s_2} \sum_{i=1}^\infty v_i^2(s)=
  \sum_{s=s_1}^{s_2} \frobnorm{\M(s)}^2= \sum_{s=s_1}^{s_2} \sum_{i,j\in \Z} \sum_{l_1,l_2 \in \Z} h_{i,l_1} h_{j,l_1+s}h_{i,l_2}
h_{j,l_2+s}\nonumber\\
&\le& { \sum_{s=s_1}^{s_2} \Big(\sum_{l_1,l_2 \in \Z} \sum_{i\in \Z} |h_{i,l_1} h_{i,l_2}|\Big)^2}
\le c \, (s_2-s_1) \sum_{l_1\in \Z} \sum_{i \in \Z} |h_{i,l_1}| <\infty\,.
\eeam
Therefore all eigenvalues $v_i^2(s_1,s_2)$ are finite and the ordering
  \eqref{eq:v1}
is justified.\\[1mm]
{\em Here and in what follows, we write $c$ for any positive constant whose value is not of interest.}

\subsubsection*{Normalizing sequence}
We define $(a_k)$ by
\beao
\P(|Z|>a_k)\sim k^{-1}\,,\qquad k\to\infty\,.
\eeao
The largest eigenvalues of random matrices will typically be 
normalized by the \seq\ $(a_{np}^4)$, where $p$ is given in \eqref{eq:p}.
\subsubsection*{Eigenvalues of the sample autocovariance matrices} The ordered eigenvalues of 
$\bfP(s_1,s_2)$  in \eqref{eq:Ps1s2} are
\beam\label{eq:la}
\lambda_{1}(s_1,s_2) \ge \cdots \ge \lambda_{p}(s_1,s_2)\,,
\eeam
where we suppress the dependence on $n$ in the notation. 

\subsubsection*{Approximations to eigenvalues}\label{subsec:defdelta}
Approximations to the ordered eigenvalues $\lambda_{i}(s_1,s_2)$  will be given in
terms of the ordered values 
\beam\label{eq:gammadelta}\barr{ll}
\delta_{1}(s_1,s_2)\ge \cdots \ge \delta_{p}(s_1,s_2)&\mbox{from}\quad \big\{Z_{(i),np}^4 v_j^2(s_1,s_2)\,, i=1,\ldots,p\,;j=1,2,\ldots\big\}\,,\\
\gamma_{1}(s_1,s_2)\ge \cdots \ge \gamma_{p}(s_1,s_2)&\mbox{from}\quad
\big\{D_{i}^2 v_j^2(s_1,s_2), i=1,\ldots,p\,;j=1,2,\ldots\big\}\,.
\earr
\eeam

%---------------------------------------------------------------------------
\section{Approximations of eigenvalues and eigenvectors}\label{sec:3}\setcounter{equation}{0}
%---------------------------------------------------------------------------
In this section we provide the main approximation results for the ordered eigenvalues and the corresponding  
eigenvectors of the
sample autocovariance matrices of the linear model \eqref{eq:1}. The relevant notation is given 
in Sections~\ref{sec:intro} and \ref{sec:2}. 

\subsection{Eigenvalues of the sample autocovariance \fct}\label{sec:mainresult}

\begin{theorem}[Eigenvalues of $\bfP(s_1,s_2)$]\label{thm:mainn}
Consider the linear process \eqref{eq:1} under
\begin{itemize}
\item
the growth condition \eqref{eq:p} on $(p_n)$ for some $\beta\in[0,1]$,
\item
the \regvar\ condition \eqref{eq:27}
for some $\alpha\in (0,4)\backslash \{2(1+\beta)\}$,
\item the centering condition
$\E[Z]=0$ if $\E[|Z|]$ is  finite,
\item
the summability condition
\eqref{eq:2a} on the coefficient matrix  $(h_{kl})$.
\end{itemize}
Then we have for $0\le s_1\le s_2<\infty$,
\begin{equation}\label{eq:mainn1}
a_{np}^{-4} \max_{i=1,\ldots,p} |\la_{i}(s_1,s_2)-\gamma_{i}(s_1,s_2)| \cip 0, \quad \nto.
\end{equation}
Moreover, if $\alpha<2(1+\beta)$, then
\begin{equation}\label{eq:mainn2}
a_{np}^{-4} \max_{i=1,\ldots,p} |\la_{i}(s_1,s_2)-\delta_{i}(s_1,s_2)| \cip 0, \quad \nto.
\end{equation}
\end{theorem}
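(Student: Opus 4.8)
The plan is to show that the large eigenvalues of $\bfP(s_1,s_2)$ are produced one heavy noise value at a time by the \regvary\ field $(Z_{it})$, the deterministic filter entering only through the eigenvalues $v_j^2(s_1,s_2)$ of $\bfK(s_1,s_2)$. If a single entry $Z_{ab}$ is large and all other noise is switched off, then $X_{it}=h_{i-a,t-b}Z_{ab}$ and a direct computation gives $(\bfC(s))_{ij}\approx Z_{ab}^2\,(\M(s))_{i-a,j-a}$; aggregating over a whole row of the noise, the contribution of row $a$ to $\bfC(s)$ is $D_a\,\M^{(a)}(s)$, where $\M^{(a)}(s)$ denotes $\M(s)$ shifted so that its $(0,0)$-entry sits in position $(a,a)$ and $D_a$ is as in \eqref{eq:newD}. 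Forming $\sum_{s}\bfC(s)\bfC(s)'$ then yields, for each $a$, a block $D_a^2\,\bfK^{(a)}(s_1,s_2)$ whose non-zero eigenvalues are exactly $D_a^2\,v_j^2(s_1,s_2)$, $j\ge1$. The ordered values of $\{D_a^2 v_j^2(s_1,s_2)\}$ are the $\gamma_i(s_1,s_2)$, which is the content of \eqref{eq:mainn1}; the task is to justify that the cross terms discarded in this heuristic are negligible at the scale $a_{np}^4$.

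To make this rigorous I would first use the summability condition \eqref{eq:2a} to truncate $(h_{kl})$ to a finitely supported array, prove the statement for each truncation, and pass to the limit; the bound \eqref{eq:tracea} guarantees that the $v_j^2(s_1,s_2)$ and the matrices $\bfC(s),\bfP(s_1,s_2)$ change by an amount negligible at scale $a_{np}^4$, uniformly in $n$, so that it suffices to treat a filter of finite order. Next I would expand $\bfC(s)=\X_n(0)\X_n(s)'$ in the noise and split each entry into its \emph{diagonal part}, in which one and the same $Z_{ab}$ occurs squared, and an \emph{off-diagonal part} collecting the genuine cross terms $Z_{ab}Z_{a'b'}$ with $(a,b)\neq(a',b')$. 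The diagonal part equals $\bfC^{\mathrm{diag}}(s)=\sum_a D_a\,\M^{(a)}(s)$ up to boundary corrections of order smaller than $a_{np}^2$.

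The main obstacle is to control the off-diagonal part $\bfC^{\mathrm{off}}(s)$. Writing $\bfP(s_1,s_2)=\sum_s\bfC(s)\bfC(s)'$ and expanding $\bfC=\bfC^{\mathrm{diag}}+\bfC^{\mathrm{off}}$, every term other than $\sum_s\bfC^{\mathrm{diag}}(s)\bfC^{\mathrm{diag}}(s)'$ is bounded in spectral norm by $\twonorm{\bfC^{\mathrm{diag}}}\,\twonorm{\bfC^{\mathrm{off}}}$ or $\twonorm{\bfC^{\mathrm{off}}}^2$, so by Weyl's inequality it is enough to show $\twonorm{\bfC^{\mathrm{off}}(s)}=o_\P(a_{np}^2)$ while $\twonorm{\bfC^{\mathrm{diag}}(s)}=O_\P(a_{np}^2)$. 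This is where \ld\ techniques for the \regvary\ variables $Z^2$ (tail index $\alpha/2\in(0,2)$) enter, through asymptotics of the form $\P(D_i>x)\sim n\,\P(Z^2>x)$ in the relevant range of $x$, which simultaneously pin down the fluctuations of the $D_a$ and bound the cross terms. It is precisely here that the centering in \eqref{eq:sample} and the phase transition at $\alpha=2(1+\beta)$ are felt: for $\alpha>2(1+\beta)$ the recentred row sums in \eqref{eq:newD} are indispensable to prevent $\E[\bfC(s)]$ from swamping the signal, whereas for $\alpha<2(1+\beta)$ the uncentred sums already concentrate on their largest summand.

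It then remains to analyse $\sum_s\bfC^{\mathrm{diag}}(s)\bfC^{\mathrm{diag}}(s)'$. Only the finitely many largest $D_a^2$ matter at scale $a_{np}^4$, and with probability tending to one the corresponding rows $a$ are separated by more than the finite width of the filter, so the shifted blocks $\bfK^{(a)}(s_1,s_2)$ have disjoint supports and are asymptotically orthogonal. Weyl's inequality then identifies the top eigenvalues of $\bfP(s_1,s_2)$ with the ordered family $\{D_a^2 v_j^2\}=\{\gamma_i\}$, and a uniform estimate shows that all remaining $\lambda_i(s_1,s_2)$ and $\gamma_i(s_1,s_2)$ are $o_\P(a_{np}^4)$; together these give \eqref{eq:mainn1}. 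For \eqref{eq:mainn2} I would add that, when $\alpha<2(1+\beta)$, each of the largest row sums $D_a=\sum_t Z_{at}^2$ is asymptotically equal to its single largest summand, whence the ordered collections $\{D_a^2 v_j^2\}$ and $\{Z_{(i),np}^4 v_j^2\}$ coincide up to $o_\P(a_{np}^4)$ and $\gamma_i$ may be replaced by $\delta_i$.
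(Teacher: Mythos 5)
Your proposal follows essentially the same route as the paper: the decomposition of $\bfC(s)$ into the "diagonal" part $\sum_a D_a\M^{(a)}(s)$ and the cross terms is exactly the paper's reduction of $\bfC_n(s)$ to the matrix $\bfQ_n(s)$ of sums of squares, and the subsequent steps (truncation of the filter via \eqref{eq:2a}, restriction to the $k$ largest $D_{(i)}$ on a separation event where the shifted blocks have disjoint supports, Weyl's perturbation inequality, and replacement of $D_{(i)}$ by $Z_{(i),np}^2$ when $\alpha<2(1+\beta)$) mirror the paper's Lemmas on truncation and its conclusion. The one genuinely hard ingredient, the large-deviation control of the cross terms at scale $a_{np}^2$, is deferred in both your sketch and the paper to the techniques of the cited iid reference, so the proposals are at the same level of detail there as well.
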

The technical and quite lengthy proof of Theorem \ref{thm:mainn} can be found in Sections~\ref{sec:proof1}-\ref{subsec:Thm6.2}.
 %Section~\ref{subsec:proofthm:mainn}. 
For the convenience of the reader, we first present the main ideas of the proof in the setting of a filter $(h_{kl})$ with finitely many non-zero entries in Section~\ref{subsec:sketchproofthm:mainn}.
\par
The approximations \eqref{eq:mainn1} and \eqref{eq:mainn2} are strikingly simple considering the high dimension of $\bfP(s_1,s_2)$:
apart from multiplication with the deterministic $(v_j^2(s_1,s_2))$, the approximating values in Theorem \ref{thm:mainn} are just the order statistics of the iid sequences $(Z^4_{it})_{i\le p; t\le n}$ and $(D_i^2)_{i\le p}$, respectively.

\begin{example}\label{ex:eigenvalues}{\em
We analyze a specific structure of the coefficients $h_{kl}$ of the linear process \eqref{eq:1} and consider the {\em separable case}, i.e.,
\beao
h_{kl}=\left\{\barr{ll}
d_k\,c_l\,,& k,l \ge 0\,,\\
0& \mbox{otherwise,}\earr
\right.
\eeao
for given real \seq s
$
{\mathbf d} = (d_0, d_1,d_2,\ldots)'$ and ${\mathbf c}= (c_0,c_1,c_2, \ldots)'$,
where we assume $d_0>0$ and that ${\mathbf c}$ is not the null sequence.

First, we determine the values $(\delta_i(s_1,s_2))$ and $(\gamma_i(s_1,s_2))$ which approximate the eigenvalues of the autocovariance function in Theorem~\ref{thm:mainn}.
The matrix $\bfD={\mathbf d}{\mathbf d}'$ is symmetric, has rank one 
and the only non-zero eigenvalue is $\ov d=\sum_{k=0}^\infty d_k^2$. 
We conclude from \eqref{eq:m} that
\beam\label{eq:mex}
\bfM(s)=\ov c (s)\,\bfD  \,, \qquad s\ge 0\,,
\eeam
whose only non-zero eigenvalue is $\ov d\,\ov c (s)=\ov d\,\sum_{l=0}^\infty c_lc_{l+s}$.
The factors $(\ov c (s))$ can be positive or negative; they constitute the autocovariance \fct\ of a stationary linear process $Y_t=\sum_{l=0}^\infty c_l\,V_{t-l}$, $t\in{\mathbb Z}$, where $(V_i)$ is a unit variance white noise process.

From \eqref{eq:Ks1s2} and \eqref{eq:mex} we obtain for $0\le s_1\le s_2 <\infty$ that 
\begin{equation}\label{eq:Kex}
\bfK(s_1,s_2)= \sum_{s=s_1}^{s_2} \ov c ^2(s) \bfD \bfD'\,.
\end{equation}
This matrix has rank $1$ and its largest eigenvalue is given by $v_1^2(s_1,s_2) =\sum_{s=s_1}^{s_2} \ov c ^2(s) \, \, \ov d^2$.
The approximating values in \eqref{eq:mainn1} and \eqref{eq:mainn2} are therefore 
\begin{equation*}
\gamma_i(s_1,s_2)=D_{(i)}^2 \sum_{s=s_1}^{s_2} \ov c ^2(s) \, \, \ov d^2\quad \text{ and } \quad \delta_i(s_1,s_2)=Z_{(i),np}^4 \sum_{s=s_1}^{s_2} \ov c ^2(s) \, \, \ov d^2\,,\quad 1\le i\le p\,.
\end{equation*}
Moreover, we have the remarkable identity 
\begin{equation}\label{eq:ident}
\gamma_i(s_1,s_2)=\sum_{s=s_1}^{s_2}\gamma_i(s,s)
\end{equation}
which implies $\la_i(s_1,s_2)\approx \sum_{s=s_1}^{s_2}\la_i(s,s)$ for large $n$. 
For illustrations of this phenomenon on real and simulated data,
see Figure~\ref{fig:LamYao} and the end of Example~\ref{ex:eigenvectors}, respectively.

\begin{figure}[htb!]
  \centering
  \subfigure[]
{
    \includegraphics[scale=0.4]{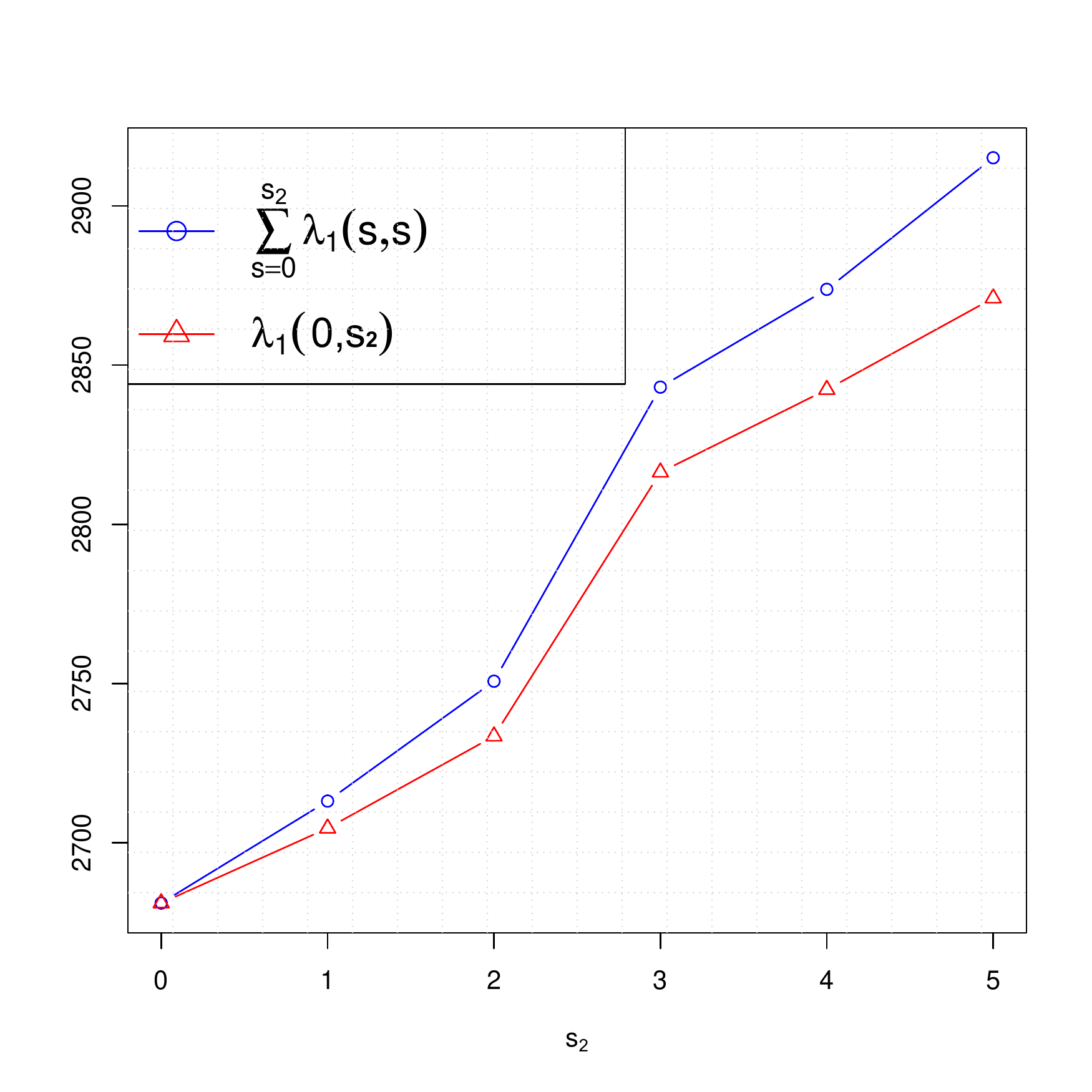}
    \label{fig:LamYao:a1}
  }
  \subfigure[] {
    \includegraphics[scale=0.38]{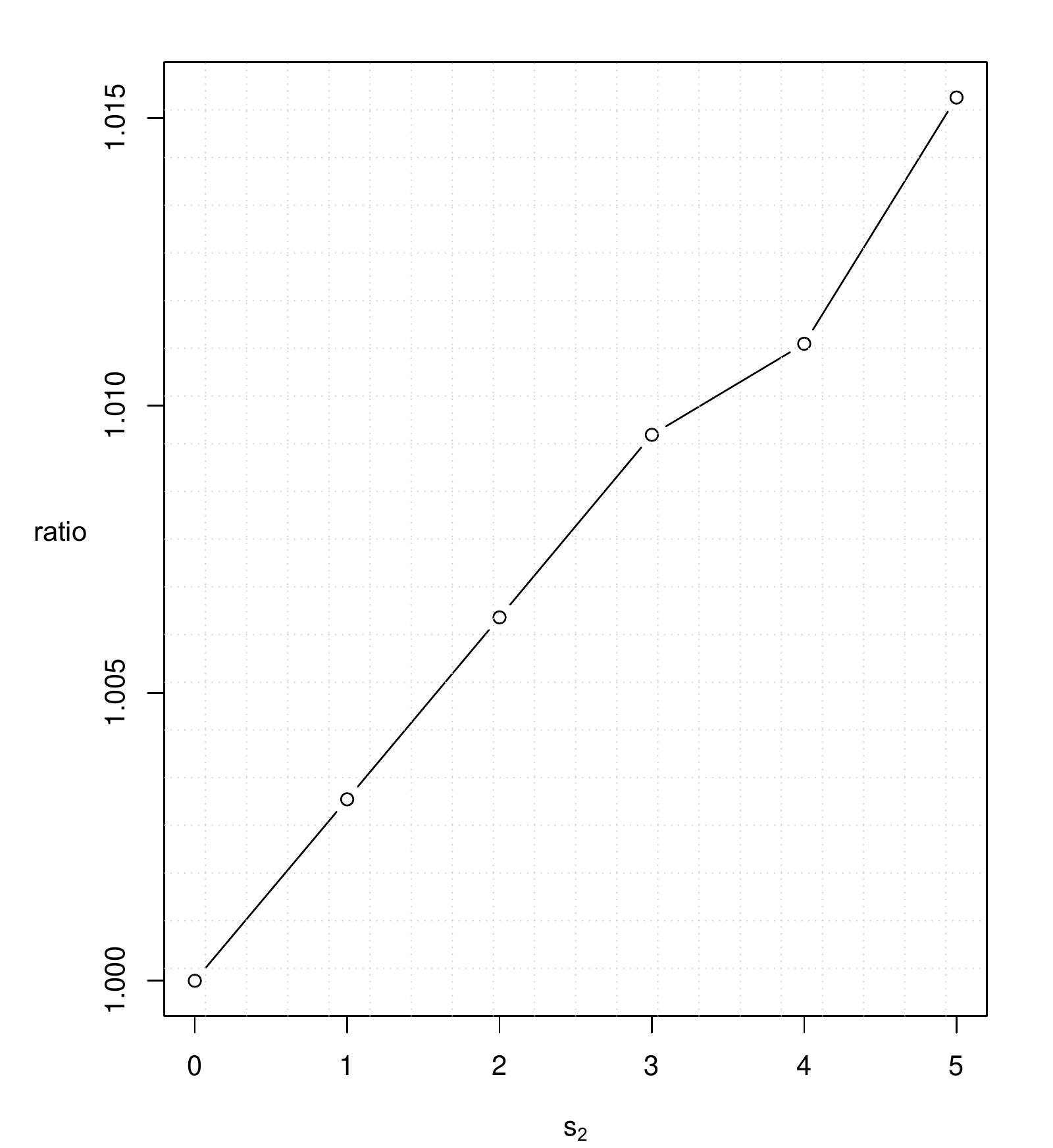}
    \label{fig:LamYao:b1}
  }
  \caption{ A comparison of the sums of the largest eigenvalues  $\lambda_1(s,s)$ of the squared autocovariance matrices and the largest eigenvalue $\lambda_1(0,s_2)$ of the sum of these matrices for $s_2=0,\ldots,5$.
The underlying data $\bfX$ consists of $p=478$  log-return series composing
the S\&P 500 index estimated from $n=1345$ daily observations from 01/04/2010 to 02/28/2015.
The two values are surprisingly close to each other; mind the scale
of the $y$-axis. We also show the corresponding ratios which are very close to one.}
  \label{fig:LamYao}
\end{figure}
}\end{example}

%-----------------------------------------------------------------------
\subsection{Eigenvectors in the linear dependence model}\label{subsec:32}
In this section, $s_1\le s_2$ are two given non-negative integers such that $\bfK(s_1,s_2)$ is not the 
null-matrix. {\em The values $(s_1,s_2)$ are of no particular interest. 
Therefore we drop them in our notation. For example, we write $\bfK$, $\bfP$, $\la_i$ 
instead of  $\bfK(s_1,s_2),\bfP(s_1,s_2),\la_i(s_1,s_2)$.}

We provide approximations of the unit eigenvectors of $\bfP$ and give explicit expressions. 
For simplicity, we assume that $(h_{kl})$ is a matrix with finitely many non-zero entries.
This means that $(X_{it})$ is a finite moving average both through time and across the rows. 

Moreover, to solve identifyability issues of eigenvectors, we require that the eigenspace belonging to each non-zero eigenvalue of the deterministic matrix $\bfK$ is one-dimensional. 
\begin{itemize}
\item {\bf Condition $H_{s_1,s_2}$:} 
(1) There exists an $m\in \N$ such that $h_{kl}=0$ if $|k|\vee |l|>m$.\\
(2) There are no ties among the non-zero eigenvalues of $\bfK$ defined in \eqref{eq:v1}.
\end{itemize}

Let $\y_{i}$ be the unit eigenvector of $\bfP$ associated with the non-zero eigenvalue $\lambda_i$, i.e., 
$\bfP \y_{i} = \lambda_i \, \y_{i}$ and $\ltwonorm{\y_{i}}=1$. Throughout we use the convention that the first non-zero component of eigenvectors is assumed positive.
Recall the definition of $\gamma_{i}$ from \eqref{eq:gammadelta} and define the random indices $a(i), b(i)$ which satisfy the equation
\begin{equation*}
\gamma_{i}= D_{a(i)}^2\,{v}_{b(i)}^2\,.
\end{equation*}

Under condition $H_{s_1,s_2}$, the matrix $\bfK$ is zero outside of a block of size $(2m+1)\times (2m+1)$. More precisely, if we set $\widehat \bfK = (\bfK_{i-m-1,j-m-1})_{i,j=1,\ldots,2m+1}$, then we have 
\begin{equation}\label{eq:Khat}
\bfK =
\begin{pmatrix}
\zero & & \\
& \widehat \bfK &\\
& & \zero
\end{pmatrix}\,,
\end{equation} 
and the non-zero eigenvalues of $\widehat \bfK$ and $\bfK$ coincide. {Here and in what follows, $\zero$ denotes a quadratic matrix consisting of zeros. 
We use this symbol to describe the structure of large matrices
where the dimension of two $\zero$'s in the same line might be distinct or random.}

By $\bfu_i=(\bfu_{i,1}, \ldots, \bfu_{i,2m+1})'$ we denote the unit eigenvector of $\widehat \bfK$ associated with ${v}_i^2>0$, $i=1,\ldots,r={\rm{rank}}(\bfK)\le 2m+1$. Under condition $H_{s_1,s_2}$, the $(\bfu_i)$ are unique.
We embed these $(2m+1)$-dimensional vectors into $p$-dimensional vectors $\bfu_i^{a}=(\bfu_{i,1}^a, \ldots,\bfu_{i,p}^a)'$, $a\in \Z$, via 
\begin{eqnarray}\label{eq:matrixu}
\bfu_{i,j}^a= \left\{\begin{array}{ll}
\bfu_{i,j-a}\,,&j=(a-m) \vee 1,\ldots,(a+m)\wedge p\,,\\
0\,,&\mbox{otherwise\,.}
\end{array}\right.
\end{eqnarray}
The parameter $a$ encodes the location of $\bfu_i$ within $\bfu_i^{a}$. In other words,
\begin{equation}\label{eq:sgdsd}
\bfu_i^a=(0,\ldots,0, \bfu_i',0,\ldots,0)'
\end{equation}
and the location of zeros is determined by $a$. 

\begin{theorem}[Eigenvectors of $\bfP(s_1,s_2)$]\label{thm:eigenvector}
Assume the conditions of Theorem~\ref{thm:mainn} and condition $H_{s_1,s_2}$ with $0\le s_1\le s_2<\infty$. Then for $i\ge 1$,
\begin{equation*}
\ltwonorm{\y_i(s_1,s_2) - \bfu_{b(i)}^{a(i)}(s_1,s_2)} \cip 0\,,\qquad \nto\,.
\end{equation*}
\end{theorem}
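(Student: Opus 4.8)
The plan is to exhibit an explicit matrix $\bfB$ with the same top eigenstructure as the claimed limit, to prove that $\bfP$ is a negligible perturbation of $\bfB$ at the scale $a_{np}^4$, and then to read off the eigenvectors through a quantitative perturbation argument. For a row index $a$ let $\bfK^{(a)}$ denote the $p\times p$ matrix obtained by placing the block $\widehat\bfK$ from \eqref{eq:Khat} in the rows and columns $(a-m)\vee1,\ldots,(a+m)\wedge p$ and zeros elsewhere, in exact analogy with the vector embedding \eqref{eq:matrixu}; for $a$ in the interior $\{m+1,\ldots,p-m\}$ one then has $\bfK^{(a)}\bfu_j^a=v_j^2\,\bfu_j^a$ exactly. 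The candidate approximant is
\begin{equation*}
\bfB=\sum_{a=1}^p D_a^2\,\bfK^{(a)}\,,
\end{equation*}
and the central claim is the operator-norm bound $a_{np}^{-4}\twonorm{\bfP-\bfB}\cip0$ as $\nto$.

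The mechanism behind this approximation, and the main obstacle of the proof, is the large-deviation behaviour of the heavy-tailed field. A single large value $Z_{a,r}$ enters $X_{it}$ only through the coefficient $h_{i-a,t-r}$, hence it is felt only in the rows $i\in\{a-m,\ldots,a+m\}$. Keeping in each entry of $\bfC(s)=\X(0)\X(s)'$ only the contributions in which the \emph{same} large noise variable appears in both factors, one finds, using \eqref{eq:m}, that
\begin{equation*}
\bfC(s)\approx \sum_{a=1}^p D_a\,\M(s)^{(a)}\,,
\end{equation*}
where $\M(s)^{(a)}$ is the embedding of $\M(s)$ centred at $a$; all cross-terms involving two distinct large noise variables, and all contributions coming from typical rows, are negligible in spectral norm at the scale $a_{np}^2$. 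Squaring, summing over $s\in\{s_1,\ldots,s_2\}$, and using that the few dominant rows are, with probability tending to one, pairwise separated by more than $2m$ (so that the embedded blocks have disjoint supports and the mixed products drop out) reproduces $\bfB$. Establishing this bound in \emph{operator} norm---uniformly over all $p$ coordinates, rather than entrywise or only for the ordered eigenvalues---is the delicate point; I would carry it out with the same point-process and \cmt\ scheme that underlies the proof of Theorem~\ref{thm:mainn}, from which the analogous eigenvalue statement \eqref{eq:mainn1} is already available.

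Granting the operator-norm bound, the eigenvectors follow by localisation. Fix $i$ and recall $\gamma_i=D_{a(i)}^2 v_{b(i)}^2$. With probability tending to one the dominant index $a(i)$ lies in the interior and its neighbours $a'\neq a(i)$ with $|a'-a(i)|\le 2m$ are all typical rows, for which $D_{a'}^2=o(a_{np}^4)$; since $\bfK^{(a(i))}\bfu_{b(i)}^{a(i)}=v_{b(i)}^2\bfu_{b(i)}^{a(i)}$ and $\bfK^{(a')}\bfu_{b(i)}^{a(i)}=0$ for the remaining $a'$, this yields
\begin{equation*}
\twonorm{\bfB\,\bfu_{b(i)}^{a(i)}-\gamma_i\,\bfu_{b(i)}^{a(i)}}
=\Big\|\sum_{a'\neq a(i)}D_{a'}^2\,\bfK^{(a')}\bfu_{b(i)}^{a(i)}\Big\|_2=o(a_{np}^4)\,,
\end{equation*}
so that $\bfu_{b(i)}^{a(i)}$ is an approximate eigenvector of $\bfB$, and hence of $\bfP$, with residual $o(a_{np}^4)$. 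The separation needed to upgrade this to an eigenvector statement comes from two sources: condition $H_{s_1,s_2}$(2) forces the within-block eigenvalues $v_1^2>v_2^2>\cdots$ to be distinct, while the a.s.\ distinctness of the points of the limiting cluster Poisson process (cf.\ Theorem~\ref{thm:pp}) forces the normalized dominant values $a_{np}^{-4}D_{(k)}^2$ to have distinct limits. Together they guarantee that $\gamma_i$ is, with probability tending to one, separated from the remaining eigenvalues of $\bfP$ by a gap of order $a_{np}^4$.

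Finally I would combine the residual bound with the spectral gap. A quantitative version of the spectral theorem (equivalently a Davis--Kahan $\sin\Theta$ estimate) turns $\twonorm{\bfP\,\bfu_{b(i)}^{a(i)}-\gamma_i\bfu_{b(i)}^{a(i)}}=o(a_{np}^4)$, together with a gap of order $a_{np}^4$, into $\ltwonorm{\y_i-\bfu_{b(i)}^{a(i)}}\cip0$, the sign ambiguity being removed by the convention that the first non-zero coordinate is positive. The chief difficulty throughout is the operator-norm reduction $a_{np}^{-4}\twonorm{\bfP-\bfB}\cip0$; once it is in place, the block structure of $\bfB$, the interior localisation of the dominant rows, and the gap supplied by $H_{s_1,s_2}$(2) and the point-process limit make the eigenvector identification essentially automatic.
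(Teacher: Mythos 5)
Your proposal is correct and follows essentially the same route as the paper: an operator-norm approximation of $\bfP$ by a block matrix built from $D_a^2$ and embedded copies of $\widehat\bfK$ (the paper truncates to the top $k_p$ order statistics $D_{(1)}^2,\ldots,D_{(k)}^2$ and works on the separation event $A_n$, which makes the approximant exactly block diagonal, rather than keeping all $p$ terms and arguing the typical rows away as you do — a negligible difference), followed by identification of the approximant's eigenvectors, a spectral gap supplied by $H_{s_1,s_2}(2)$ together with the limit theory for the $D_{(i)}^2$, and a quantitative perturbation bound (the paper uses Proposition~\ref{prop:perturbation} of Benaych-Georges and P\'ech\'e where you invoke Davis--Kahan, which is the same estimate). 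The key input you defer to, the operator-norm reduction, is precisely \eqref{eq:appfinite1}, which the paper establishes en route to Theorem~\ref{thm:mainn} and then cites in its proof of this theorem.
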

For $s_1=s_2=0$, Theorem~\ref{thm:eigenvector} identifies the structure of the eigenvectors of 
the sample covariance matrix $\X\X'$.

The proof of Theorem~\ref{thm:eigenvector}, which heavily relies on Section~\ref{subsec:sketchproofthm:mainn}, is presented in Section~\ref{proofthm:eigenvector}. 

\begin{figure}[h!]
  \centering
\subfigure{
    \includegraphics[trim = 1.5in 3.4in 1.5in 3.6in, clip, scale=0.5]{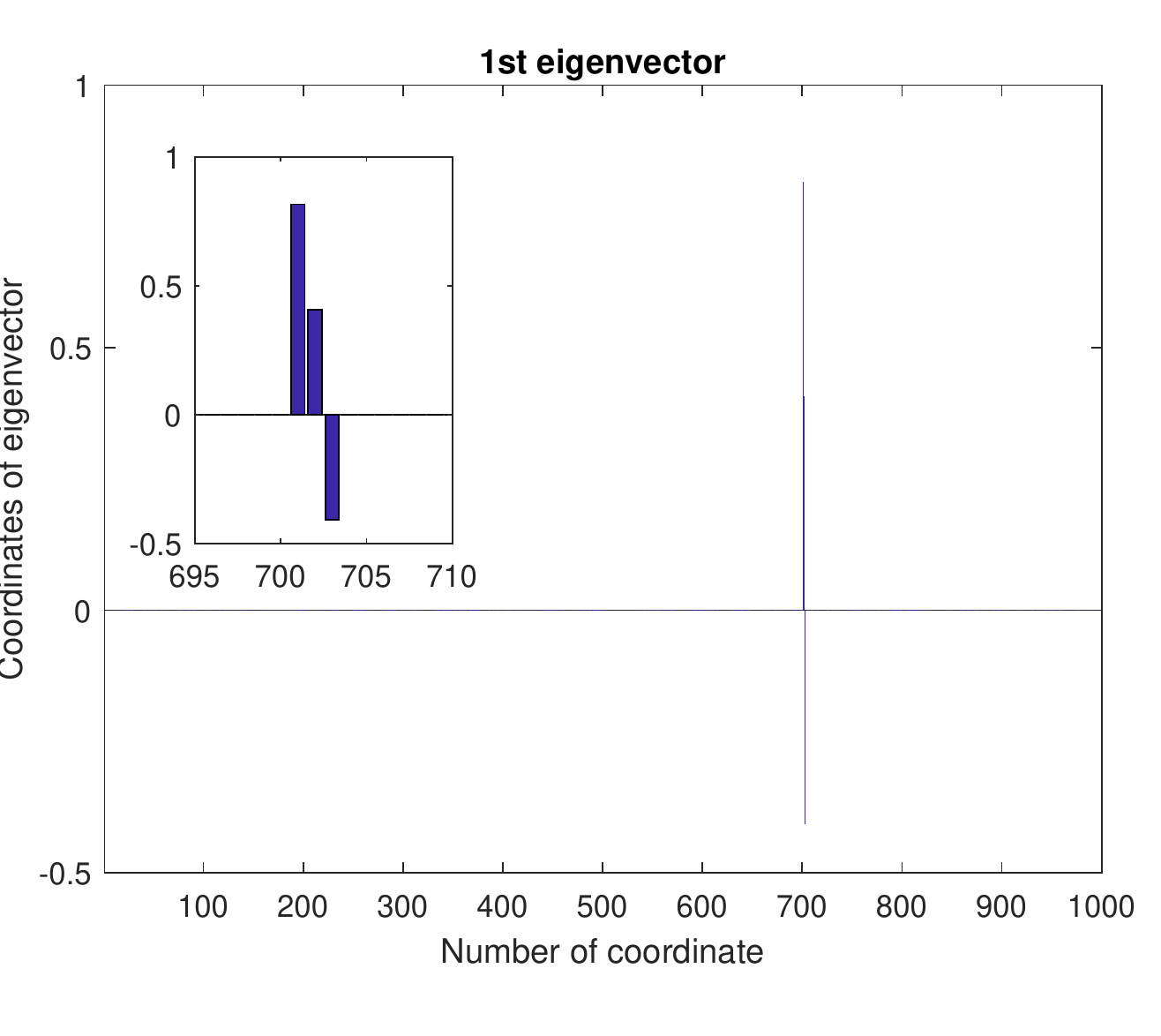}
  }
  \subfigure{
    \includegraphics[trim = 1.5in 3.4in 1.5in 3.6in, clip, scale=0.5]{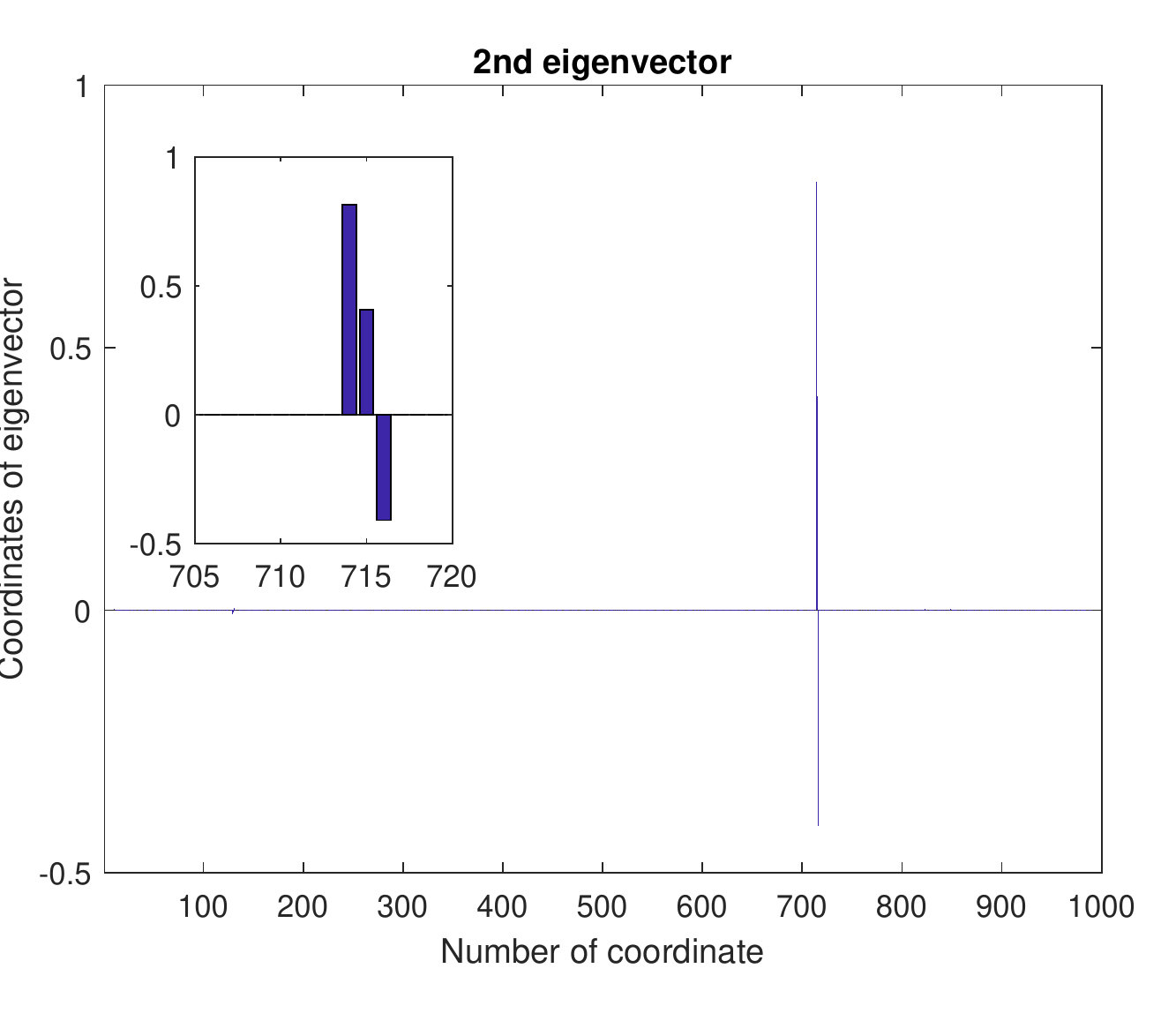}
  }
	\subfigure{
    \includegraphics[trim = 1.5in 3.4in 1.5in 3.6in, clip, scale=0.5]{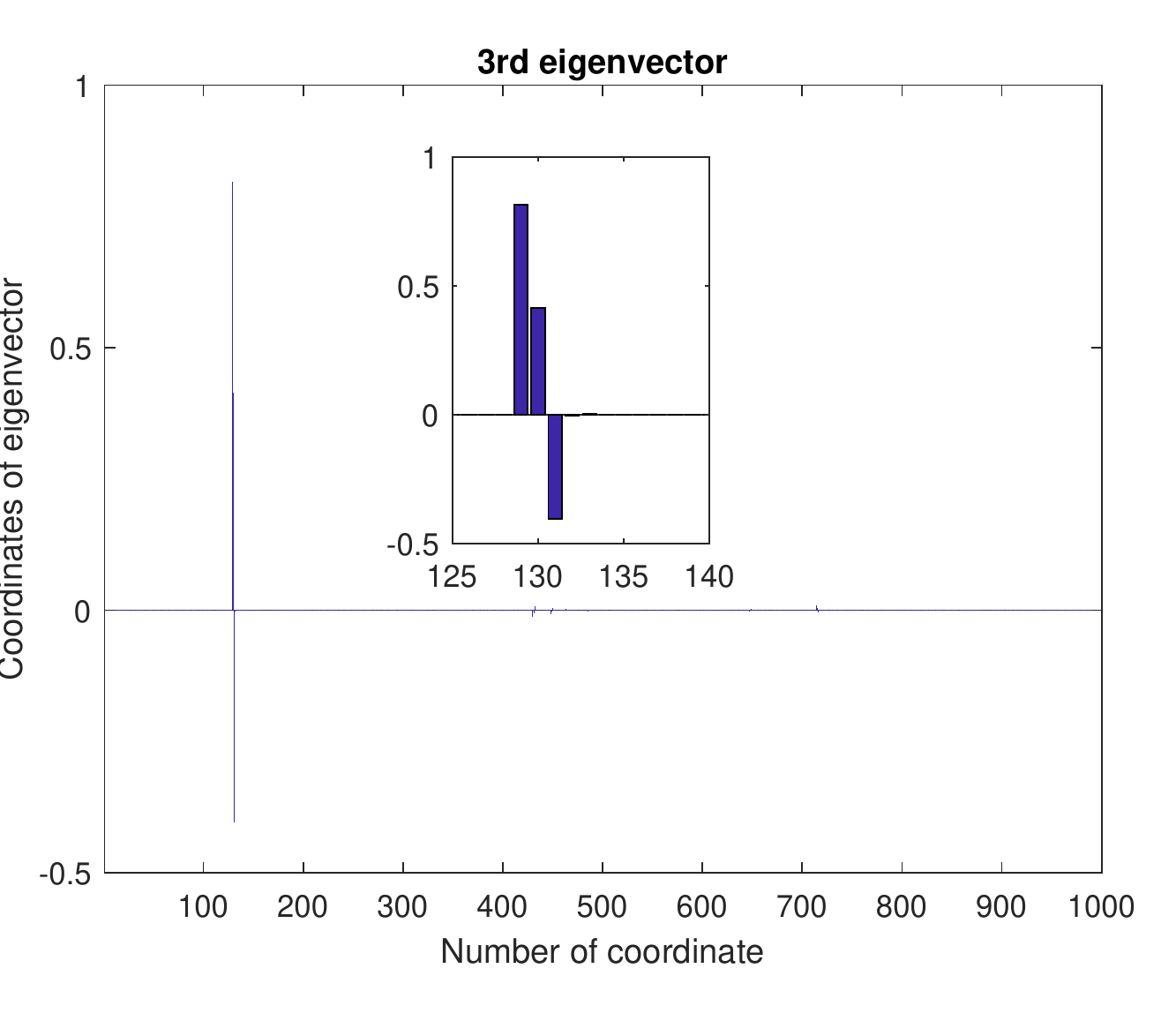}
  }
  \subfigure{
    \includegraphics[trim = 1.5in 3.4in 1.5in 3.6in, clip, scale=0.5]{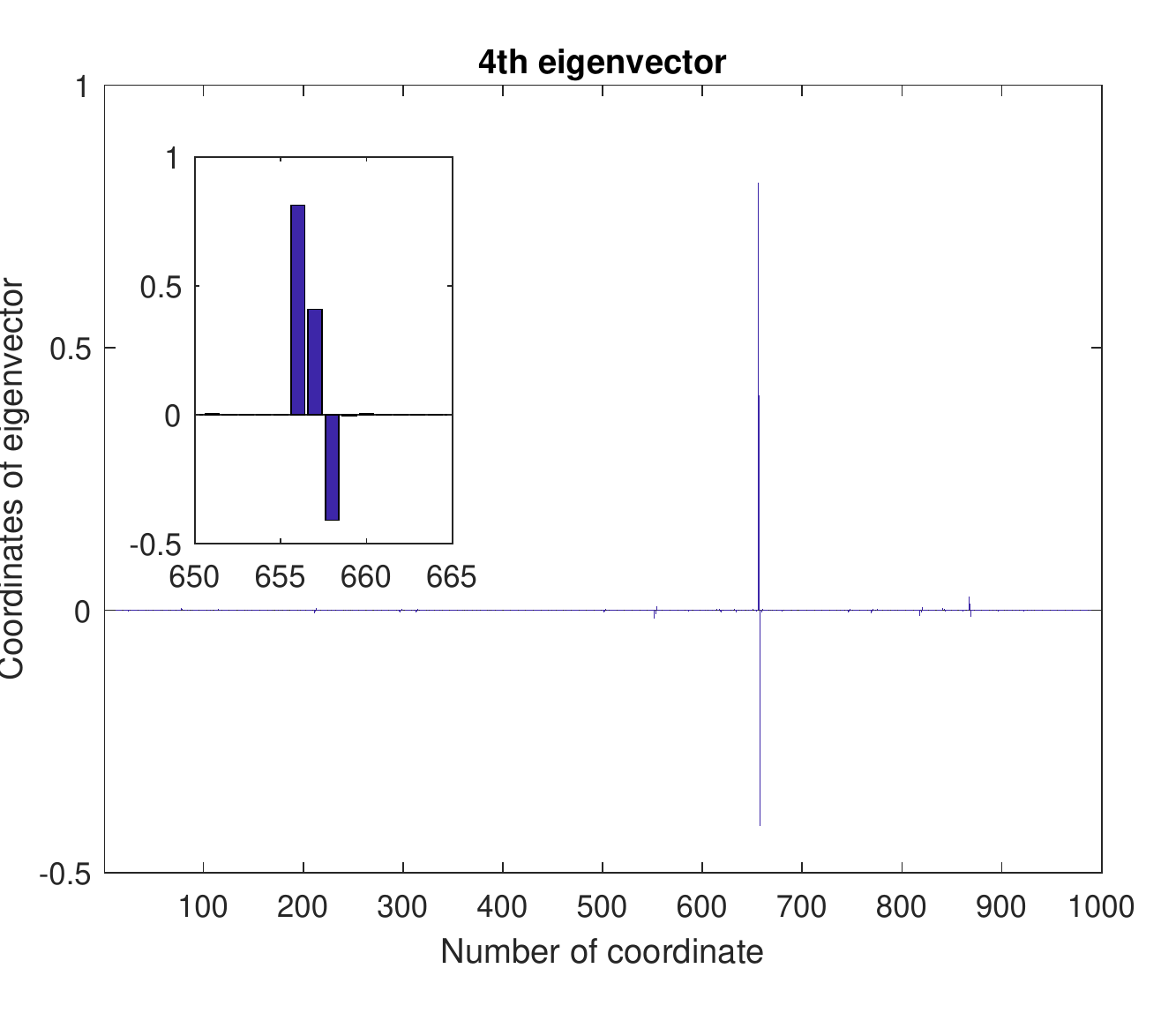}
  }
\caption{Coordinates of the four largest eigenvectors of $\bfP(0,0)$ in the separable case. The eigenvectors predicted by our model are of the form $ 6^{-1/2} \, (2,1,-1)'$. After zooming in, it is easy to see that our approximation is very accurate. The eigenvectors of $\bfP(0,0),\bfP(1,1),\bfP(2,2)$; i.e., for lags 0,1,2; look exactly the same.}
  \label{fig:abc}
\end{figure}

\begin{figure}[h!]
  \centering
\subfigure{
    \includegraphics[trim = 1.5in 3.4in 1.5in 3.6in, clip, scale=0.5]{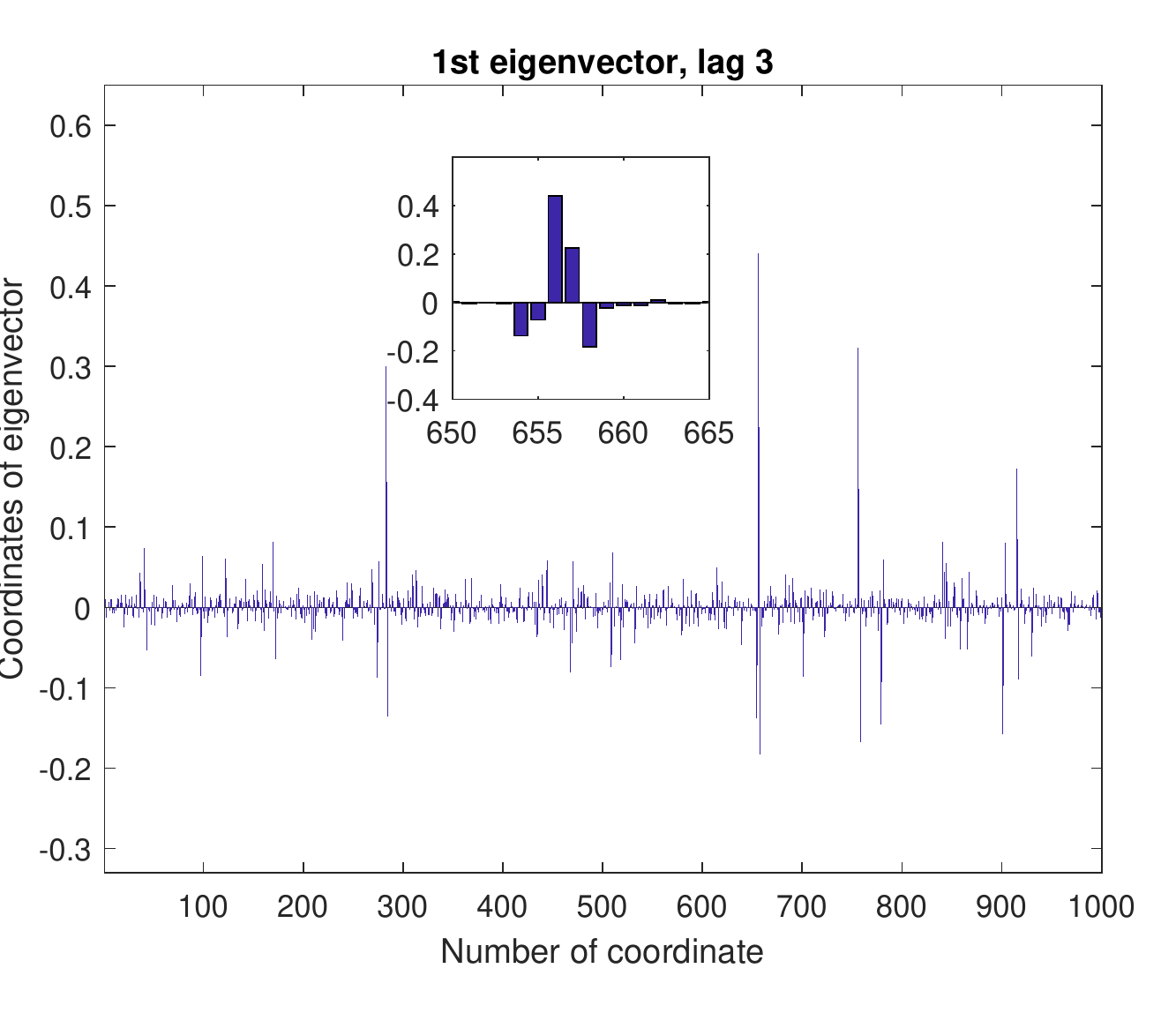}
  }
  \subfigure{
    \includegraphics[trim = 1.5in 3.4in 1.5in 3.6in, clip, scale=0.5]{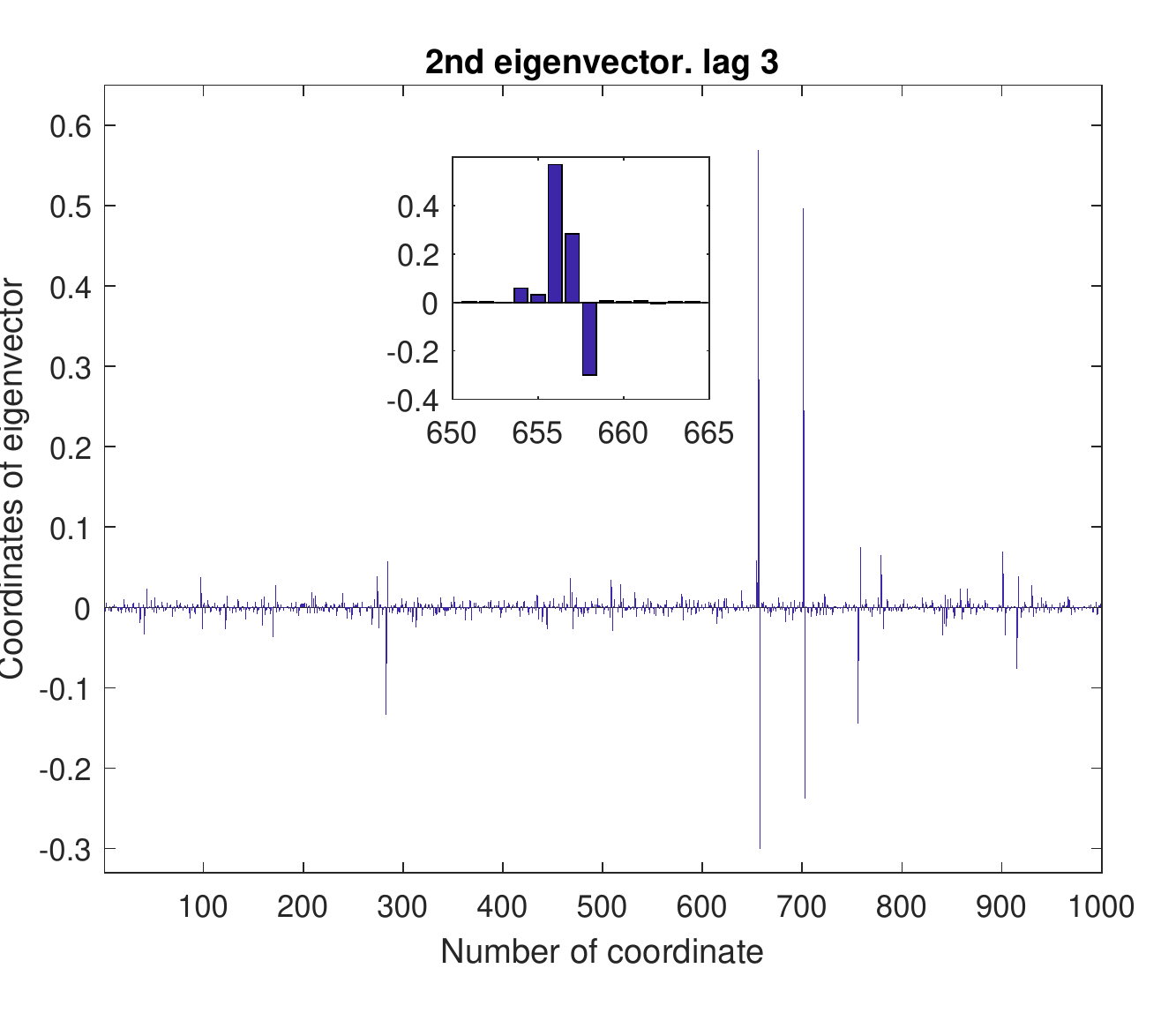}
  }
\caption{Coordinates of the leading eigenvectors of $\bfP(3,3)$ for underlying $\bfK(3,3)$-matrix being zero. The eigenvectors are very different from those of $\bfP(0,0),\bfP(1,1),\bfP(2,2)$; i.e., for lags 0,1,2.}
  \label{fig:abc1}
\end{figure}

\begin{example}\label{ex:eigenvectors}{\em 
We consider the separable case $h_{kl}=d_kc_l$ and re-use the setting of Example~\ref{ex:eigenvalues}. In addition, we assume that $d_k=c_k=0$ for $k>m$. 

Note that it is sufficient to focus on the non-zero elements of $\widehat \bfK$ in \eqref{eq:Khat}. By \eqref{eq:Kex} and symmetry of $\bfD=(d_id_j)$, we obtain $\widehat \bfK(s_1,s_2)= \sum_{s=s_1}^{s_2} \ov c ^2(s) \bfD_m^2$, where $\bfD_m=(d_i d_j)_{0\le i,j\le m}$. 
One easily checks that the $(m+1)$-dimensional unit eigenvector of $\bfD_m$ associated with its only non-zero eigenvalue $\ov d$ is 
\beam\label{eq:u1}
\bfu_1=\ov d^{-1/2} (d_0,\ldots,d_{m})'\,.
\eeam
By assumption $d_0>0$, this vector is oriented in accordance with our convention.
Now we verify that $\bfu_1$ is also the eigenvector associated with the only non-zero eigenvalue $v_1^2(s_1,s_2)$ of $\widehat \bfK(s_1,s_2)$:
\begin{equation*}
\sum_{s=s_1}^{s_2} \ov c ^2(s) \bfD_m^2\, \bfu_1= \sum_{s=s_1}^{s_2} \ov c ^2(s) \ov d \,\bfD_m\, \bfu_1= v_1^2(s_1,s_2)\,\bfu_1\,.
\end{equation*}

By Theorem~\ref{thm:eigenvector}, for fixed $i$, the eigenvector $\y_i(s_1,s_2)$ is approximated by the $p$-dimensional vector that coincides with $\bfu_{1}$ at the $L_i$th to $(L_i+m)$th coordinates 
and has zero entries otherwise, i.e.,
\begin{equation}\label{eq:eigvec}
\bfu_{b(i)}^{a(i)}(s_1,s_2)=\bfu_{1}^{L_i}(s_1,s_2)
=(\underbrace{0,\ldots,0}_{L_i-1}, \bfu_1',0,\ldots,0)'\,.
\end{equation}

We illustrate the approximation of the leading eigenvectors by \eqref{eq:eigvec} in Figure~\ref{fig:abc}. We set $(d_0,d_1,d_2)=(2,1,-1)$ and $c_0=c_1=c_2=1$; all others are $0$. The iid field $(Z_{it})$ is simulated from a $t_{1.5}$ distribution. Then we construct the sample autocovariance matrices $\X_n(s)$, $s\ge 0$, for dimension $p=1000$ and sample size $n=10000$. From \eqref{eq:u1} one has 
\beam\label{eq:u11}
\bfu_1=\dfrac{1}{\sqrt{6}}\, (2,1,-1)'\,.
\eeam
In Figure~\ref{fig:abc}, we plot the 1000 coordinates of the $\ell_2$-normalized eigenvectors associated with the four largest eigenvalues of $\bfP(0,0)$. It is easy to see that these eigenvectors are of the form \eqref{eq:eigvec}. Indeed, their coordinates are zero everywhere except some region which is determined by the location of the 4 largest values in the iid sequence $D_1,\ldots, D_p$, that is $L_1, \ldots,L_4$. The eigenvectors have a spike at $L_i$ of the form $\bfu_1$ which becomes apparent after zooming in around the $L_i$. In this example, the spikes appear at 3 coordinates since $\bfu_1$ is 3-dimensional. By choosing an appropriate sequence $(d_0,\ldots,d_m)$, it is possible to generate arbitrary spikes. 
The eigenvectors of $\bfP(1,1),\bfP(2,2),\bfP(0,2)$; i.e., for lags 1,2 and the sum of lags 0 to 2; look exactly the same as those of $\bfP(0,0)$. This phenomenon is due to independence of the approximate eigenvectors from $(s_1,s_2)$ in \eqref{eq:eigvec}.

For the same data set we show, in Figure~\ref{fig:abc1}, the leading two eigenvectors of $\bfP(3,3)$, the squared sample autocovariance matrix at lag 3. Since $\bfK(3,3)$ is the null-matrix, the assumptions of Theorem~\ref{thm:eigenvector} are violated. We observe that the structure of the plotted eigenvectors is very different from those in Figure~\ref{fig:abc} which correspond to 
non-null $\bfK$-matrices. Recall that the $\bfK$-matrix contains information about the largest entries of the sample autocovariance matrix. More precisely, it describes the entries with the heaviest tail index $\alpha/2$ which dominate the spectral behavior. If $\bfK$ is the null-matrix, it means that all autocovariance entries have the same tail index $\alpha$. Therefore the mass of the eigenvectors is more spread out what we observe in Figure~\ref{fig:abc1}. Moreover, it is interesting to note that, when zooming in around the largest coordinates of the eigenvectors, we see the somewhat familiar pattern of $\bfu_1$, though not as dominant as in Figure~\ref{fig:abc}. 

Next, we present some consequences of identity \eqref{eq:ident} for the sums of eigenvalues and the eigenvalues of sums of matrices. The following table contains the ratios of the largest eigenvalues of $\bfP(s,s)$ and $\bfP(0,0)$ for lags $s=1,\ldots, 5$.  

%\begin{table}
  \begin{center}
    \begin{tabular}{l|c c c c c} % <-- Alignments: 1st column left, 2nd middle and 3rd right, with vertical lines in between
    lag $s$   & 1 & 2 & 3 & 4 & 5\\
      \hline
    $\frac{\la_1(s,s)}{\la_1(0,0)}$ & \, 0.4444 \,& \, 0.1111 \, & \, $3.31 \cdot 10^{-6}$ \, & \, $4.05 \cdot 10^{-6}$ \, & \, $7.43 \cdot 10^{-6}$  \\
    \end{tabular}
  \end{center}
	%\label{table:1}
%\end{table}
The eigenvalue at lag 0 is of highest magnitude which can be explained by the inequality $|\ov c (s)|\le \ov c (0)$. Our approximations for $\la_1(s,s)$, $s\ge 0$ in \eqref{eq:mainn1} are $\gamma_1(s,s)=D_{(1)}^2 \ov c (s)^2 \ov d^2$. In this example we have $\ov c (0)=3, \ov c (1)=2, \ov c (2)=1, \ov c (3)=0$. Hence,
\begin{equation*}
\frac{\la_1(s,s)}{\la_1(0,0)} \approx \frac{\gamma_1(s,s)}{\gamma_1(0,0)} = \frac{\ov c (s)^2}{\ov c (0)^2}\,,
\end{equation*}
which provides a theoretical explanation for the values in the table. From lag 3 onwards, all the entries of the sample autocovariance matrix have lighter tails than some entries of the sample autocovariance matrices with smaller lags.

Finally, we calculate 
\begin{equation*}
\frac{\la_1(0,1)}{\la_1(0,0)}=1.4444 \quad \text{ and } \quad \frac{\la_1(0,2)}{\la_1(0,0)}=1.5555\,.
\end{equation*}
In words, the sum of the largest eigenvalues of $\bfP(0,0)$ and $\bfP(1,1)$ equals the largest eigenvalue of $(\bfP(0,0)+\bfP(1,1))$.
}\end{example}

In Example~\ref{ex:eigenvectors}, all non-null $\bfK(s_1,s_2)$-matrices had rank 1 and the same eigenvector $\bfu_1$ associated with $v_1^2(s_1,s_2)$. If the coefficients $(h_{kl})$ of the linear process \eqref{eq:1}
are such that $\bfK(s_1,s_2)$ has a rank higher than 1, and the eigenvectors depend on the lags $(s_1,s_2)$, one obtains a much richer structure of eigenvectors.

\begin{example}\label{ex:eigenvectors1}{\em 
\begin{figure}[htb!]
  \centering
    \includegraphics[trim = 0.15in 0.9in 0.35in 0.5in, clip, scale=0.75]{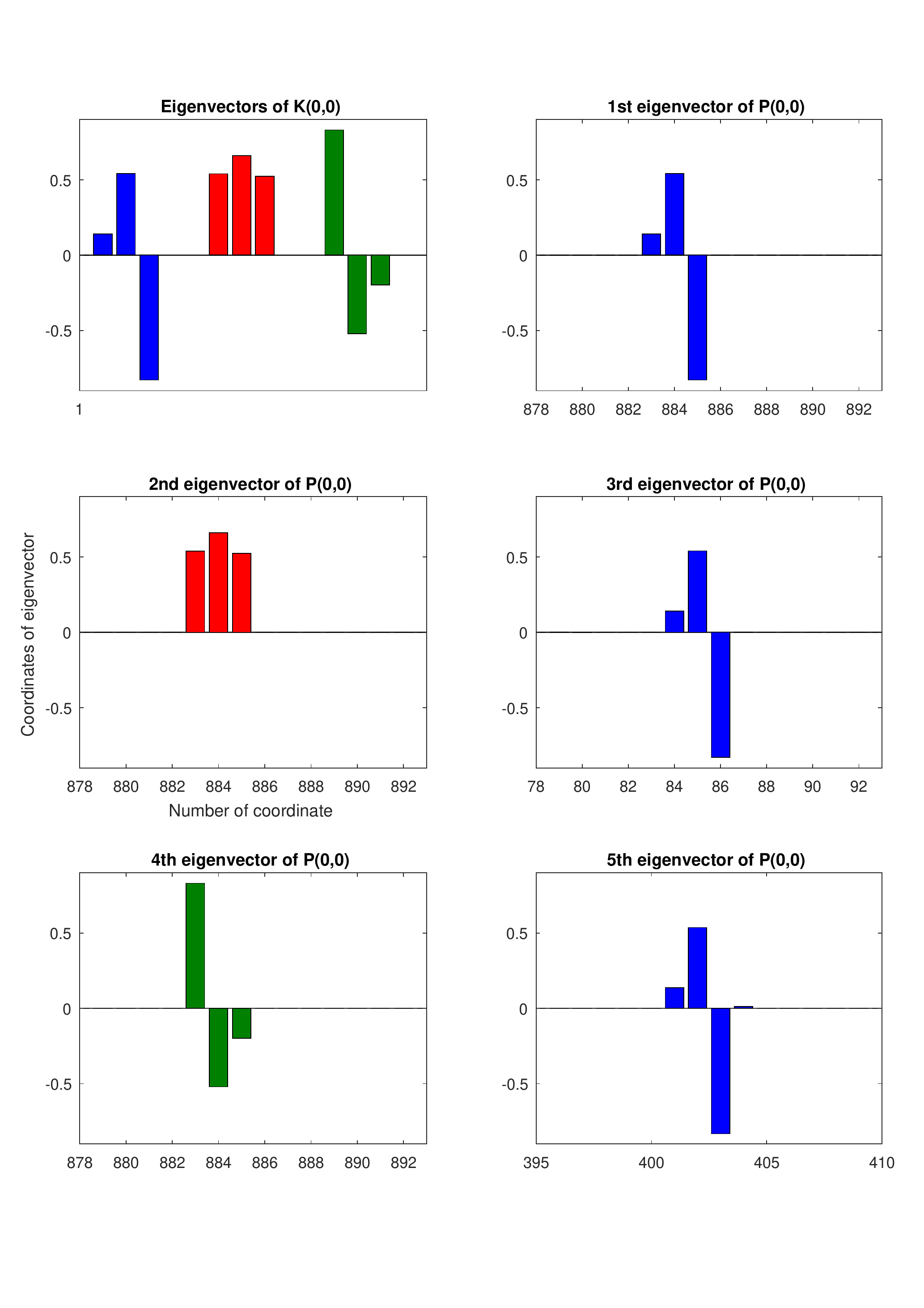}
\caption{Eigenvectors of $\bfP(0,0)$ for a $\bfK$-matrix with rank 3.}
\label{fig:00}
\end{figure}
We study the autocovariance matrices of the linear process \eqref{eq:1}, where $(Z_{it})$ is a field of independent identically $t$-distributed random variables with $1.5$ degrees of freedom.
The coefficients of the linear process are given by
\begin{equation}\label{eq:33}
\begin{pmatrix}
h_{00} & h_{01} & h_{02}\\
h_{10} & h_{11} & h_{12}\\
h_{20} & h_{21} & h_{22}
\end{pmatrix}
=
\begin{pmatrix}
1 & 2 & 0\\
4 & 1 & -1\\
-3 & 0 & 5
\end{pmatrix}\,.
\end{equation}
We work with simulated data $(X_{it})$ for dimension $p=1000$ and sample size $n=10000$. Our goal is to examine the quality of the asymptotic approximation of the $p$-dimensional eigenvectors of $\bfP(0,0)$ and $\bfP(1,1)$ provided by Theorem \ref{thm:eigenvector}.

Since $\bfH$ is zero outside a $3\times 3$ block, the $\bfK$-matrices inherit this property by definition \eqref{eq:Ks1s2} and we will interpret them as $3\times 3$ matrices for simplicity.   

We start with $\bfP(0,0)$. The eigenvalues of the deterministic $\bfK(0,0)$ are
\begin{equation*}
v_1^2(0,0)=2080.1\,,\quad v_2^2(0,0)=89.1 \,,\quad v_3^2(0,0)=3.8\, 
\end{equation*}
with associated (normalized) eigenvectors
\begin{equation*}
\bfu_1(0,0)=
\begin{pmatrix}
0.1412\\
0.5411\\
-0.8290
\end{pmatrix}
\,,\quad
\bfu_2(0,0)=
\begin{pmatrix}
0.5392\\
0.6602\\
0.5228
\end{pmatrix}
\,,\quad
\bfu_3(0,0)=
\begin{pmatrix}
0.8303\\
-0.5208\\
-0.1986
\end{pmatrix}\,,
\end{equation*}
respectively. Recall that all vectors have Euclidean norm 1. The coordinates of $\bfu_1(0,0)$, $\bfu_2(0,0)$, $\bfu_3(0,0)$ are plotted in the top left panel of Figure~\ref{fig:00} in blue, red and green, respectively. By Theorem~\ref{thm:eigenvector} and \eqref{eq:sgdsd}, the eigenvectors of $\bfP(0,0)$ should resemble the appropriately shifted versions of $\bfu_1(0,0), \bfu_2(0,0), \bfu_3(0,0)$. Therefore we compute the eigenvectors of $\bfP(0,0)$ and try to match them with either a blue, red or green pattern from the top left panel. 

The result for the eigenvectors associated with the 5 largest eigenvalues of $\bfP(0,0)$ is presented in Figure~\ref{fig:00}. The top right panel, for instance, shows the coordinates 878 to 893 of the first eigenvector. We zoomed in on the interesting region 878-893 since all other coordinates are very close to zero; compare also with Figure~\ref{fig:abc} where all coordinates and a zoom-in version are plotted. One immediately notices the pattern of $\bfu_1(0,0)$ at location $L_1=883$; see \eqref{eq:help6} for the definition of $L_i$.  The color blue is chosen to emphasize the resemblance of the first eigenvector of $\bfP(0,0)$ to $\bfu_1(0,0)$.

In the second and third rows of Figure~\ref{fig:abc}, we see that the blue, red and green patterns from the top left panel can be easily detected in the eigenvectors of $\bfP(0,0)$.  Since $\bfK(0,0)$ has rank 3, we observe all 3 patterns. 

The zoom-in location is determined by $(L_i)$. In this example, the possible zoom-in locations are $L_1,\ldots, L_5$. Moreover, it is possible to determine the $L_i$'s, which are defined in terms of order statistics of the iid noise, by looking at the eigenvector plots. From Figure~\ref{fig:00} we can deduce the following: the pattern in the first eigenvector is always located at $L_1$ and therefore $L_1=883$. More generally, the largest $L_i$'s can be found by plotting the first, second, third,\ldots eigenvectors of $\bfP(0,0)$. The $k$th appearance of the $\bfu_1(0,0)$ pattern corresponds to $L_k$. An inspection of the second column of Figure~\ref{fig:00} gives $L_1=883, L_2=84$ and $L_3=401$; see also Figure~\ref{fig:11}. 
\par
\begin{figure}[htb!]
  \centering
    \includegraphics[trim = 0.15in 0.9in 0.35in 0.5in, clip, scale=0.75]{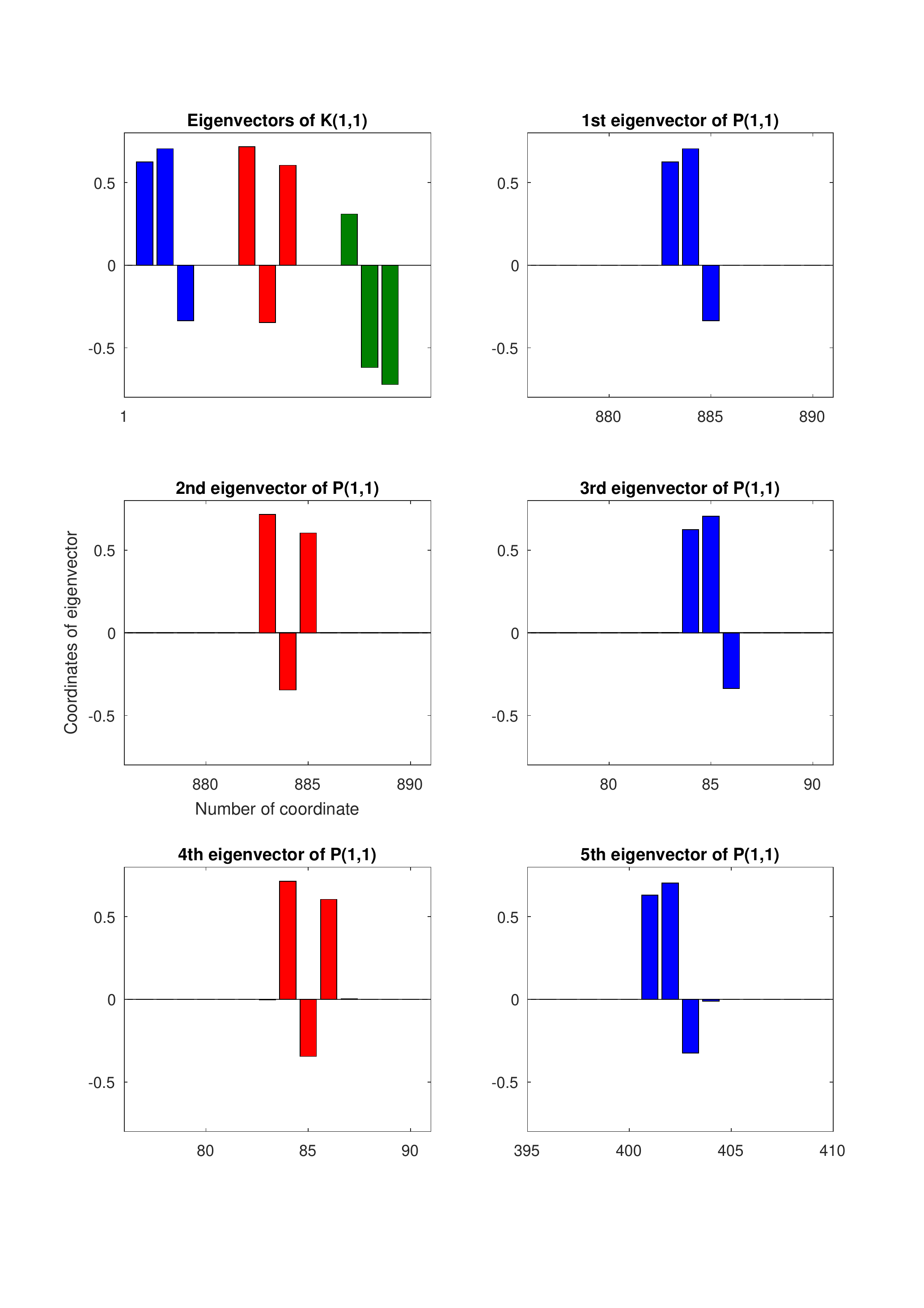}
\caption{Eigenvectors of $\bfP(1,1)$ for a $\bfK$-matrix with rank 2.}
\label{fig:11}
\end{figure}

In Figure~\ref{fig:11} we show the leading eigenvectors of $\bfP(1,1)$. The eigenvalues of the deterministic $\bfK(1,1)$ are $v_1^2(1,1)=181.00, v_2^2(1,1)=66.99$ and $v_3^2(1,1)=0$
with associated (normalized) eigenvectors
\begin{equation*}
\bfu_1(1,1)=
\begin{pmatrix}
0.6242\\
0.7050\\
-0.3368
\end{pmatrix}
\,,\quad
\bfu_2(1,1)=
\begin{pmatrix}
0.7174\\
-0.3465\\
0.6044
\end{pmatrix}
\,,\quad
\bfu_3(1,1)=
\begin{pmatrix}
0.3094\\
-0.6189\\
-0.7220
\end{pmatrix}\,,
\end{equation*}
respectively. The coordinates of $\bfu_1(1,1), \bfu_2(1,1), \bfu_3(1,1)$ are plotted in the top left panel of Figure~\ref{fig:11} in blue, red and green, respectively. 
In contrast to $\bfK(0,0)$, the matrix $\bfK(1,1)$ does not have full rank. As a consequence the green pattern corresponding to $\bfu_3$ does not appear within the eigenvectors of $\bfP(1,1)$ (see Figure~\ref{fig:11}) and we only need to look for the the blue and red patterns. Indeed, in Theorem~\ref{thm:eigenvector} only the eigenvectors $\bfu_i$ associated with positive eigenvalues of $\bfK$ are considered.  

Next, we show the connection between the eigenvector plots and Theorem~\ref{thm:mainn}.
To this end, recall that $\gamma_i(1,1)$ in \eqref{eq:mainn1} is the $i$th largest value in the set $\{ v_j^2(1,1) D_{(\ell)}^2:j,\ell \ge 1\}$. The approximate eigenvectors in Theorem~\ref{thm:eigenvector} are $\bfu_{b(i)(1,1)}^{a(i)(1,1)}(1,1)$, where $a(i)(1,1), b(i)(1,1)$ satisfy the equation $\gamma_{i}(1,1)= D_{a(i)(1,1)}^2\,{v}_{b(i)(1,1)}^2(1,1)$. The $b(i)$'s essentially decide which pattern will be observed in the sample autocovariance eigenvectors. In this example, we have $b(i)(1,1)\in \{1,2\}$. If $b(i)(1,1)=j$, we will see the $\bfu_j(1,1)$ pattern within the coordinates of the $i$th largest eigenvector of $\bfP(1,1)$. Consequently, the $b(i)$'s can be obtained from Figure~\ref{fig:11}. We immediately find
\begin{equation*}
b(1)(1,1)=1, \quad b(2)(1,1)=2, \quad
b(3)(1,1)=1, \quad
b(4)(1,1)=2, \quad
b(5)(1,1)=1\,.
\end{equation*}
Similarly, from Figure~\ref{fig:00} we have
\begin{equation*}
b(1)(0,0)=1, \quad b(2)(0,0)=2, \quad
b(3)(0,0)=1, \quad
b(4)(0,0)=3, \quad
b(5)(0,0)=1\,.
\end{equation*}
}\end{example}
To summarize, the eigenvector plots contain a lot of information about the model. 
The location of the spikes provides insight into the structure of the iid noise, while the spikes themselves can be viewed as functions of the $(h_{kl})$. More precisely, the vectors $\bfu_i(s_1,s_2)$ are functions of the coefficients $(h_{kl})$. If the number of non-zero coefficients is small, the $\bfu_i(s_1,s_2)$ can be estimated from the eigenvectors of $\bfP(s_1,s_2)$. Doing so for various pairs $(s_1,s_2)$, it is possible to invert the  functional relation between $\bfu_i(s_1,s_2)$ and the coefficients. Hence, one can estimate the coefficients $(h_{kl})$ of the linear process.

While Example~\ref{ex:eigenvectors1} treated the case of spikes of length 3, one easily obtains spikes of length $m$ by replacing \eqref{eq:33} by a matrix with $m$ rows. The computational effort would stay the same provided $m$ is small relative to the dimension $p$. This setting is quite natural for factor models.

%--------------------------------------------------------------
\subsection{Point process \con }\label{sec:pp}
Theorem~\ref{thm:mainn}  and arguments similar to the proofs in \cite{davis:heiny:mikosch:xie:2016}
enable one to derive the weak \con\ of
the point processes of the normalized eigenvalues of $\bfP(s_1,s_2)$:
\begin{equation}\label{eq:ppdef}
N_n^{(s_1,s_2)}=\sum_{i=1}^p \delta_{a_{np}^{-4}\lambda_{i}(s_1,s_2)}\,,
\end{equation}
where $\delta_x$ denotes the Dirac measure at $x$.
 
\begin{theorem}\label{thm:pp}
Assume the conditions of Theorem~\ref{thm:mainn}.
Then $(N_n^{(s_1,s_2)})$ converge weakly in the space of point measures
with state space $(0,\infty)$ equipped with the vague topology:
\begin{equation}\label{eq:pp}
N_n^{(s_1,s_2)} \cid  N^{(s_1,s_2)}= \sum_{i=1}^\infty
\sum_{j=1}^{\infty} \delta_{\Gamma_i^{-4/\alpha} v_j^2(s_1,s_2)}\,, \qquad \nto.
\end{equation}
Here
\beao
\Gamma_i=E_1+\cdots + E_i\,,\qquad i\ge 1\,,
\eeao
and $(E_i)$ is an iid standard exponential \seq .
\end{theorem}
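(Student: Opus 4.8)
The plan is to use the approximation \eqref{eq:mainn1} -- which holds under the stated hypotheses in both cases $\alpha<2(1+\beta)$ and $\alpha>2(1+\beta)$ -- to replace the intractable eigenvalues $\la_i$ by the explicit quantities $\gamma_i$, and then to recognise the resulting process as a deterministic \emph{cluster transform} of the point process of the heavy-tailed sums $D_i^2$. First I would reduce to the approximating process. Set $\tilde N_n=\sum_{i=1}^p \delta_{a_{np}^{-4}\gamma_i(s_1,s_2)}$. By \eqref{eq:mainn1}, $a_{np}^{-4}\max_{i\le p}|\la_i-\gamma_i|\cip 0$. For a Lipschitz test function $f\ge 0$ on $(0,\infty)$ with support in some $[\vep,M]$, only the finitely many $\gamma_i$ near $[\vep,M]$ contribute to $\tilde N_n(f)$, and the uniform continuity of $f$ together with the uniform closeness of the points gives $|N_n^{(s_1,s_2)}(f)-\tilde N_n(f)|\cip 0$. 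Hence $N_n^{(s_1,s_2)}$ and $\tilde N_n$ share the same weak limit, so it suffices to prove $\tilde N_n\cid N^{(s_1,s_2)}$. Since only $O_{\P}(1)$ of the products $D_i^2 v_j^2$ exceed any level $\vep a_{np}^4$, on each $[\vep,\infty)$ the process $\tilde N_n$ agrees for large $n$ with the full product process $\sum_{i=1}^p\sum_{j\ge 1}\delta_{a_{np}^{-4}D_i^2 v_j^2(s_1,s_2)}$, and I may work with the latter.

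Second, I would establish the convergence of the cluster centres
\beao
M_n=\sum_{i=1}^p \delta_{a_{np}^{-4}D_i^2}\cid \Pi=\sum_{i=1}^\infty \delta_{\Gamma_i^{-4/\alpha}}\,.
\eeao
The variables $D_1,\dots,D_p$ are iid across rows, and the decisive input is the single-big-jump asymptotics for the heavy-tailed sum $D$: in both regimes $\P(D^2>y)\sim n\,\P(Z^2>\sqrt y)=n\,\P(|Z|>y^{1/4})$ as $y\to\infty$, the deterministic centering in the case $\alpha>2(1+\beta)$ affecting only lower-order terms (the light left tail of $D$ is negligible against its heavy right tail). With the normalisation $a_{np}$ this yields $p\,\P(D^2>x\,a_{np}^4)\to x^{-\alpha/4}$, so the classical point-process convergence for iid \regvary\ variables gives $M_n\cid \mathrm{PRM}(\mu)$ with $\mu((x,\infty))=x^{-\alpha/4}$, and $\mathrm{PRM}(\mu)$ admits exactly the representation $\sum_i\delta_{\Gamma_i^{-4/\alpha}}$.

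Third, I would obtain $N^{(s_1,s_2)}$ from $\Pi$ through the deterministic cluster map $T\colon\sum_k\delta_{x_k}\mapsto\sum_k\sum_{j\ge 1}\delta_{x_k v_j^2(s_1,s_2)}$ by a continuous-mapping plus truncation argument. For fixed $J$ the truncated map $T_J$ (keeping only $j\le J$) is a.s.\ continuous with respect to $\Pi$, since the limit puts no mass on configurations having a point on the boundary $\{\vep\}$; hence $T_J(M_n)\cid T_J(\Pi)$. As $J\to\infty$ one has $T_J(\Pi)\to N^{(s_1,s_2)}$, while the tail clusters are negligible: because $a_{np}^{-4}D_{(1)}^2$ is tight (its limit is $\Gamma_1^{-4/\alpha}$) and $v_{J+1}^2(s_1,s_2)=\max_{j>J}v_j^2(s_1,s_2)\to 0$ by the summability \eqref{eq:tracea},
\beao
\P\Big(\exists\, i\le p,\ j>J:\ a_{np}^{-4}D_i^2 v_j^2(s_1,s_2)>\vep\Big)\le \P\big(a_{np}^{-4}D_{(1)}^2>\vep/v_{J+1}^2(s_1,s_2)\big)\to 0
\eeao
as $J\to\infty$, uniformly in large $n$; here $D_{(1)}^2$ is as in \eqref{eq:help6}. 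A converging-together argument then gives $\tilde N_n\cid N^{(s_1,s_2)}$, and combined with the first step this proves the theorem.

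The main obstacle is the clustering step: the map $T$ is not globally continuous on the space of point measures, because each centre generates an \emph{infinite} cluster, so the argument must be routed through the finite truncations $T_J$ and closed by the uniform tail estimate above, which crucially exploits $\sum_j v_j^2(s_1,s_2)<\infty$ from \eqref{eq:tracea} and the tightness of the largest normalised centre. The single-big-jump tail asymptotics for $D$, including the handling of the centering when $\alpha>2(1+\beta)$, is the other delicate ingredient, but it is of the same type already developed for Theorem~\ref{thm:mainn} and in \cite{davis:heiny:mikosch:xie:2016}.
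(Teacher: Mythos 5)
Your argument is correct and follows exactly the route the paper intends: the paper omits the proof of Theorem~\ref{thm:pp}, deferring to Theorem~3.4 of \cite{davis:heiny:mikosch:xie:2016}, whose strategy is precisely your three steps --- replace $\la_i(s_1,s_2)$ by $\gamma_i(s_1,s_2)$ via \eqref{eq:mainn1}, prove Poisson convergence of $\sum_i\delta_{a_{np}^{-4}D_i^2}$ from the single-big-jump/large-deviation tail asymptotics for $D$, and pass to the cluster process by continuous mapping of the finite truncations $T_J$ together with the uniform tail bound based on $v_{J+1}^2(s_1,s_2)\to 0$. No gaps; your write-up supplies the details the paper leaves to the cited reference.
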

For the proof of Theorem~\ref{thm:pp} one can follow the lines of the proof of 
Theorem 3.4 in \cite{davis:heiny:mikosch:xie:2016}; we omit further details.
\begin{remark}\rm 
Since \eqref{eq:pp} is based on a continuous mapping
argument involving the same points $\Gamma_i^{-4/\alpha}$ for arbitrary $(s_1,s_2)$, \eqref{eq:pp}
immediately extends to processes whose multivariate points 
consist of $a_{np}^{-4}\la_i(s_1,s_2)$
for distinct choices of $(s_1,s_2)$.
\end{remark} 

\par
The limiting point process in \eqref{eq:pp} yields a plethora of ancillary results. For example, one can easily derive the limiting distribution of $a_{np}^{-4} \lambda_{k}(s_1,s_2)$ for fixed $k\ge 1$: 
\begin{equation*}
\begin{split}
\lim_{\nto}\P(a_{np}^{-4} \lambda_{k}(s_1,s_2)\le x)&= \lim_{\nto}\P(N_n^{(s_1,s_2)}(x,\infty)<k)
=  \P(N^{(s_1,s_2)}(x,\infty)<k)\,, \quad  x>0\,.
\end{split}
\end{equation*}
In particular,the normalized  largest eigenvalue has a re-scaled $\Phi_{\alpha/4}$-Fr\'echet
\ds :
\begin{equation*}
\begin{split}
\lim_{\nto}\P(a_{np}^{-4} \lambda_1(s_1,s_2)\le x)&=
\P(\Gamma_1 ^{-4/\alpha} v_1^2(s_1,s_2)\le x)=  \exp(-x^{-\alpha/4} v_1^{\alpha/2}(s_1,s_2)), \quad  x>0\,.
\end{split}
\end{equation*}
\par
In the paper \cite{lam:yao:2012}
the following estimator was considered in the context of a factor model for $s_1=1$, and fixed values 
$s_2,k\ge 1$:
\beam\label{eq:lamyaoest}
\underset{1\le i\le k}{\mathrm{arg\,min}}\,  \dfrac{\la_{i+1}(s_1,s_2)}{\la_{i}(s_1,s_2)}\,.
\eeam
Writing $q_i$ for the $i$th largest value in the set $\{ v_j^2(s_1,s_2) \Gamma_{\ell}^{-4/\alpha}:j,\ell \ge 1\}$, we have the joint \con\ of the ratios
\beam\label{eq:lamyaoestlim}
\Big(\dfrac{\la_2(s_1,s_2)}{\la_1(s_1,s_2)},\ldots,\dfrac{\la_{k+1}(s_1,s_2)}{\la_k(s_1,s_2)}\Big)
\std \Big( \dfrac{q_2}{q_1},\ldots,
  \dfrac{q_{k+1}}{q_k}   \Big)\,.
\eeam
Hence the limit \ds\ for the estimator \eqref{eq:lamyaoest} can be achieved by
simulation.
\par
In particular, if $r(s_1,s_2)=1$,
 another immediate consequence of \eqref{eq:pp} is
\beao
\frac{ \big(\la_{1}(s_1,s_2),\ldots,\la_{k+1}(s_1,s_2)\big)}{a_{np}^{4} \, v_1^{2} (s_1,s_2)}\std 
\big(\Gamma_1^{-4/\alpha},\ldots,\Gamma_{k+1}^{-4/\alpha}\big)\,,
\eeao
so that \eqref{eq:lamyaoestlim} reads as 
\beao
\Big(\dfrac{\la_2(s_1,s_2)}{\la_1(s_1,s_2)},\ldots,\dfrac{\la_{k+1}(s_1,s_2)}{\la_k(s_1,s_2)}\Big)
\std \Big( \Big(\dfrac{\Gamma_1}{\Gamma_2}\Big)^{4/\alpha},\ldots,
  \Big(\dfrac{\Gamma_{k}}{\Gamma_{k+1}}\Big)^{4/\alpha}   \Big)\,.
\eeao
Hence the limit \ds\ would not depend on $(s_1,s_2)$ in this case. Notice that
the ratios $\Gamma_i/\Gamma_{i+1}$ 
are independent Beta$(i,1)$-distributed for $i=1,\ldots,k$. Therefore
\beao
-\Big(\log \dfrac{\la_2(s_1,s_2)}{\la_1(s_1,s_2)}, 2\,\log \dfrac{\la_3(s_1,s_2)}{\la_2(s_1,s_2)},\ldots,k\,\log\dfrac{\la_{k+1}(s_1,s_2)}{\la_k(s_1,s_2)}\Big)
\std \dfrac 4 \alpha \,(E_1,\ldots,E_k)\,.
\eeao
%and the limit of the log-spacings {\red 
%\beao
%&&\big(\log \la_{1}(s_1,s_2)-\log \la_{2}(s_1,s_2),\ldots,\log \la_{k+1}(s_1,s_2)-\log \la_{k}(s_1,s_2)\big)\\&\std&
%-\dfrac{4}{\alpha}\,\big(\log(\Gamma_1/\Gamma_2),\ldots,\log (\Gamma_{k}/\Gamma_{k+1})\big)\,.
%\eeao
%We also observe that the limit \ds\ of the estimator \eqref{eq:lamyaoest} does not depend on $s$ in this case.
\par
A continuous mapping argument similar to \cite{resnick:2007}, Theorem~7.1,
yields for the trace
\beao
a_{np}^{-4}\,{\rm tr}\big( \bfP(s_1,s_2)\big)=a_{np}^{-4}\sum_{i=1}^p\la_i(s_1,s_2)\std 
\sum_{j=1}^\infty v_j^{2} (s_1,s_2) \sum_{i=1}^\infty \Gamma_i^{-4/\alpha}\,.
\eeao
Since $\alpha\in (0,4)$, the right-hand series converges a.s. and represents a positive  
$\alpha/4$-stable \rv ; see  \cite{samorodnitsky:taqqu:1994} for more information on series representations 
of stable \rv s. We also have the joint \con\ 
\beao
a_{np}^{-4} \Big(\la_{1}(s_1,s_2)\,,{\rm tr}\big( \bfP(s_1,s_2)\big)\Big)\std 
\Big(v_1^2(s_1,s_2)\Gamma_1^{-4/\alpha}, \sum_{j=1}^\infty v_j^{2} (s_1,s_2) \sum_{i=1}^\infty \Gamma_i^{-4/\alpha}\Big)\,.
\eeao
Therefore we have  self-normalized \con\ of the largest eigenvalue  $\la_{1}(s_1,s_2)$
\beao
\dfrac{\la_{1}(s_1,s_2)}{{\rm tr}\big( \bfP(s_1,s_2)\big)} \std
\dfrac{v_1^2(s_1,s_2)}{\sum_{j=1}^\infty v_j^{2} (s_1,s_2)}\dfrac{\Gamma_1^{-4/\alpha} }{ \sum_{i=1}^\infty \Gamma_i^{-4/\alpha} }\,.
\eeao
The limiting variable is the scaled  quotient of a $\Phi_{\alpha/4}$-Fr\'echet \rv\ and a positive 
$\alpha/4$-stable \rv .

%---------------------------------------------------------------------------
\subsection{Singular values of the symmetrization}\label{subsec:34}
For $s \ge 1$, the sample autocovariance matrix $\bfC(s)$ may have complex eigenvalues. An alternative way of creating real eigenvalues is by applying {\em symmetrization}. Therefore we 
study the matrix 
\beam\label{eq:sss}
\bfA_n(s_1,s_2)=\sum_{s=s_1}^{s_2} \mbox{$\frac 12$}\, \big(\bfC_n(s) +\bfC_n(s)'\big)\,
\eeam
and its singular values 
\begin{equation*}
\sigma_{1}(s_1,s_2)\ge \cdots \ge \sigma_{p}(s_1,s_2)\ge 0\,.
\end{equation*}
Our focus is on singular values because the eigenvalues can be negative. This corresponds to an ordering of eigenvalues with respect to their absolute values.

The role of the matrix $\bfK(s_1,s_2)$ in \eqref{eq:Ks1s2} will now be played by
\begin{equation}\label{eq:Ks1s2t}
\wt\bfK(s_1,s_2)= \sum_{s=s_1}^{s_2} \mbox{$\frac 12$}\, \big(\bfM(s) +\bfM(s)'\big)
\end{equation}
with ordered singular values $\wt v_1(s_1,s_2) \ge \wt v_2(s_1,s_2) \ge\cdots $. Again we assume that $\wt\bfK(s_1,s_2)$ is not the null-matrix. Then we have the following analog of 
Theorem~\ref{thm:mainn}.

\begin{theorem}\label{thm:mainan}
Assume the conditions of Theorem~\ref{thm:mainn}.
Then we have for $0\le s_1\le s_2<\infty$,
\begin{equation}\label{eq:mainsym}
a_{np}^{-4} \max_{i=1,\ldots,p} |\sigma_{i}(s_1,s_2)-\wt\gamma_{i}(s_1,s_2)| \cip 0, \quad \nto\,,
\end{equation}
where $(\wt\gamma_{i}(s_1,s_2))$ are the ordered values of the set
$\big\{D_{(i)} \wt v_j(s_1,s_2)\,, i=1,\ldots,p\,;j=1,2,\ldots\big\}$.

Moreover, if $\alpha<2(1+\beta)$, then
\begin{equation*}
a_{np}^{-4} \max_{i=1,\ldots,p} |\sigma_{i}(s_1,s_2)-\wt\delta_{i}(s_1,s_2)| \cip 0, \quad \nto\,,
\end{equation*}
where $(\wt\delta_{i}(s_1,s_2))$ are the ordered values of the set
$\big\{Z_{(i),np}^2 \wt v_j(s_1,s_2)\,, i=1,\ldots,p\,;j=1,2,\ldots\big\}$.
\end{theorem}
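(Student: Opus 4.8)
The plan is to mirror the proof of Theorem~\ref{thm:mainn} line by line, replacing the quadratic block $\bfC(s)\bfC(s)'$ by the symmetrized linear block $\frac12(\bfC(s)+\bfC(s)')$ and, in the deterministic part, $\M(s)\M(s)'$ by $\frac12(\M(s)+\M(s)')$. A convenient feature is that $\bfA$ in \eqref{eq:sss} is symmetric, so its singular values are the moduli of its real eigenvalues and obey Weyl's inequality $|\sigma_i(\bfB)-\sigma_i(\bfB')|\le\twonorm{\bfB-\bfB'}$ for arbitrary matrices. Both $\sigma_i$ and $\wt\gamma_i$ are of order $a_{np}^2$, so it suffices to control every error at that order; I will in fact establish $\max_{i}|\sigma_i-\wt\gamma_i|=o_P(a_{np}^2)$, which implies \eqref{eq:mainsym}. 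Following Section~\ref{subsec:sketchproofthm:mainn}, I would first use the summability condition \eqref{eq:2a} to reduce to a filter with finitely many non-zero coefficients, checking that the induced perturbation of the singular values is $o_P(a_{np}^2)$.

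The core is a signal-plus-remainder split $\bfA=\bfA^\ast+R$. A direct computation shows that the contribution of a single $Z_{ab}^2$ to $\frac12(\bfC(s)+\bfC(s)')$ equals $Z_{ab}^2\,\bigl(\tfrac12(\M(s)+\M(s)')\bigr)_{i-a,\,j-a}$, \emph{independently of the time index $b$}; summing over $s_1\le s\le s_2$ and over all $t$ produces the banded matrix $\bfA^\ast=\sum_{a=1}^p D_a\,\wt\bfK^{(a)}$, where $D_a$ is as in \eqref{eq:newD} (already centered when $\alpha>2(1+\beta)$) and $\wt\bfK^{(a)}$ denotes $\wt\bfK(s_1,s_2)$ placed as a block on rows and columns $[a-m,a+m]$ in the manner of \eqref{eq:Khat}--\eqref{eq:matrixu}. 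The remainder $R$ collects the cross terms $Z_{ab}Z_{a'b'}$ with $(a,b)\neq(a',b')$ together with the centering corrections.

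Next I would threshold. With $\mathcal L=\{a:|D_a|>\varepsilon a_{np}^2\}$, a set of size $O_P(1)$ for fixed $\varepsilon$, the point-process description of the largest $|D_a|$ shows that the rows in $\mathcal L$ are pairwise separated by more than $2m$ with probability tending to one. Hence the blocks $\wt\bfK^{(a)}$, $a\in\mathcal L$, are disjoint, $\bfA^{\ast\ast}:=\sum_{a\in\mathcal L}D_a\wt\bfK^{(a)}$ is block diagonal, and—using that $\wt\bfK$ is symmetric, so its singular values $\wt v_j(s_1,s_2)$ are the moduli of its eigenvalues—the singular values of $\bfA^{\ast\ast}$ are exactly the ordered set $\{|D_a|\,\wt v_j(s_1,s_2):a\in\mathcal L,\ j\ge1\}$, i.e.\ the above-threshold part of $\{D_{(i)}\wt v_j\}=\{\wt\gamma_i\}$. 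The banded tail $\bfA^\ast-\bfA^{\ast\ast}$ has operator norm at most $(2m+1)\,\varepsilon a_{np}^2\,\twonorm{\wt\bfK}$, made negligible by sending $\varepsilon\downarrow0$ after $\nto$.

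The main obstacle is the bound $\twonorm{R}=o_P(a_{np}^2)$ on the cross-term matrix; this is precisely where the \ld\ theory for heavy-tailed sums and the point-process control of the extreme noise locations are needed. It is the symmetrized analogue—being linear rather than quadratic in $\bfC(s)$, a somewhat lighter version—of the corresponding estimate in the proof of Theorem~\ref{thm:mainn}, which I would adapt with minimal changes. Granting it, two applications of Weyl's inequality give $\max_{i}|\sigma_i-\sigma_i(\bfA^{\ast\ast})|\le\twonorm{R}+\twonorm{\bfA^\ast-\bfA^{\ast\ast}}=o_P(a_{np}^2)$, and since all sub-threshold values are at most $\varepsilon a_{np}^2\,\wt v_1$, letting $\varepsilon\downarrow0$ yields $\max_{i}|\sigma_i-\wt\gamma_i|=o_P(a_{np}^2)$ and hence \eqref{eq:mainsym}. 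For the second assertion, when $\alpha<2(1+\beta)$ each large $D_a$ is asymptotically dominated by its largest summand, so the ordered $D_{(i)}$ coincide to leading order with $Z_{(i),np}^2$—the lemma already used for Theorem~\ref{thm:mainn}—and replacing $D_{(i)}$ by $Z_{(i),np}^2$ turns $\wt\gamma_i$ into $\wt\delta_i$ at cost $o_P(a_{np}^2)$.
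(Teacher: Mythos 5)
Your proposal is correct and follows essentially the same route the paper intends: the authors omit the proof precisely because it is the line-by-line modification of the proof of Theorem~\ref{thm:mainn} that you carry out, replacing $\bfC(s)\bfC(s)'$ by $\tfrac12(\bfC(s)+\bfC(s)')$ and $\M(s)\M(s)'$ by $\tfrac12(\M(s)+\M(s)')$, with the remainder bound following from Theorem~\ref{prop:main} applied to $\bfC_n(s)$ and its transpose. Your only deviations are cosmetic (thresholding at $\varepsilon a_{np}^2$ instead of keeping the top $k_p$ order statistics), and you rightly note that the natural scale here is $a_{np}^{2}$, so your $o_\P(a_{np}^{2})$ bound gives the stated convergence a fortiori.
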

An inspection of the proof of Theorem~\ref{thm:mainn} shows that all its parts can be modified  
when $\bfP_n(s_1,s_2)$ is replaced by $\bfA_n(s_1,s_2)$; therefore we omit a proof.
Theorem~\ref{thm:eigenvector} also remains valid for the eigenvectors $\y_i(s_1,s_2)$ of $\bfA_n(s_1,s_2)$ if we let the $\bfu_i(s_1,s_2)$ denote the eigenvectors of $\wt\bfK(s_1,s_2)$.

As an analog of Theorem \ref{thm:pp} we get
\begin{equation*}
\sum_{i=1}^p \delta_{a_{np}^{-2}\sigma_{i}(s_1,s_2)}
 \cid  \sum_{i=1}^\infty
\sum_{j=1}^{\infty} \delta_{\Gamma_i^{-2/\alpha} \wt v_j(s_1,s_2)}\,, \qquad \nto.
\end{equation*}

\begin{example}\label{ex:symmetrization}{\em 
Again, we consider the separable case and use the notation and assumptions from Examples \ref{ex:eigenvalues} and \ref{ex:eigenvectors}.

By symmetry of $\bfD$, we find
\begin{equation*}
\wt\bfK(s_1,s_2)= \sum_{s=s_1}^{s_2} \mbox{$\frac 12$}\, \big(\bfM(s) +\bfM(s)'\big) 
= \sum_{s=s_1}^{s_2} \ov c (s) \bfD\,.
\end{equation*}
The approximating values in \eqref{eq:mainsym} are therefore 
$\wt \gamma_i(s_1,s_2)=D_{(i)} | \sum_{s=s_1}^{s_2} \ov c (s)| \, \, \ov d$, $1\le i\le p$.
In contrast to the equality \eqref{eq:ident}, we only obtain the inequality 
\begin{equation}\label{eq:ident2}
\wt \gamma_i(s_1,s_2) \le \sum_{s=s_1}^{s_2} \wt \gamma_i(s,s)\,
\end{equation}
for the symmetrized autocovariances.
This is due to the fact that $\ov c (s)$ can be positive or negative. Different signs lead to a cancellation effect which reduces the magnitude of the singular values.

Since $|\ov c (s)|\le \ov c (0)$ for $s>1$ we also observe that the approximating quantities $\wt \gamma_i(s,s)$
are always dominated by  $\wt \gamma_i(0,0)$.
Using that $(\ov c (s))$ is a non-negative definite \fct , we conclude that
$\sum_{s=0}^{s_0}\ov c (s)\ge 0$, hinting at the fact that lag $0$ is of central importance.

Finally, if the $\bfc$-\seq\ has non-negative components, then there is equality in \eqref{eq:ident2} which implies $\wt \sigma_i(s_1,s_2)\approx \sum_{s=s_1}^{s_2} \wt \sigma(s,s)$ for large $n$. It is apparent in Figure~\ref{fig:LamYao2}(b)  that this approximation does not necessarily hold for real-life return data.

An approximation to the eigenvector of $\bfA_n(s_1,s_2)$ associated with the $i$th largest absolute eigenvalue is given by \eqref{eq:eigvec}.

As regards point processes, we have 
\beao
\sum_{i=1}^p \delta_{a_{np}^{-2} \,\wt v_1^{-1}(s_1,s_2) \, \sigma_i(s_1,s_2)} \std \sum_{i=1}^\infty \delta_{\Gamma_i^{-2/\alpha}}\,,\qquad \nto\,.
\eeao
In other words, the limit is a Poisson point process on $(0,\infty)$ with mean \ms\
$\mu(x,\infty)= x^{-\alpha/2}$, $x>0$.

\begin{figure}[htb!]
  \centering
  \subfigure[] {
    \includegraphics[scale=0.4]{eigen_sum.pdf}
    \label{fig:LamYao:a}
  }
  \subfigure[] {
    \includegraphics[scale=0.4]{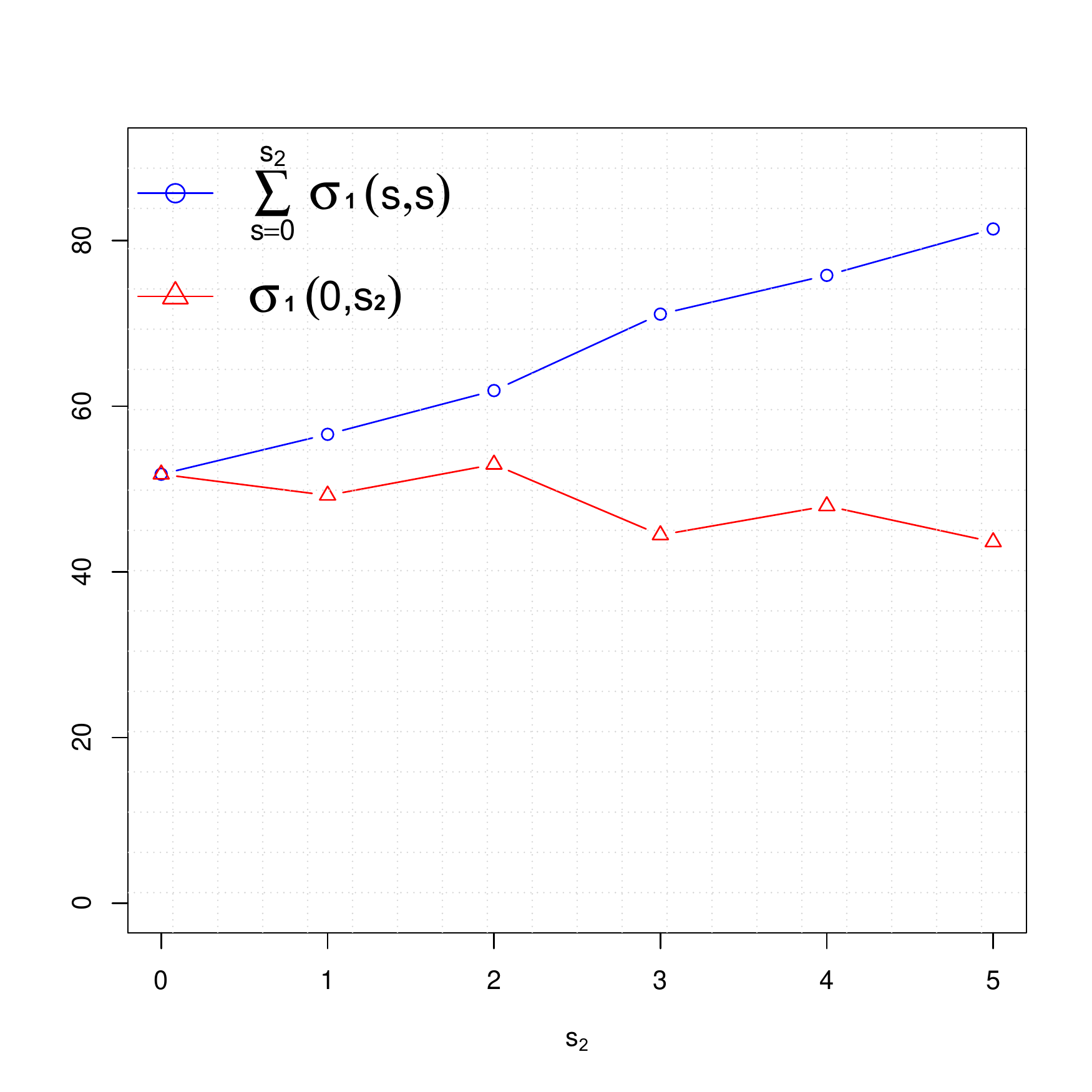}
    \label{fig:LamYao:b}
  }
  \caption{(a): A comparison of the sums of the largest eigenvalues  $\lambda_1(s,s)$ of the squared sample autocovariance matrices and the largest eigenvalue $\lambda_1(0,s_2)$ of the sum of these matrices for $s_2=0,\ldots,5$.
The underlying data $\bfX$ consists of $p=478$  log-return series composing
the S\&P 500 index estimated from $n=1345$ daily observations from 01/04/2010 to 02/28/2015.
(b): We compare the sum of the largest largest singular values of the symmetrized sample autocovariance matrices and the largest singular value of their sum. The matrices in the latter sum might not be positive definite which is a possible explanation for the observed cancellation effect.}
  \label{fig:LamYao2}
\end{figure}
}\end{example}

%-------------------------------------------
\subsection{Limiting spectral distribution}\label{sec:limit}

So far we studied the behaviors of the largest eigenvalues of sample autocovariance matrices. In Section~\ref{sec:pp}, we employed point process techniques to describe their joint convergence. We saw that they are separated from each other, which in turn enabled us to characterize the associated eigenvectors in Section~\ref{subsec:32}. In contrast, the bulk (or non-extreme) eigenvalues are usually not separated. The bulk is often studied via the so--called empirical spectral distribution  
which is defined for a $p\times p$ matrix $\bfA$ with real eigenvalues $\lambda_1(\bfA),\ldots,\lambda_p(\bfA)$  by 
\begin{equation*}
F_{\bfA}(x)= \frac{1}{p}\; \sum_{i=1}^p \1_{\{ \lambda_i(\bfA)\le x \}}, \qquad x\in  \R\,.
\end{equation*}
The empirical spectral distribution is uniquely characterized by its Stieltjes transform
\begin{equation*}
s_{\bfA}(z)= \int_{\R} \frac{1}{x-z} \dint F_{\bfA}(x) \,, \quad z\in \C^+\,,
\end{equation*}
where $\C^+$ denotes the complex numbers with positive imaginary part; see for instance \cite{yao:zheng:bai:2015}.

In this subsection, we describe the limit of the empirical spectral distributions of the sample covariance matrices when $p$ and $n$ grow proportionally. An application of Corollary 7 in \cite{banna:merlevede:peligrad:2015} yields the following result.
\begin{proposition}
Consider the linear process \eqref{eq:1} and assume that
\begin{itemize}
\item
$p/n \to \gamma \in (0,\infty)$,
\item
$\E[Z^2]=1$ and $\E[Z]=0$,
\item
the summability condition $\sum_{k,l \in \Z}  |h_{kl}| <\infty$ holds.
\end{itemize}
Then the empirical spectral distributions $F_{n^{-1} \X\X'}$ converge, with probability 1, to a nonrandom distribution function $F$ whose Stieltjes transform $s$ satisfies 
\begin{equation}\label{eq:sz}
s(z)=\int_0^1 h(x,z) \dint x\,, \quad z\in \C^+\,,
\end{equation}
where $h(x,z)$ is a solution to the equation 
\begin{equation*}
h(x,z)=\Big( -z +\int_0^1 \frac{f(x,t)}{1+\gamma \int_0^1 f(u,t) h(u,z) \dint u} \dint t  \Big)^{-1}
\end{equation*}
with
\begin{equation*}
f(x,y)=\sum_{k,l\in \Z} \gamma_{kl} \e^{-2\pi \mathbf{i} (kx+ly)}\quad \text{ and } \quad \gamma_{kl}= \sum_{u,v\in \Z} h_{uv} h_{u-k,v-l}\,.
\end{equation*}
\end{proposition}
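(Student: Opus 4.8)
The plan is to recognize that the stated Proposition is, at its core, an application of an off-the-shelf limiting-spectral-distribution theorem — Corollary 7 of \cite{banna:merlevede:peligrad:2015} — to our linear model \eqref{eq:1}, and the real work is to verify that the hypotheses of that corollary are met and to identify the spectral density $f(x,y)$ correctly. First I would observe that under the stated assumptions $\E[Z^2]=1$, $\E[Z]=0$, and $\sum_{k,l}|h_{kl}|<\infty$, the field $(X_{it})$ defined by \eqref{eq:1} is a well-defined, strictly stationary, centered, square-integrable linear random field. The covariance structure is then computed directly: since the $(Z_{it})$ are iid with unit variance,
\begin{equation*}
\E[X_{it}X_{i-k,t-l}]=\sum_{u,v\in\Z}h_{uv}h_{u-k,v-l}=\gamma_{kl}\,,
\end{equation*}
so $\gamma_{kl}$ is precisely the autocovariance of the field at spatial lag $k$ and temporal lag $l$. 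This identifies the field as a stationary process whose spectral density is the bivariate Fourier series $f(x,y)=\sum_{k,l}\gamma_{kl}\,\e^{-2\pi\mathbf{i}(kx+ly)}$, which converges absolutely because $\sum|h_{kl}|<\infty$ forces $\sum_{k,l}|\gamma_{kl}|<\infty$ via Young's inequality for convolution sums.

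Next I would match our setting to the framework of \cite{banna:merlevede:peligrad:2015}. Their result applies to sample covariance matrices $n^{-1}\X\X'$ of a stationary process under a proportional-growth regime $p/n\to\gamma\in(0,\infty)$ together with a short-range-dependence condition and a finite second moment. The absolute summability $\sum_{k,l}|h_{kl}|<\infty$ is exactly the kind of hypothesis that guarantees the requisite mixing/dependence decay and the existence of a continuous, bounded spectral density; I would check that it implies whatever Lindeberg-type or martingale-approximation condition their Corollary 7 demands. Having verified the hypotheses, the conclusion — almost sure weak convergence of $F_{n^{-1}\X\X'}$ to a nonrandom $F$ whose Stieltjes transform is given by the fixed-point system \eqref{eq:sz} — is then a direct quotation of their theorem, with $f(x,y)$ substituted in.

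The main obstacle, and essentially the only nontrivial step, is the bookkeeping that translates between the two-dimensional indexing of our field and the (possibly one-dimensional or differently normalized) indexing used in \cite{banna:merlevede:peligrad:2015}, so that their abstract spectral-density object coincides with our explicit $f(x,y)$. One must confirm that the way the columns $\x_t$ are formed from the field $(X_{it})$ matches their notion of a stationary sequence of $p$-vectors, and that the symbol governing their limiting equation is indeed the bivariate spectral density computed above rather than, say, a marginal or a rescaled version. Once this identification is made carefully, the proposition follows immediately; I do not expect any genuinely new analytic estimate to be required beyond the absolute-summability computation of $\gamma_{kl}$ and $f$.
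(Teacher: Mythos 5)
Your proposal matches the paper exactly: the paper gives no detailed argument, simply stating that the proposition follows from an application of Corollary~7 of Banna, Merlev\`ede and Peligrad, and your verification of the hypotheses and identification of $\gamma_{kl}$ and $f(x,y)$ is precisely the bookkeeping that citation implicitly requires. The only additional content in the paper is a remark that Corollary~7 contains a minor typo ($N^{-1}$ should be $p^{-1}$ in the definition of $\mathbb{B}_N$), which is worth noting when carrying out the translation of indexing you describe.
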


The distribution function $F$ can be obtained numerically from \eqref{eq:sz}; we refer to \cite{dobriban:2015} for details.
In the iid case, i.e. $X_{it}=Z_{it}$, \eqref{eq:sz} reads as $s(z)=\frac{1+\gamma s(z)}{-z -z \gamma z s(z)+1}$
with solution  
\beam\label{eq:stieltjestransform}
s_{F_{\gamma}}(z)
= \frac{1-\gamma -z +\sqrt{(1+\gamma-z)^2-4\gamma}}{2 \gamma z} \,, \quad z\in \C^+\,.
%+ \frac{1-\gamma}{\gamma z} \1_{\{\gamma>1\}}\,.
\eeam
This is the Stieltjes transform of the famous Mar\v cenko--Pastur law $F_\gamma$. If $\gamma \in (0,1]$,  $F_\gamma$  has density,
\begin{eqnarray}\label{eq:MPch1}
f_\gamma(x) =
\left\{\begin{array}{ll}
\frac{1}{2\pi x\gamma} \sqrt{(b-x)(x-a)} \,, & \mbox{if } a\le x \le b, \\
 0 \,, & \mbox{otherwise,}
\end{array}\right.
\end{eqnarray}\noindent
where $a=(1-\sqrt{\gamma})^2$ and $b=(1+\sqrt{\gamma})^2$. If $\gamma>1$, the \MP law has an additional point mass $1-1/\gamma$ at $0$.

\begin{remark}{\em
There is a minor typo in Corollary 7 of \cite{banna:merlevede:peligrad:2015}. In the definition of  $\mathbb{B}_N$, $N^{-1}$ needs to be replaced by $p^{-1}$; compare with equation (11) of the same paper.
}\end{remark}

%-----------------------------------------------------------------------------
\section{Proofs}\label{sec:proof}\setcounter{equation}{0}

%--------------------------------------------------
\subsection{Sketch of the proof of Theorem~\ref{thm:mainn} in the case of a finite filter}\label{subsec:sketchproofthm:mainn}

We consider a finite filter $(h_{kl})$ in the sense that for some $m\in \N$ we have $h_{kl}=0$ if $|k|\vee |l|>m$. This implies that $\bfM(s)_{ij}=0$ for $|i|\vee |j|>m$ and all $s\ge 0$. 
In words, $\bfM(s)$ is zero outside of a block of size $(2m+1)\times (2m+1)$. Now we embed this block into $p\times p$ matrices $\bfM_{a}(s) =(\bfM_{a}(s)_{ij})_{i,j=1,\ldots,p}$, $a\in\Z$, which we define by  

\begin{eqnarray}\label{eq:matrixm}
{\bfM}_{a}(s)_{ij}= \left\{\begin{array}{ll}
{\bfM}(s)_{i-a,j-a}\,,&i,j=(a-m) \vee 1,\ldots,(a+m)\wedge p\,,\\
0\,,&\mbox{otherwise.}
\end{array}\right.
\end{eqnarray}

%In words, ${\bfM}_{a}(s)$ has zero entries but the $(2m+1)\times (2m+1)$ block with lower left corner $(a-m,a-m)$ which  coincides with ${\bfM}(s)$. 
For $m+1\le a\le p-m$, ${\bfM}(s)$ 
has the following block-diagonal form
\begin{equation*}
\bfM(s) =
\begin{pmatrix}
\zero & & \\
& \bfM_a(s) &\\
& & \zero
\end{pmatrix}\,.
\end{equation*}  
%Therefore, $\bfM(s)$ and  $\bfM_a(s)$ have the same non-zero singular values.

Recall that $\zero$ denotes a quadratic matrix consisting of zeros. 

 By Theorem~\ref{prop:main} below, we have for any integer sequence $k=k_p\to \infty$ such that $k_p^2=o(p)$,
\begin{equation}\label{eq:appfinite}
a_{np}^{-2} \Big\|  \bfC_n (s)- \sum_{i=1}^k D_{(i)} \bfM_{L_i}(s) \Big\|_2 \cip 0\,, \quad \nto \,.
\end{equation}
For such a $k$ it holds $\P(A_n)\to 1$, where  
\begin{equation}\label{eq:setA}
A_n=\{|L_i-L_j|>2m+1\,, i,j=1,\ldots,k\,, i\ne j\} \cap \{ 1\le L_i-m \le L_i+m\le p \,, i=1,\ldots,k \}\,.
\end{equation}
On the set $A_n$, we have
\begin{equation}\label{eq:nonoverlap}
\bfM_{L_i}(s) \bfM_{L_j}(s)'= \zero \,, \quad i,j=1,\ldots,k\,, i\ne j\,.
\end{equation}
A combination of \eqref{eq:appfinite} and \eqref{eq:nonoverlap} shows 
\begin{equation}\label{eq:appfinite1}
a_{np}^{-4} \Big\| \bfP(s_1,s_2)- \sum_{i=1}^k D_{(i)}^2 \sum_{s=s_1}^{s_2} \bfM_{L_i}(s) \bfM_{L_i}(s)' \Big\|_2 \cip 0\,, \quad \nto \,.
\end{equation}
Define $\bfK_a(s_1,s_2)$ analogously to \eqref{eq:matrixm}. Then we get the identity 
\begin{equation*}
\bfK_{L_i}(s_1,s_2) = \sum_{s=s_1}^{s_2} \bfM_{L_i}(s) \bfM_{L_i}(s)'\,, \quad i=1,\ldots,k\,.
\end{equation*}
The eigenvalues of the block-diagonal matrix $\sum_{i=1}^k D_{(i)}^2 \bfK_{L_i}(s_1,s_2)$, which approximates $\bfP(s_1,s_2)$, are the $(\gamma_{i}(s_1,s_2))$; consult the proof of Theorem~\ref{thm:eigenvector} for more insight. Finally, an application of Weyl's theorem \cite{bhatia:1997} on eigenvalue perturbations finishes the proof of \eqref{eq:mainn1}. A detailed proof can be found in Section~\ref{subsec:proofthm:mainn}.

%--------------------------------------------------
\subsection{Proof of Theorem~\ref{thm:eigenvector}}\label{proofthm:eigenvector}

In this proof we suppress the dependence of most quantities on $(s_1,s_2)$ in the notation. Let $k=k_p\to\infty$ be an integer sequence such that $k_p^2=o(p)$ as $\nto$. We have seen in \eqref{eq:appfinite1} that 
$\sum_{i=1}^k D_{(i)}^2 \bfK_{L_i}$ approximates $\bfP$ in spectral norm. Our proof consists of two steps:
\begin{itemize}
\item[(i)] Show that the eigenvectors of $\sum_{i=1}^k D_{(i)}^2 \bfK_{L_i}$ associated with its largest eigenvalues are given by $\bfu_{b(i)}^{a(i)},i\ge 1$.
\item[(ii)] Bound the difference between the eigenvectors of $\bfP$ and those of $\sum_{i=1}^k D_{(i)}^2 \bfK_{L_i}$.
\end{itemize}

For our considerations it is sufficient to work on the set $A_n$ defined in \eqref{eq:setA}. On $A_n$, we have the following block-diagonal structure of $\sum_{i=1}^k D_{(i)}^2 \bfK_{L_i}=\sum_{i=1}^k D_{L_i}^2 \bfK_{L_i}$:
\begin{equation}\label{eq:form}
\bfU_n:=\sum_{i=1}^k D_{L_i}^2 \bfK_{L_i}=
\begin{pmatrix}
\zero &&&&&\\
& D_{\pi_1}^2 \widehat \bfK &&&&\\
&& \zero &&&\\
&&& \ddots &&\\
&&&& D_{\pi_k}^2 \widehat \bfK&\\
&&&&& \zero
\end{pmatrix}\,,
\end{equation} 
where $\pi_1,\ldots,\pi_k$ is a certain permutation of $L_1,\ldots,L_k$; see \eqref{eq:help6}. 
From \eqref{eq:form} one deduces that the  eigenvectors associated with the positive eigenvalues of $\bfU_n$ are given by the ``appropriately shifted'' eigenvectors $\bfu_i$ of $\widehat \bfK$ and must be of the form \eqref{eq:sgdsd}.
The positive eigenvalues of $\bfU_n$ are $D_{\pi_i}^2 \, v_j^2$ with associated eigenvectors 
$\bfu_j^{\pi_i} \,, i\le k, j\le r$.  

On the set $A_n$, we have for fixed $1 \le j \le k$, noting that $a(j)=L_i$ for some $i$,
\begin{equation*}
\begin{split}
\Big( \sum_{i=1}^k D_{L_i}^2 \bfK_{L_i} \Big) \bfu_{b(j)}^{a(j)}&=\sum_{i=1, L_i\neq a(j)}^k D_{L_i}^2 \bfK_{L_i} \bfu_{b(j)}^{a(j)} + D^2_{a(j)} \bfK_{a(j)}\bfu_{b(j)}^{a(j)}\\
&= D^2_{a(j)} {v}^2_{b(j)} \bfu_{b(j)}^{a(j)} =  \gamma_j  \bfu_{b(j)}^{a(j)}.
\end{split}
\end{equation*}
Therefore, the eigenvector of $\bfU_n$ associated with its $j$th largest eigenvalue $\gamma_j$ is $\bfu_{b(j)}^{a(j)}$. This  finishes the proof of step~(i).
\par

Next, we turn to step (ii). By definition of the spectral norm as a supremum over the unit sphere and \eqref{eq:appfinite1}, we have for $\vep>0$, 
\begin{equation*}
\begin{split}
\lim_{\nto} &\P\Big(a_{np}^{-4} \max_{i=1,\ldots,k}\max_{j=1,\ldots,r} \ltwonorm{\bfP \, \bfu_j^{L_i}-D^2_{L_i} v_j^2 \, \bfu_j^{L_i}} > \vep \Big)\\ 
&\le \lim_{\nto} \P\Big( a_{np}^{-4}\Big\| \bfP-\sum_{i=1}^k
 D^2_{L_i} \bfK_{L_i}\Big\|_2 > \vep \Big) = 0\,.
\end{split}
\end{equation*}
This shows that for $j\ge 1$ fixed,
\begin{equation}\label{eq:mhjf}
\vep^{(n)}:=a_{np}^{-2} \ltwonorm{\bfP \, \bfu_{b(j)}^{a(j)}-\gamma_j \, \bfu_{b(j)}^{a(j)}} \cip 0\,.
\end{equation}

Before we can apply Proposition~\ref{prop:perturbation} we need to show that, 
with probability converging to $1$, there are no other eigenvalues in a suitably small interval around $\lambda_j$.
By assumption $H_{s_1,s_2}$, 
any two non-zero eigenvalues of $\bfK$ are distinct. Hence, recalling that $r$ is the rank of $\bfK$,
\begin{equation}\label{eq:helpnow}
v_j^2>v_{j+1}^2\,, \quad j\le r.
\end{equation}
Let $\xi >1$. We define the set
\begin{equation*}
\Omega_n = \Omega_n(j,\xi)= \{a_{np}^{-4} |\la_j-\la_i|>\xi\, \vep^{(n)}\, :\, i\neq j =1,\ldots,p  \}\,.
\end{equation*} 
Using \eqref{eq:mhjf} and \eqref{eq:helpnow} combined with Theorem~\ref{thm:mainn}, we obtain
\begin{equation*}
\begin{split}
\lim_{\nto} \P\big(\Omega_n^c) &= \lim_{\nto} \P( a_{np}^{-4} \min\{ \la_{j-1}-\la_{j}, \la_j-\la_{j+1}\} \le \xi\, \vep^{(n)}\big) =0\,.
\end{split}
\end{equation*}
%The separation of the eigenvalues $(v_{j}^2)$ implies the separation of the eigenvalues $(\wt \lambda_{\wt L_i})$.

By Proposition~\ref{prop:perturbation}, the unit eigenvector $\y_j$ associated with $\la_j$ and the projection $\Proj_{\bfu_{b(j)}^{a(j)}}(\y_{j})$
of  the vector $\y_{j}$ onto the linear space generated by $\bfu_{b(j)}^{a(j)}$
satisfy for fixed $\delta>0$:
\begin{equation*}
\begin{split}
\limsup_{\nto} \P\big(\ltwonorm{\y_j-\Proj_{\bfu_{b(j)}^{a(j)}}(\y_{j})}>\delta\big) &
\le \limsup_{\nto} \P(\{\ltwonorm{\y_j-\Proj_{\bfu_{b(j)}^{a(j)}}(\y_{j})}>\delta\} \cap \Omega_n \cap A_n)\\
&\quad + \limsup_{\nto}\P((\Omega_n \cap A_n)^c)\\
&\le \limsup_{\nto} \P(\{2\vep^{(n)}/(\xi\,\vep^{(n)}-\vep^{(n)})>\delta\} \cap \Omega_n\cap A_n)\\
&\le \limsup_{\nto} \P(\{2/(\xi-1)>\delta\})=  \1_{\{ 2/(\xi-1)>\delta\}}.
\end{split}
\end{equation*}
The right-hand side is zero for sufficiently large $\xi$. Since both $\y_j$ and $\bfu_{b(j),a(j)}$ are unit vectors 
and $\ltwonorm{\bfP_{\bfu_{b(j)}^{a(j)}}(\y_{j})}\le 1$, this means that
$\ltwonorm{\y_j - \bfu_{b(j)}^{a(j)}} \cip 0\,.$ The proof is complete.

\subsection{Preliminaries for the proof of Theorem~\ref{thm:mainn}}\label{sec:proof1}

To handle the case of an infinite filter $(h_{kl})$ we introduce a truncation of the matrix $\bfM(s)$. For $s,m\ge 0$, define the $(2m+1)\times (2m+1)$ matrix 
\begin{equation*}
\bfM^{(m)}(s)= \Big( \sum_{l=0}^m h_{il} h_{j,l+s} \Big)_{i,j=-m,\ldots,m}\,
\end{equation*}
with  rank $r_m(s)$  and ordered singular values
\beam\label{eq:v11}
v_1^{(m)}(s)\ge\cdots \ge v_{2m+1}^{(m)}(s)\,.
\eeam
\begin{remark}\rm 
In the case of a finite filter, the equality
\begin{equation*}
\bfM(s) =
\begin{pmatrix}
\zero & & \\
& \bfM^{(m)}(s) &\\
& & \zero
\end{pmatrix}\,
\end{equation*}  
holds for a sufficiently large $m$. Also note that then $v_j^{(m)}(s)=v_j(s),j\le r(s)=r_m(s)$ with $v_j(s)$ defined below \eqref{eq:v1}. Therefore our notation in \eqref{eq:v11} is consistent with \eqref{eq:v1}. 
\end{remark}

In analogy to \eqref{eq:matrixm}, we embed the small matrix $\bfM^{(m)}(s)$ into $p\times p$ matrices 
$\bfM_{a}^{(m)}(s) =(\bfM^{(m)}_{a}(s)_{ij})_{i,j=1,\ldots,p}$, $a\in\Z$, which we define by
\begin{eqnarray}\label{eq:matrixmtr}
{\bfM}_{a}^{(m)}(s)_{ij}= \left\{\begin{array}{ll}
{\bfM}^{(m)}(s)_{i-a,j-a}\,,&i,j=(a-m) \vee 1,\ldots,(a+m)\wedge p\,,\\
0\,,&\mbox{otherwise.}
\end{array}\right.
\end{eqnarray}
We note that, for $m+1\le a\le p-m$, $\bfM_a^{(m)}(s)$ has rank $r_m(s)$ and the same non-zero singular values as $ \bfM^{(m)}(s)$.
\par
The following result is key to the proof of Theorem~\ref{thm:mainn}. 
\begin{theorem}\label{prop:main}
Consider the linear process \eqref{eq:1} under the assumptions of Theorem~\ref{thm:mainn}.
Then the following statement holds for $s\ge 0$ and any integer sequence $k=k_p\to \infty$ such that $k_p^2=o(p)$:
\begin{equation}\label{eq:appr1}
\lim_{m\to \infty} \limsup_{\nto} \P \Big( a_{np}^{-2} \Big\|  \bfC_n(s)- \sum_{i=1}^k D_{(i)} \bfM_{L_i}^{(m)}(s) \Big\|_2 >\vep  \Big)=0\,, \quad \vep >0\,.
\end{equation}
\end{theorem}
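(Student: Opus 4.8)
\emph{Strategy.} The plan is to prove that, after normalization by $a_{np}^2$, the matrix $\bfC_n(s)$ is close in spectral norm to the block matrix $\sum_{a=1}^p D_a\bfM_a^{(m)}(s)$, and that only the $k$ blocks with the largest $|D_a|$ survive. The heuristic is a bookkeeping of the noise: expanding $(\bfC_n(s))_{ij}=\sum_{t=1}^n X_{it}X_{j,t+s}$ through \eqref{eq:1}, a single squared variable $Z_{ab}^2$ enters $(\bfC_n(s))_{ij}$ with weight $\sum_{l} h_{i-a,l}h_{j-a,l+s}=(\M(s))_{i-a,j-a}$, once we sum over $t\in\{1,\dots,n\}$ and disregard the $O(m)$ boundary values of $t$; summing these squares over the time coordinate $b$ produces $\sum_b Z_{ab}^2=D_a$ and hence the block $D_a\,\bfM_a(s)$ anchored at row $a$. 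I would therefore split
\begin{equation*}
\bfC_n(s)-\sum_{i=1}^k D_{(i)}\bfM_{L_i}^{(m)}(s)=E_1+E_2,\qquad E_1=\bfC_n(s)-\sum_{a=1}^p D_a\bfM_a^{(m)}(s),\quad E_2=\sum_{i=k+1}^p D_{(i)}\bfM_{L_i}^{(m)}(s),
\end{equation*}
using $\sum_{a=1}^p D_a\bfM_a^{(m)}(s)=\sum_{i=1}^p D_{(i)}\bfM_{L_i}^{(m)}(s)$ (reindexing by decreasing $|D_a|$), and bound each summand in $a_{np}^{-2}\twonorm{\cdot}$.

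\emph{The cross terms (main obstacle).} The error $E_1$ collects three contributions: the genuine cross terms (products $Z_{i-k,t-l}Z_{j-k',t+s-l'}$ of two \emph{distinct} noise variables), the filter truncation $\M(s)\to\bfM^{(m)}(s)$, and the time-boundary corrections. The cross terms are the heart of the matter. Here I would truncate the noise at level $\vep a_{np}$, writing $Z=\check Z+\hat Z$ with $\check Z=Z\1_{\{|Z|\le \vep a_{np}\}}$, and split the cross-term matrix accordingly. The part built only from the small noise $\check Z$ is finite-variance; its centered spectral norm is estimated through a high trace moment $\E\,\tr[(BB')^{q}]$ of the small-noise cross-term matrix $B$ (a Frobenius bound is too crude to reach the spectral edge), which after dividing by $a_{np}^{4q}$ tends to $0$ when first $\nto$ and then $\vep\to0$. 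This is the technically heaviest step, where the growth condition \eqref{eq:p} and the summability \eqref{eq:2a} enter, and it follows the spectral-norm estimates for sample covariance matrices of linear processes in \cite{davis:mikosch:pfaffel:2016,davis:heiny:mikosch:xie:2016}. Because $\P(|Z|>\vep a_{np})\sim \vep^{-\alpha}(np)^{-1}$, the large noise $\hat Z$ produces only $O_\P(1)$ exceedances, so cross products of two distinct large jumps and of a large jump with the small noise form low-rank pieces that a \ld\ argument for heavy-tailed sums keeps below the scale $a_{np}^2$; the only surviving large-noise contribution is the diagonal already accounted for in $\sum_{a=1}^p D_a\bfM_a(s)$. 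In the regime $\alpha>2(1+\beta)$ the subtracted centering $\E[\X_n(0)\X_n(s)']$ and the $-n\E[Z^2]$ inside $D_i$ are precisely what cancel the mean of the small-noise matrix and the banded-Toeplitz mean term of order $n$ that would otherwise dominate $a_{np}^2$; this is the balance that fixes the threshold $2(1+\beta)$.

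\emph{Filter truncation and boundary.} The remaining two pieces of $E_1$ are softer. Replacing $\M(s)$ by $\bfM^{(m)}(s)$ alters each block by a matrix with entries $\sum_{l>m}h_{i,l}h_{j,l+s}$ together with the rows and columns indexed by $|i|\vee|j|>m$, and $\frobnorm{\M(s)-\bfM^{(m)}(s)}\to0$ as $m\to\infty$ by \eqref{eq:2a}; weighting by the $D_a$'s, using a band-matrix row-sum (Schur) bound and normalizing by $a_{np}^2$, this error vanishes in the order $\lim_{m}\limsup_{n}$. The boundary correction involves only the $O(m)$ time coordinates $b$ near $1$ and $n$ per row, i.e.\ $O(mp)$ squared-noise terms out of $O(np)$; a \ld\ bound shows their weighted sum is $o_\P(a_{np}^2)$. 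This is exactly why the statement is phrased with the double limit $\lim_{m\to\infty}\limsup_{\nto}$.

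\emph{The tail beyond the top $k$.} For $E_2$ I would work on the set $A_n$ of \eqref{eq:setA}, which has probability tending to $1$. Since $\sum_{a=1}^p D_a\bfM_a^{(m)}(s)$ has bandwidth $2m$, the Schur/Gershgorin bound gives $\twonorm{E_2}\le c_m\,|D_{(k+1)}|$ for a constant $c_m$ depending only on $m$ and the finite quantity $\max_i\sum_j|(\bfM^{(m)}(s))_{ij}|$, because every row of $E_2$ draws its weights from $D_a$ with $a\notin\{L_1,\dots,L_k\}$, hence $|D_a|\le|D_{(k+1)}|$. As $D_i$ is \regvary\ of index $\alpha/2$ with $\P(D_i>x)\sim n\,\P(Z^2>x)$, the order statistics satisfy $a_{np}^{-2}|D_{(k+1)}|\cip0$ whenever $k=k_p\to\infty$; thus for each fixed $m$ we get $\limsup_n\P(a_{np}^{-2}\twonorm{E_2}>\vep)=0$, and letting $m\to\infty$ keeps it $0$. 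Combining the bounds on $E_1$ and $E_2$ yields \eqref{eq:appr1}. I expect the small-noise spectral-norm estimate in the second step — separating small and large jumps and pushing a trace moment past the spectral edge — to be the crux; the filter-truncation, boundary, and tail estimates are routine heavy-tail and band-matrix computations.
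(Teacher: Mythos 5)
Your proposal follows essentially the same route as the paper's proof: the paper decomposes the error into (i) removal of the cross terms $Z_{i-k_1,t-l_1}Z_{j-k_2,t+s-l_2}$ with distinct noise indices (Proposition~\ref{prop:withoutQ}, deferring the hard spectral-norm estimate to the truncation/trace-moment techniques of \cite{heiny:mikosch:2017:iid}), (ii) truncation of the filter to $|k|\vee|l|\le m$, which is what forces the double limit $\lim_m\limsup_n$ (Lemma~\ref{lem:Qm}), (iii) the time-boundary correction turning the remaining sums of squares into $\sum_a D_a\bfM_a^{(m)}(s)$ (Lemma~\ref{lem:1}), and (iv) discarding the blocks with indices outside $\{L_1,\dots,L_k\}$ on the event $A_n$ via the band structure and $a_{np}^{-2}|D_{(k+1)}|\cip 0$ (Lemma~\ref{lem:2}) — exactly the four contributions you identify in $E_1$ and $E_2$, handled by the same tools. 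Your sketch is correct and matches the paper's argument in both structure and the points where the technical weight is carried.
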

We provide the proof of Theorem~\ref{prop:main} in Section~\ref{subsec:Thm6.2} after the proof of Theorem~\ref{thm:mainn}.
\begin{remark}\label{rem:beta>1}{\em
It is possible % -- and in principle not difficult -- 
 to present most of our results on eigenvalues also for $\beta>1$. The main idea is to use 
the fact that the non-zero eigenvalues of the matrices $\X(0)\X(s)'$ and $\X(s)'\X(0)$ are the same. 
However, the statement of the theorems alone would require a significant amount of additional notation
and therefore we restrict ourselves to $\beta\in [0,1]$.
 As an illustration we formulate an analog of Theorem~\ref{prop:main} where we now assume that $\beta>1$. 
Then if $\alpha< 2(1+\beta^{-1})$, we have
\begin{equation}\label{eq:appr1a}
\lim_{m\to \infty} \limsup_{\nto} \P \Big( a_{np}^{-2} \Big\| \X(s)' \X(0)- \sum_{i=1}^k D^{\downarrow}_{(i)} \bfM_{L^{\downarrow}_i}^{\downarrow,(m)}(s) \Big\|_2 >\vep  \Big)=0\,, \quad \vep >0\,,
\end{equation}
where $\bfM(s)^{\downarrow}=\bfH(s)' \bfH(0)$ and 
\begin{equation*}
D^{\downarrow}_{(1)}=D^{\downarrow}_{L^{\downarrow}_1} \ge \cdots \ge D^{\downarrow}_{(n)}=D^{\downarrow}_{L^{\downarrow}_n}
\end{equation*}
are the order statistics of the column-sums
\begin{equation*}
D_t^\downarrow=D_t^{(n),\downarrow}= \sum_{i=1}^p Z_{it}^2\,,\qquad t=1,\ldots,n;\; \quad p=1,2,\ldots\,
\end{equation*}
\par
%{\red We also mention that all results for sample autocovariance matrices 
%can be extended to the 
%case of \regvary\ noise $(Z_{it})$ when $\alpha\in (4,8)$. 
%However, in this case all quantities $D_i$ and the approximated eigen- and sing%ular values need to be centered. This is analogous to sample covariance matrice%s 
%for $\alpha\in (2,4)$; we refer to
%\cite{davis:heiny:mikosch:xie:2016,davis:mikosch:pfaffel:2016,heiny:mikosch:201%7:iid} to get an impression of the techniques to be used in this case. We omit further details.}}
%We restrict ourselves to the case $\beta \in [0,1]$.
}
\end{remark}

%--------------------------------------------------
\subsection{Proof of Theorem~\ref{thm:mainn}}\label{subsec:proofthm:mainn}

Let $0\le s_1\le s_2<\infty$ and $s\ge 0$. First, we derive an approximation of $\bfP(s_1,s_2)$.
Let $k=k_p\to \infty$ be an integer sequence such that $k_p^2=o(p)$.
On $A_n$ defined in \eqref{eq:setA}, the matrix 
$\sum_{i=1}^k D_{(i)} \bfM_{L_i}^{(m)}(s)$ is block diagonal and therefore
\begin{equation*}
\begin{split}
\Big(\sum_{i=1}^k D_{(i)} \bfM_{L_i}^{(m)}(s)\Big)\Big(\sum_{i=1}^k D_{(i)} \bfM_{L_i}^{(m)}(s)\Big)'
&= \sum_{i=1}^k D_{(i)}^2  \bfM_{L_i}^{(m)}(s) \bfM_{L_i}^{(m)}(s)'
= \sum_{i=1}^k D_{(i)}^2  \bfK_{L_i}^{(m)}(s,s) 
\end{split}
\end{equation*}	
remains block diagonal; see also \eqref{eq:nonoverlap}. 
Here
\begin{equation*}
\bfK_{L_i}^{(m)}(s_1,s_2) = \sum_{s=s_1}^{s_2} \bfM_{L_i}^{(m)}(s) \bfM_{L_i}^{(m)}(s)'\,, \quad i=1,\ldots,k\,.
\end{equation*}  
We observe that for any fixed $x>0$, with $c(m)= \|\bfM_{L_1}^{(m)}(s)\|_2$,
\beao 
\lefteqn{\P\Big(a_{np}^{-2} \Big\| \sum_{i=1}^k D_{(i)} \bfM_{L_i}^{(m)}(s)  \Big\|_2
>x\Big)}\\
&\le &\P\Big(A_n\cap\Big\{a_{np}^{-2} \Big\| \sum_{i=1}^k D_{(i)} \bfM_{L_i}^{(m)}(s)  \Big\|_2
>x\Big\}\Big)+ \P(A_n^c)\\
&=&\P\Big(A_n\cap\Big\{a_{np}^{-2} D_{(1)} c(m) >x\Big\}\Big)+o(1)\\
&\le &\P(a_{np}^{-2} D_{(1)} c(m)  >x)+o(1)\,.
\eeao
We know from \cite{heiny:mikosch:2017:iid} that 
\beam\label{eq:frechet}
a_{np}^{-2}\,D_{(1)} \std \Gamma_1^{-\alpha/2}\,,
\eeam
where the right-hand variable is Fr\'echet $\Phi_{\alpha/2}$-distributed. 
Moreover, by \eqref{eq:2a},
we have $\limsup_{m\to\infty}c(m)<\infty$.
Fix any $\delta\in (0,1)$. Then we can find a constant $x_0$ \st\
\beam\label{eq:tighta}
\lim_{m\to\infty} \limsup_{\nto} \P\Big(a_{np}^{-2} \Big\| \sum_{i=1}^k D_{(i)} \bfM_{L_i}^{(m)}(s)  \Big\|_2
>x_0\Big)<\delta\,.
\eeam
In view of  Theorem~\ref{prop:main} we can choose $x_0$ \st\
\beam\label{eq:tightb}
\limsup_{\nto}\P(a_{np}^{-2} \twonorm{\bfC_n(s)} >x_0)<\delta\,.
\eeam
Now applications of Theorem~\ref{prop:main},
the triangle inequality and the tightness relations \eqref{eq:tighta}
and \eqref{eq:tightb} yield
%\begin{equation*}
%a_{np}^{-2} \twonorm{\bfC(s)} \quad \text{ and } 
%\quad a_{np}^{-2} \Big\| \sum_{i=1}^k D_{(i)} \bfM_{L_i}^{(m)}(s)  \Big\|_2
%\end{equation*}
%converge in distribution to the same Fr\'{e}chet-type random variable with parameter $\alpha/2$ yields the following approximation of $\bfP(s,s)$:

\begin{equation*}
\lim_{m\to \infty} \limsup_{\nto} \P \Big( a_{np}^{-4} \Big\| \bfP(s,s)- \sum_{i=1}^k D_{(i)}^2 \bfK_{L_i}^{(m)}(s,s) \Big\|_2 >\vep  \Big)=0\,, \quad \vep >0\,.
\end{equation*}
Summation over $s$ gives
\begin{equation}\label{eq:appr123}
\lim_{m\to \infty} \limsup_{\nto} \P \Big( a_{np}^{-4} \Big\| \bfP(s_1,s_2)- \sum_{i=1}^k D_{(i)}^2 \bfK_{L_i}^{(m)}(s_1,s_2) \Big\|_2 >\vep  \Big)=0\,, \quad \vep >0\,,
\end{equation}
which is the analog of \eqref{eq:appfinite1} in the case of an infinite filter.
\par

On $A_n$, the eigenvalues of $\sum_{i=1}^k D_{(i)} \bfK_{L_i}^{(m)}(s_1,s_2)$ are the $p$ largest values in the set
\begin{equation}\label{eq:drghrf}
\{D_{L_i}^2 (v_j^{(m)}(s_1,s_2))^2=D_{(i)}^2 (v_j^{(m)}(s_1,s_2))^2:  i=1,\ldots,k, j=1,\ldots,2m+1\}\cup\{0\}\,,
\end{equation}
where $(v_1^{(m)}(s_1,s_2))^2\ge \cdots \ge (v_{2m+1}^{(m)}(s_1,s_2))^2$ are the largest eigenvalues of $\bfK_{L_1}^{(m)}(s_1,s_2)$.

Because of $a_{np}^{-2} \Dr_{(k)}\cip 0$  (see \cite{heiny:mikosch:2017:iid}) we can write $i=1, \ldots, p$ in \eqref{eq:drghrf}.
The corresponding largest $p$ ordered values of them are denoted by
$\gamma_{1}^{(m)}(s_1,s_2)\ge \cdots\ge \gamma_{p}^{(m)}(s_1,s_2)$.
Combining \eqref{eq:appr123} with Weyl's eigenvalue perturbation inequality (see Bhatia \cite{bhatia:1997}) and recalling that $\P(A_n^c)\to 0$ as $\nto$, we have
\begin{equation}
\lim_{m\to\infty}\limsup_{\nto}\P\Big(a_{np}^{-4}
\max_{i\le p}\Big|
\lambda_{i}(s_1,s_2)-\gamma_{i}^{(m)}(s_1,s_2)\Big|>\vep\Big)=0\,,\quad \vep>0\,.
\end{equation}
Finally, we observe that
\begin{equation*}
a_{np}^{-4}\max_{i\le p} \big|\gamma_{i}^{(m)}(s_1,s_2)-\gamma_{i}(s_1,s_2) \big|\le
a_{np}^{-4}\max_{i\le p} D_i^2\; \max_{i\le p}\big|(v_i^{(m)}(s_1,s_2))^2-v_i^2 (s_1,s_2)\big|=o_\P(1) \,,
\end{equation*}
since  \eqref{eq:frechet} holds
%$a_{np}^{-2}\max_{i\le p} D_i \cid \Gamma_1^{-2/\alpha}$
and $(v_i^{(m)}(s_1,s_2))^2 \to v_i^2 (s_1,s_2)$ uniformly in $i$ because both sequences are monotone. This proves \eqref{eq:mainn1}.

The additional step in the case $\alpha < 2(1+\beta)$
 is to replace $D_{(i)}$ by $Z_{(i),np}^2$. We see that
\begin{equation*}
\max_{j\in \N} v_j(s_1,s_2) \, \max_{i\le p} a_{np}^{-2}\big|D_{(i)}-Z_{(i),np}^2 \big|= c a_{np}^{-2}\big|D_{(i)}-Z_{(i),np}^2 \big| =o_\P(1),
\end{equation*}
as shown in \cite{heiny:mikosch:2017:iid}. This proves \eqref{eq:mainn2} and finishes the proof of Theorem~\ref{thm:mainn}.

%--------------------------------------------------------------
\subsection{Proof of Theorem~\ref{prop:main}}\label{subsec:Thm6.2}

For the ease of presentation we assume $h_{kl}=0$ if $\min(k,l)<0$; the extension to general two-sided filters is straightforward. Note that then the entries of the matrix $\bfM^{(m)}(s)$ are zero outside a block of size $(m+1) \times (m+1)$. 

We use the notation 
\begin{eqnarray}\label{eq:Ztilde}
\Zt_{it} =
\left\{\begin{array}{ll}
Z^2_{it} \,, & \mbox{if } \alpha<2(1+\beta), \\
Z^2_{it} - \E[Z^2]  \,, & \mbox{if } \alpha>2(1+\beta),
\end{array}\right.\qquad i,t\in \Z\,.
\end{eqnarray}\noindent
\subsection*{Reduction of $\bfC_n(s)$ to the matrix of the sums of squares}\label{sec:proofs}
The following matrix contains all squared elements of $\bfC_n(s)$ for $s\ge 0$:
%\beao
%\bfQr_n(s)_{ij}=\sum_{k=0}^\infty
%\sum_{l=0}^\infty h_{kl}h_{k+j-i,l+s}\sum_{t=1}^n Z_{i-k,t-l}^2 \,.
%\eeao
\begin{equation*}
\bfQr_n(s)_{ij}=\sum_{l=0}^\infty
\sum_{k=0}^\infty h_{k,l}h_{k+j-i,l+s}\sum_{t=1}^n \Zt_{i-k,t-l}\,.
\end{equation*}
Our goal is to show that the \asy\ properties of the singular values of $\bfC_n(s)$ are
determined by $\bfQr_n(s)$. 
\begin{proposition}\label{prop:withoutQ}
Assume the conditions of Theorem~\ref{thm:mainn} and $s\ge 0$. Then  we have
\beao
a_{np}^{-2} \twonorm{\bfC_n(s) - \bfQr_n(s)} \cip 0\,,\qquad\nto\,.
\eeao
\end{proposition}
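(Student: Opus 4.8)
The plan is to recognise $\bfC_n(s)-\bfQr_n(s)$ as the collection of all \emph{cross terms} in the expansion of $\bfC_n(s)$, namely those products of two \emph{distinct} — hence independent — entries of the noise field, while $\bfQr_n(s)$ retains exactly the \emph{diagonal} terms, in which the two factors carry the same index and combine into $Z^2$ (or, after the mean is subtracted, into $\Zt$). Concretely, expanding $(\X_n(0)\X_n(s)')_{ij}=\sum_{t=1}^{n}X_{it}X_{j,t+s}$ by means of \eqref{eq:1} and collecting the summands for which $Z_{i-k,t-l}=Z_{j-k',t+s-l'}$, i.e. $k'=k+j-i$ and $l'=l+s$, reproduces $\bfQr_n(s)_{ij}$; when $\alpha>2(1+\beta)$ the centering $\E[\X_n(0)\X_n(s)']$ cancels precisely the expectation of this diagonal part (the cross terms having mean zero since $\E[Z]=0$ in that regime), replacing $Z^2$ by $\Zt$. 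I also record at the outset that \eqref{eq:2a} with $\delta<1$ yields $\sum_{k,l}|h_{kl}|<\infty$, so that the filter is absolutely summable and all the interchanges of summation below are legitimate.

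Writing $\bfR_n(s):=\bfC_n(s)-\bfQr_n(s)$ and passing to the noise indices, the remainder factorises along the filter lags as
\begin{equation*}
\bfR_n(s)=\sum_{k,l\ge0}\sum_{k',l'\ge0}h_{kl}\,h_{k'l'}\,\bfB_n^{k,l,k',l'}(s)\,,
\end{equation*}
where $\bfB_n^{k,l,k',l'}(s)$ has $(i,j)$ entry $\sum_{t=1}^{n}Z_{i-k,t-l}\,Z_{j-k',t+s-l'}$ with the diagonal pairs $(i-k,t-l)=(j-k',t+s-l')$ removed. Each $\bfB_n^{k,l,k',l'}(s)$ is, up to fixed row/column shifts and negligible boundary corrections, the off-diagonal part of a lagged sample autocovariance matrix of the \emph{iid} field $(Z_{it})$. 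I would establish the claim by the standard $\lim_{m\to\infty}\limsup_{\nto}$ scheme: for the finite filter $k\vee l\vee k'\vee l'\le m$ the sum is finite, so it suffices that $a_{np}^{-2}\twonorm{\bfB_n^{k,l,k',l'}(s)}\cip0$ for each individual shift; the tail $m\to\infty$ is then absorbed by bounding $a_{np}^{-2}\twonorm{\bfB_n^{k,l,k',l'}(s)}$ uniformly in the shifts by an $O_\P(1)$ quantity (the spectral norm of the full shifted product is dominated by the largest row sums of $Z^2$, essentially independent of the shift) and summing against the weights $|h_{kl}|\,|h_{k'l'}|$, whose tail vanishes by \eqref{eq:2a}.

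The crux, and the step I expect to be hardest, is the single-shift iid estimate $a_{np}^{-2}\twonorm{\bfB_n(s)}\cip0$. A second–moment (equivalently Frobenius) bound does \emph{not} suffice here: one checks $\E\frobnorm{\bfB_n(s)}^2\asymp p^{2}n$, which fails to be $o(a_{np}^4)$ when $\alpha$ is close to $4$ and $\beta$ close to $1$. The sharp bound instead comes from the moment method applied to $\E\,\tr\big((\bfB_n\bfB_n')^{q}\big)$ with $q$ growing, together with a truncation of the noise at level $\vep a_{np}$. By \eqref{eq:27} the number of field entries exceeding $\vep a_{np}$ is $O_\P(1)$, so the cross terms carrying a large factor form a sparse perturbation whose spectral norm is controlled by individual large values (of order $a_{np}^2$) damped by the small co-factors, and is driven to $0$ on letting $\vep\downarrow0$; for the bounded small–small part the high trace moments are tamed by the slowly growing \emph{truncated} moments of $Z^2$ (finite since $\alpha<4$) and by the mean-zero cancellation available when $\alpha>1$. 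Since this is exactly the type of estimate obtained for the iid field in \cite{heiny:mikosch:2017:iid}, I would invoke and adapt it from there, leaving as the genuinely new bookkeeping only the control of the boundary discrepancy between $\sum_{t}\Zt_{a,t-l}$ and $\sum_{t}\Zt_{a,t}$ and, in the uncentered regime $\alpha\le1$ where no mean-zero cancellation is available, the verification that the sparse large-value mechanism alone already yields the bound.
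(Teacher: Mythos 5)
Your proposal is correct and follows essentially the same route as the paper: both decompose $\bfC_n(s)-\bfQr_n(s)$ along the filter lags into shifted off-diagonal (cross-term) matrices of the iid field, reduce the single-shift spectral-norm estimate to the techniques of Theorem~5.1 in \cite{heiny:mikosch:2017:iid}, and conclude by summing against the absolutely summable weights $|h_{k_1,l_1}h_{k_2,l_2}|$ guaranteed by \eqref{eq:2a}. The extra detail you supply on why a Frobenius bound fails and on the uniform-in-shift control of the tail of the filter sum is a welcome elaboration of a step the paper leaves terse, not a different argument.
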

Proposition \ref{prop:withoutQ} has the interpretation that the squared $Z$'s with the heaviest tails dominate the spectral behavior of $\bfC_n(s)$. For a more detailed explanation involving large deviation theory we refer to the comments below Proposition 2.2 in \cite{davis:heiny:mikosch:xie:2016}.
\begin{proof}%[Proof of Proposition~\ref{prop:withoutQ}]
First, we observe that
\begin{equation*}
\begin{split}
\bfC_n(s) - \bfQr_n(s) &= \X(0)\X(s)'-\Big(\sum_{l=0}^\infty \sum_{k=0}^\infty  h_{kl}h_{k+j-i,l+s}\sum_{t=1}^n  Z_{i-k,t-l}^2 \Big)_{ij}\\
&= \Big( \sum_{l_1,l_2=0}^\infty
\sum_{k_1,k_2=0}^\infty h_{k_1,l_1}h_{k_2,l_2}\sum_{t=1}^n
Z_{i-k_1,t-l_1} Z_{j-k_2,t+s-l_2} \1_{\{ l_2=l_1+s, i-k_1=j-k_2  \}^c}\Big)_{ij}\,.
\end{split}
\end{equation*}
Here the indicator refers to the index set for which  $l_2=l_1+s, i-k_1=j-k_2$ does not hold.
%Therefore $\X(0) \X(s)' - \bfQr(s) = \bfR(s)$. 
For $l_1,l_2,k_1,k_2\ge 0$, we define the $p\times p$ matrices $\bfN_{l_1,l_2,k_1,k_2}(s)$,
\begin{equation*}
\big(\bfN_{l_1,l_2,k_1,k_2}(s) \big)_{i,i-k_1+k_2} = \1_{\{l_2=l_1+s\}} \sum_{t=1}^n  Z_{i-k_1,t-l_1}^2\,, \quad i= 1+(-k_1+k_2)_- , \ldots, p-(-k_1+k_2)_+\,,
\end{equation*}
all other entries being zero,
and the $p\times n$ matrices
\beao
\z(l,k)=\z_n(l,k)=(Z_{i-l,t-k})_{i=1,\ldots,p;t=1,\ldots,n}\,, \quad l,k\in\Z\,.
\eeao
We have for $i,j=1,\ldots,p$,
\beao%\label{eq:Ygsrg}
\Big(\z(k_1,l_1)\z(k_2,l_2-s)'- \bfN_{l_1,l_2,k_1,k_2}(s)\Big)_{ij} = \sum_{t=1}^n Z_{i-k_1,t-l_1} Z_{j-k_2,t+s-l_2}\1_{\{ l_2=l_1+s, i-k_1=j-k_2  \}^c}\,
\eeao
and therefore
\begin{equation}\label{eq:ksdfoks1}
\bfC_n(s) - \bfQr_n(s)=\sum_{l_1,l_2, k_1,k_2=0}^\infty h_{k_1,l_1}h_{k_2,l_2} \Big(\z(k_1,l_1)\z(k_2,l_2-s)'- \bfN_{l_1,l_2,k_1,k_2}(s)\Big)\,.
\end{equation}

Using the techniques from the proof of Theorem 5.1 in \cite{heiny:mikosch:2017:iid}, one obtains
\begin{equation}\label{eq:ksdfoks2}
a_{np}^{-2} \Big\|\z(k_1,l_1)\z(k_2,l_2-s)'- \bfN_{l_1,l_2,k_1,k_2}(s)\Big\|_2 \cip 0\,, \quad \nto\,, k_i,l_i\ge 0\,.
\end{equation}
In view of \eqref{eq:ksdfoks1},\eqref{eq:ksdfoks2} and the fact that 
\begin{equation*}
\sum_{l_1,l_2, k_1,k_2=0}^\infty |h_{k_1,l_1}h_{k_2,l_2}|< \infty\,,
\end{equation*}
the proof is complete.

\end{proof}

\begin{comment}
%We introduce the index set 
%\begin{equation*}
%S= \{ (l_1,l_2,k_1,k_2) : l_1,l_2,k_1,k_2 \in N_0,  l_2\neq l_1+s  \} \cup \{ (l,l,k,k): l,k\in \N_0  \}
%\end{equation*}
Choose $\gamma=1$ or $<\alpha/2$ according as $\alpha \in (2,4)$ or $\alpha\in (0,2]$, respectively.
Then we have for $\vep>0$,
\begin{equation*}
\begin{split}
\P( & \twonorm{\bfC_n(s) - \bfQr_n(s)}>\vep \, a_{np}^{2})\\
& \le c a_{np}^{-2\gamma}\E\Big[ \Big( \sum_{l_1,l_2, k_1,k_2=0}^\infty  |h_{k_1,l_1}h_{k_2,l_2}| \twonorm{\z(k_1,l_1)\z(k_2,l_2-s)'- \bfN_{l_1,l_2,k_1,k_2}(s)}  \Big)^\gamma \Big]\\
&\le c \sum_{l_1,l_2, k_1,k_2=0}^\infty  |h_{k_1,l_1}h_{k_2,l_2}|^\gamma a_{np}^{-2 \gamma} \E\Big[\twonorm{\z(k_1,l_1)\z(k_2,l_2-s)'- \bfN_{l_1,l_2,k_1,k_2}(s)}^\gamma \Big]\to 0, \quad \nto\,,
\end{split}
\end{equation*}
{\blue We have to give more details why this is uniform.} {\red Use Lemma~\ref{lem:uniformint}.}
Using the same techniques as for the proofs of Lemma~4.1 and Theorem~5.1 in \cite{heiny:mikosch:2017:iid}, 
%(cf. Theorem~\ref{thm:iidjohannes} below) we obtain that {\blue we need uniformity}
%\begin{equation}\label{eq:sgsegklfn}
%a_{np}^{-2} \twonorm{\z(k_1,l_1)\z(k_2,l_2-s)'- \bfN_{l_1,l_2,k_1,k_2}(s)} \cip 0\,.
%\end{equation}
%From \eqref{eq:sgsegklfn} and uniform integrability of the sequence $(a_{np}^{-2\gamma} \twonorm{\z(k_1,l_1)\z(k_2,l_2-s)'- N_{l_1,l_2,k_1,k_2}^{(s)}}^\gamma)$, we get
one shows that the expected values on the \rhs\ converge to zero uniformly for $l_i,k_i\ge 0$, $i=1,2$.
%\begin{equation}\label{eq:sgs}
%\lim_{\nto} a_{np}^{-2 \gamma} \E\Big[\twonorm{\z(k_1,l_1)\z(k_2,l_2-s)'- \bfN_{l_1,l_2,k_1,k_2}^{(s)}}^\gamma \Big] = 0\,.
%\end{equation}
%where we used \eqref{eq:sgs}.
\end{comment}

%-----------------------------------------------------------------------
\subsection*{Truncation of $\bfQr_n(s)$} In this step we show that it suffices to truncate the infinite series of 
the entries of ${\bfQr}_n(s)$. 
For $m\ge 1$, define
\begin{equation*}
\bfQrm_n(s)_{ij}=\sum_{l=0}^m
\sum_{k=0}^m h_{k,l}h_{k+j-i,l+s}\sum_{t=1}^n \Zt_{i-k,t-l}\,.
\end{equation*}

\begin{lemma} \label{lem:Qm}
Assume the conditions of Theorem~\ref{thm:mainn} and $s\ge 0$.
Then
\begin{equation*}
\lim_{m\to\infty} \limsup_{\nto} \P\big(a_{np}^{-2}\big\| \bfQr_n(s)- \bfQrm_n(s) \big\|_2>\vep\big)=0\,,\quad \vep>0\,.
\end{equation*}
\end{lemma}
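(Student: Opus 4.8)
The plan is to expand the difference entrywise, recognise it as a filtered family of sums of (centered) squares, and then control it by a moment bound of exactly the kind sketched (but commented out) for Proposition~\ref{prop:withoutQ}. Set $\sigma_{a}^{(l)}=\sum_{t=1}^n\Zt_{a,t-l}$. Substituting $a=i-k$ in the defining series gives, for all $i,j$,
\begin{equation*}
\big(\bfQr_n(s)-\bfQrm_n(s)\big)_{ij}=\sum_{(k,l)\notin\{0,\ldots,m\}^2} h_{k,l}\,h_{k+j-i,l+s}\,\sigma_{i-k}^{(l)}\,,
\end{equation*}
hence $\bfQr_n(s)-\bfQrm_n(s)=\sum_{(k,l)\notin\{0,\ldots,m\}^2}h_{k,l}\,\mathbf V_{k,l}(s)$ with $(\mathbf V_{k,l}(s))_{ij}=h_{k+j-i,l+s}\,\sigma_{i-k}^{(l)}$. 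I would factor $\mathbf V_{k,l}(s)=\mathbf\Sigma_{k,l}\,\mathbf G_{l,s}$, where $\mathbf\Sigma_{k,l}=\diag(\sigma_{i-k}^{(l)})_{i\le p}$ and $(\mathbf G_{l,s})_{ij}=h_{k+j-i,l+s}$ is of Toeplitz type. The absolute row and column sums of $\mathbf G_{l,s}$ are bounded by $c_{l+s}:=\sum_{k'\ge 0}|h_{k',l+s}|$, so the standard operator-norm bound gives $\twonorm{\mathbf G_{l,s}}\le c_{l+s}$, while $\twonorm{\mathbf\Sigma_{k,l}}=\max_i|\sigma_{i-k}^{(l)}|$. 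Submultiplicativity then yields the deterministic bound $\twonorm{\mathbf V_{k,l}(s)}\le c_{l+s}\max_i|\sigma_{i-k}^{(l)}|$.

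Next I would apply Markov's inequality with an exponent $\gamma$ chosen as in that computation: $\gamma=1$ if $\alpha\in(2,4)$, and $\delta\le\gamma<\alpha/2$ if $\alpha\in(0,2]$. Using the triangle inequality and subadditivity of $x\mapsto x^\gamma$ for $\gamma\le 1$,
\begin{equation*}
\P\big(\twonorm{\bfQr_n(s)-\bfQrm_n(s)}>\vep\,a_{np}^{2}\big)\le \vep^{-\gamma}a_{np}^{-2\gamma}\sum_{(k,l)\notin\{0,\ldots,m\}^2}|h_{k,l}|^{\gamma}\,c_{l+s}^{\gamma}\,\E\big[\big(\max_i|\sigma_{i-k}^{(l)}|\big)^{\gamma}\big]\,.
\end{equation*}
For each fixed $(k,l)$, stationarity of the iid field makes $(\sigma_{i-k}^{(l)})_{i\le p}\eid(D_i)_{i\le p}$, so that $\E[(\max_i|\sigma_{i-k}^{(l)}|)^\gamma]=\E[(\max_{i\le p}|D_i|)^\gamma]$ does not depend on $(k,l)$. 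By \eqref{eq:frechet} the normalized maximum $a_{np}^{-2}\max_{i\le p}|D_i|$ converges to a Fr\'echet limit of index $\alpha/2$; since $\gamma<\alpha/2$ this limit has finite $\gamma$-th moment, and a bound on a slightly higher moment of order $\gamma'\in(\gamma,\alpha/2)$, obtained as in \cite{heiny:mikosch:2017:iid}, yields uniform integrability. Hence there is a finite $\bar C$ with $\sup_n a_{np}^{-2\gamma}\E[(\max_{i\le p}|D_i|)^\gamma]\le\bar C$.

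It then follows that $\limsup_{\nto}\P(a_{np}^{-2}\twonorm{\bfQr_n(s)-\bfQrm_n(s)}>\vep)\le \vep^{-\gamma}\bar C\sum_{(k,l)\notin\{0,\ldots,m\}^2}|h_{k,l}|^{\gamma}c_{l+s}^{\gamma}$, and it remains to send $m\to\infty$. Writing $b_l:=\sum_{k\ge 0}|h_{k,l}|^{\gamma}$ and using $c_{l+s}^{\gamma}\le b_{l+s}$ (valid for $\gamma\le 1$, with equality when $\gamma=1$), Cauchy--Schwarz gives $\sum_l b_l b_{l+s}\le\sum_l b_l^2\le(\max_l b_l)\sum_l b_l<\infty$, because $\sum_l b_l=\sum_{k,l}|h_{k,l}|^{\gamma}<\infty$ by \eqref{eq:2a} together with $\gamma\ge\delta$. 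Thus the displayed tail sum is the remainder of a convergent series and tends to $0$ as $m\to\infty$, which proves the lemma. I expect the main obstacle to be the uniform (in $(k,l)$ and in $n$) moment control of the normalized row maxima, coupled with a feasible choice of $\gamma$: one must take $\gamma\ge\delta$ to keep $\sum_{k,l}|h_{k,l}|^{\gamma}$ finite, yet $\gamma<\alpha/2$ to keep $a_{np}^{-2\gamma}\E[(\max_{i\le p}|D_i|)^\gamma]$ bounded, which is precisely why the two tail regimes of $\alpha$ must be treated separately.
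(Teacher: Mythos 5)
Your argument is correct in outline, and it is a genuinely more self-contained route than the paper's, which at this point simply writes out the entrywise difference $\bfQr_n(s)_{ij}-\bfQrm_n(s)_{ij}=\sum_{k\vee l>m}h_{k,l}h_{k+j-i,l+s}\sum_{t=1}^n\Zt_{i-k,t-l}$ and then defers entirely to the proof of Lemma~5.1 in \cite{davis:mikosch:pfaffel:2016} (the term $I_n^{(1)}$ there), noting that the only change is the shift $s$ in the second filter coefficient. Your decomposition of each summand into a diagonal matrix of row sums times a Toeplitz-type matrix, the Schur bound $\twonorm{\cdot}\le c_{l+s}$, the exchangeability observation that $\E[(\max_{i\le p}|\sigma_{i-k}^{(l)}|)^\gamma]$ is free of $(k,l)$, and the final summability argument via $\sum_l b_l b_{l+s}<\infty$ are all sound and make transparent exactly where the summability condition \eqref{eq:2a} and the constraint $\delta\le\gamma<\alpha/2$ enter; this is essentially the same fractional-moment strategy as the cited lemma, but written out in a cleaner matrix form. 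What each approach buys: the paper's version is one line but opaque; yours exposes the mechanism and isolates the single genuinely analytic ingredient.

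That ingredient deserves a caveat: the bound $\sup_n a_{np}^{-2\gamma}\E[(\max_{i\le p}|D_i|)^\gamma]<\infty$ does \emph{not} follow from the distributional convergence \eqref{eq:frechet} alone (weak convergence gives no moment control), nor from a crude union bound $\E[(\max_i|D_i|)^\gamma]\le p\,\E[|D|^\gamma]$, which diverges after normalization for $\gamma<\alpha/2$. What is actually needed is the uniform Nagaev-type large deviation estimate $\P(|D|>a_{np}^2x)\le C\,n\P(Z^2>a_{np}^2x)$ for $x\ge x_0$, valid because $a_{np}^2/a_n^2\to\infty$, which combined with $np\P(Z^2>a_{np}^2x)\le C x^{-\alpha/2+\epsilon}$ (Potter bounds) gives an integrable tail majorant and hence the uniform moment bound for any $\gamma<\alpha/2$. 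You correctly flag this as the main obstacle and attribute it to \cite{heiny:mikosch:2017:iid}, where the required tail bounds are indeed established; with that reference made precise, your proof is complete.
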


\begin{proof}
We observe that
\beao
\bfQr_n(s)_{ij}-\bfQrm_n(s)_{ij}=\sum_{k\vee l>m} h_{k,l}h_{k+j-i,l+s}\sum_{t=1}^n \Zt_{i-k,t-l}\,.
\eeao
Now one can follow the proof of Lemma 5.1 in \cite{davis:mikosch:pfaffel:2016} with particular focus on 
$I_n^{(1)}$. Notice that the only difference is the appearance of the additional quantity $s$ in $h_{k+j-i,l+s}$.
\end{proof}

%-----------------------------------------------------------------------
\subsection*{Truncation of the matrix $\bfM(s)$}
Recall the definition of $\bfM_a^{(m)}(s)$ in \eqref{eq:matrixmtr}. 
\begin{lemma}\label{lem:1}
Assume the conditions of Theorem~\ref{thm:mainn} and $s\ge 0$.
Then
\begin{equation*}
a_{np}^{-2}\Big\| \bfQrm_n(s) -\sum_{a=1}^p
  \Dr_a \bfM_a^{(m)}(s)\Big\|_2 \cip 0, \quad \nto.
\end{equation*}
\end{lemma}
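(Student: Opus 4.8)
The plan is to compute the $(i,j)$ entries of both matrices, subtract, and estimate the spectral norm of the resulting banded difference matrix after splitting off a negligible low-rank ``boundary'' piece. Writing $S_a^{(l)}=\sum_{t=1}^n \Zt_{a,t-l}=\sum_{u=1-l}^{n-l}\Zt_{a,u}$ and noting that $\Dr_a=S_a^{(0)}$, a direct computation from \eqref{eq:matrixmtr} (using $h_{kl}=0$ for $\min(k,l)<0$, so that the support of $h$ forces $k\in\{0,\ldots,m\}$ and $|i-j|\le m$) gives, for $i,j=1,\ldots,p$,
\begin{equation*}
(\bfQrm_n(s))_{ij}=\sum_{k,l=0}^m h_{k,l}\,h_{k+j-i,l+s}\,S_{i-k}^{(l)}\,,\qquad
\Big(\sum_{a=1}^p \Dr_a\bfM_a^{(m)}(s)\Big)_{ij}=\sum_{k,l=0}^m h_{k,l}\,h_{k+j-i,l+s}\,\Dr_{i-k}\,\1_{\{1\le i-k\le p\}}\,.
\end{equation*}
Hence the difference $\bfR_n:=\bfQrm_n(s)-\sum_{a=1}^p\Dr_a\bfM_a^{(m)}(s)$ has entries $\sum_{k,l=0}^m h_{k,l}h_{k+j-i,l+s}\big(S_{i-k}^{(l)}-\Dr_{i-k}\1_{\{1\le i-k\le p\}}\big)$, and it is banded with bandwidth at most $2m+1$.

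I would split $\bfR_n=\bfR_n^{(1)}+\bfR_n^{(2)}$ according to $1\le i-k\le p$ and $i-k\le 0$. On the first piece the summand is $S_{i-k}^{(l)}-S_{i-k}^{(0)}=\sum_{u=1-l}^{0}\Zt_{i-k,u}-\sum_{u=n-l+1}^{n}\Zt_{i-k,u}$, a difference of at most $2l\le 2m$ boundary terms; on the second piece one keeps the full sums $S_{i-k}^{(l)}$, and this piece is supported on the $\le m$ rows with $i\le m$. For the banded matrix $\bfR_n^{(1)}$ the elementary bound $\twonorm{\bfR_n^{(1)}}\le (2m+1)\max_{i,j}|(\bfR_n^{(1)})_{ij}|$ applies, and since the filter sum is finite each entry is bounded by $c\,\max_{1\le a\le p,\,0\le l\le m}|S_a^{(l)}-S_a^{(0)}|$. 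The latter is a maximum of (at most $m+1$ families of) $p$ i.i.d.\ sums of at most $2m$ summands that are regularly varying with index $\alpha/2$; standard extreme value theory gives $\max_{a\le p}|S_a^{(l)}-S_a^{(0)}|=O_\P(a_p^2)$. As $a_p^2/a_{np}^2$ equals $n^{-2/\alpha}$ up to a slowly varying factor and hence tends to $0$, we obtain $a_{np}^{-2}\twonorm{\bfR_n^{(1)}}\cip 0$.

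For the low-rank piece $\bfR_n^{(2)}$, having at most $m$ non-zero rows and bandwidth $2m+1$, I would use $\twonorm{\bfR_n^{(2)}}\le \frobnorm{\bfR_n^{(2)}}\le c\,\max_{1-m\le a\le 0,\,0\le l\le m}|S_a^{(l)}|$, a maximum over finitely many rows of full partial sums of $n$ i.i.d.\ copies of $\Zt$. When $\alpha>2(1+\beta)$ the centering in \eqref{eq:Ztilde} makes each such sum $O_\P(a_n^2)$, with $a_n^2/a_{np}^2$ equal to $p^{-2/\alpha}$ up to slow variation, which tends to $0$; when $\alpha<2$ it is again $O_\P(a_n^2)$; and when $2<\alpha<2(1+\beta)$ it is $O_\P(n)$ (dominated by $n\,\E[Z^2]$), while $n/a_{np}^2$ equals $n^{1-2(1+\beta)/\alpha}$ up to slow variation and tends to $0$ precisely because $\alpha<2(1+\beta)$. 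In every regime $a_{np}^{-2}\twonorm{\bfR_n^{(2)}}\cip 0$, which together with the bound on $\bfR_n^{(1)}$ finishes the proof.

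The main obstacle is the correct normalization of the boundary piece $\bfR_n^{(2)}$ across the parameter regimes: in the uncentered range $2<\alpha<2(1+\beta)$ the full partial sums are of order $n$ rather than $a_n^2$, and one must invoke $n=o(a_{np}^2)$, which holds exactly because $\alpha<2(1+\beta)$; this is also the point that makes the centering of $\Zt$ indispensable when $\alpha>2(1+\beta)$. Keeping track of the index bookkeeping---in particular the restriction $a\in\{1,\ldots,p\}$ in $\bfM_a^{(m)}(s)$, which is what creates the boundary rows---is the other point requiring care, though it only affects $O(m)$ rows and is therefore asymptotically harmless.
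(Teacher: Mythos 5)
Your decomposition is the same one the paper uses: the difference $\bfQrm_n(s)-\sum_{a=1}^p \Dr_a\bfM_a^{(m)}(s)$ splits into a ``boundary in $a$'' piece (the rows with $i-k\le 0$, which the paper isolates by first comparing with $\sum_{a=-m}^{p}\Dr_a\bfM_a^{(m)}(s)$) and a ``boundary in $t$'' piece (the terms $\sum_{u=1-l}^{0}\Zt_{i-k,u}-\sum_{u=n-l+1}^{n}\Zt_{i-k,u}$, which are exactly the paper's $\bfI^{(1)}-\bfI^{(2)}$ in \eqref{eq:lgfh}). Where you genuinely diverge is in how the interior piece is estimated: the paper bounds $\twonorm{\bfI^{(1)}}$ by the sum of all absolute entries and then invokes $\alpha/2$-stable limit theory for the column sums $\sum_{i=1}^p\Zt_{i-k,t-l}$, whereas you use the sharper banded-matrix bound $\twonorm{\bfR^{(1)}_n}\le(2m+1)\max_{i,j}|(\bfR^{(1)}_n)_{ij}|$ and control the maximum over $p$ rows of a fixed finite number of regularly varying summands by extreme value theory, giving $O_\P(a_p^2)$. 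Your route is arguably cleaner here, since a maximum of $p$ iid regularly varying variables is $O_\P(a_p^2)$ for every $\alpha\in(0,4)$ without any centering discussion, while the paper's sum-based argument has to worry about whether $\sum_{i=1}^p\Zt_{i-k,t-l}$ needs centering.

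There is, however, one omission you should repair: your case analysis for the boundary piece $\bfR_n^{(2)}$ covers $\alpha<2$, $2<\alpha<2(1+\beta)$ and $\alpha>2(1+\beta)$, but the hypotheses only exclude $\alpha=2(1+\beta)$, so $\alpha=2$ with $\beta>0$ is in scope. If moreover $\E[Z^2]=\infty$, the full sums $S_a^{(l)}$ are neither $O_\P(a_n^2)$ nor $O_\P(n)$: one must center by $n\,\E[Z^2\1(|Z|\le a_n)]$ to get a $1$-stable limit, and then check that this centering is $o(a_{np}^2)$. This is where the paper's Karamata argument enters: $n\,a_{np}^{-2}\E[Z^2\1(|Z|\le a_n)]=\big[n\,a_n^{-2}\E[Z^2\1(|Z|\le a_n)]\big]\,a_n^2/a_{np}^2$, where the bracket is slowly varying in $n$ and $a_n^2/a_{np}^2$ decays like a positive power of $n$ precisely because $\beta>0$ (the troublesome combination $\alpha=2$, $\beta=0$ being excluded since then $\alpha=2(1+\beta)$). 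With this extra case your argument is complete; everything else, including the treatment of the uncentered range $2<\alpha<2(1+\beta)$ via $n=o(a_{np}^2)$, matches the paper's reasoning (see Remark~\ref{re:cen}).
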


\begin{proof}
We start by assuming $\E[Z^2]=\infty$. We have
\beao
\lefteqn{a_{np}^{-2} \big\| \bfQrm(s) -\sum_{a=1}^p
  \Dr_a \bfM_a^{(m)}(s) \big\|_2}\nonumber\\ &\le& a_{np}^{-2}\big\|\bfQrm(s) -\sum_{a=-m}^p
  \Dr_a \bfM_a^{(m)}(s)\big\|_2+a_{np}^{-2} \big\|\sum_{a=-m}^0 \Dr_a \bfM_a^{(m)}(s)\big\|_2\,.
\eeao
We will show that
\begin{eqnarray}\label{eq:help2}
a_{np}^{-2} \big\|\sum_{a=-m}^0 \Dr_a \bfM_a^{(m)}(s)\big\|_2\le a_{np}^{-2}\max_{i=-m,\ldots,0} |\Dr_i|\;
\sum_{a=-m}^0\|\bfM_a^{(m)}(s)\|_2\cip 0\,.
\end{eqnarray}\noindent 
Indeed, $\|\bfM_a^{(m)}(s)\|_2< \infty$ for each $a$. Moreover, for every fixed $i$ and $\alpha\in (0,4)\backslash\{2\}$,
$(a_{n}^{-2} \Dr_i)$ converges in \ds\ to an $\alpha/2$-stable \rv\
as $\nto$. In the case $\alpha=2$, we still have
\beao
a_n^{-2} \big(D_i-n\,\E[Z^2 \1(|Z|\le a_n)]\big)\std \xi_1
\eeao
for a 1-stable \rv . In this case, we also have 
\beao
\dfrac{n}{a_{np}^2} \E[Z^2 \1(|Z|\le a_n)]= \big[\dfrac{n}{a_n^2} \E [Z^2 \1(|Z|\le a_n)]\big]\, 
\dfrac{a_n^2}{a_{np}^2}\,,
\eeao
where the first term in brackets is a \slvary\ \fct\ of $n$ by virtue of Karamata's theorem, 
while the second one converges to zero
at the rate of some positive power of $n$ provided $\beta>0$, hence the \rhs\ converges to zero in the latter case.  Fortunately, the case $\alpha=2$, $\beta=0$
is excluded by the assumptions of  Theorem~\ref{thm:mainn}.
Combining all the facts from above, \eqref{eq:help2} follows.
\par
Observe that
\begin{equation}\label{eq:ghjgh}
 \Big( \bfQrm(s) -\sum_{a=-m}^p
  \Dr_a \bfM_a^{(m)}(s)\Big)_{ij}
    = \sum_{t=1}^n \sum_{k=0}^m \sum_{l=0}^m h_{kl}h_{j-i+k, l+s}
    \Zt_{i-k, t-l} - \sum_{k=i-p}^{i+m} \Dr_{i-k} (\bfM_{i-k}^{(m)}(s))_{ij}
\end{equation}
  and note that $(\bfM_{i-k}^{(m)}(s))_{ij}$ is non-zero only if $i-k \leq i \leq
  i-k+m$, i.e., $0 \leq k \leq m$. This fact and the structure of $\bfM_a^{(m)}$ imply that the right-hand side of \eqref{eq:ghjgh} can be written in the following form:
\begin{equation}\label{eq:lgfh}
\sum_{k=0}^m \sum_{l=0}^m h_{kl} h_{j-i+k, l+s}\left(
      \sum_{t=1}^l \Zt_{i-k, t-l} - \sum_{t=n-l+1}^n \Zt_{i-k, t}
    \right) = \bfI_{ij}^{(1)}-\bfI_{ij}^{(2)}.
\end{equation}
For $a_{np}^{-2}\big\|\bfQrm(s) -\sum_{a=-m}^p
  \Dr_a \bfM_a^{(m)}(s)\big\|_2 \cip 0$ it suffices to show that
$a_{np}^{-2} \|\bfI^{(i)}\|_2\cip 0$, $i=1,2.$
We will show the limit relation for $i=1$; the case $i=2$ is
analogous. 
In the sequel, we interpret $h_{kl}$ as $0$ for $\max(k,l)>m$ or $\min(k,l)<0$. 
For the non-symmetric $\bfI^{(1)}$, we have 
%$\twonorm{\bfI^{(1)}}\le\|{\bfI^{(1)}}\|_1$ where
\begin{equation*}
\|{\bfI^{(1)}}\|_2 \le \sum_{i=1}^p \sum_{j=1}^p|\bfI^{(1)}_{ij}|\,.
\end{equation*}
Since $(h_{kl})$ contains only finitely many non-zero elements it is not difficult to see that it suffices to prove
\beam\label{eq:joh6}
a_{np}^{-2}\sum_{k,l=0}^m  \sum_{t=1}^m \sum_{i=1}^p \Zt_{i-k, t-l}\stp 0\,.
\eeam
 Fix $k,t-l$. If $\alpha\in (0,4)\backslash\{2\}$ or $\alpha=2$ and $\E[Z^2]<\infty$ then  $(a_p^{-2}  \sum_{i=1}^p \Zt_{i-k, t-l})$ has an $\alpha/2$-stable limit.  
Therefore \eqref{eq:joh6} holds. 
Next consider the case $\alpha=2$ and $\E[Z^2]=\infty$. Write $c_p=\E [Z^2\1(|Z|\le a_p)]$ and observe that $(c_p)$ is a \slvary\ \seq .  
In this case, 
\beao
a_p^{-2}\sum_{i=1}^p \Zt_{i-k, t-l}=a_p^{-2}\sum_{i=1}^p (\Zt_{i-k, t-l}-c_p)+ \dfrac{p\,c_p}{a_p^2}\,.
\eeao
The first quantity converges to a totally skewed to the right 1-stable \ds , while $(p\,c_p/a_p^2)$ is a \slvary\ \seq .
Since 
\beao
\dfrac{p\,c_p}{a_{np}^2} =\dfrac{p\,c_p}{a_p^2}\,\dfrac{a_{p}^2}{a_{np}^2}\to 0\,,
\eeao
we may conclude that \eqref{eq:joh6} also holds in this case. This finishes the proof for $\E[Z^2]=\infty$. The case $\E[Z^2]<\infty$ is analogous.
\end{proof}
\begin{remark}\label{re:cen}{\em
Note that the stable convergence which we used to justify \eqref{eq:help2} requires centering in the case $\E[Z^2]<\infty$. From \eqref{eq:Ztilde} we see that one only centers if $\alpha>2(1+\beta)$. 
Fortunately, if $\alpha\in [2, 2(1+\beta))$ the centering is negligible in view of 
$\E[Z^2]\,n/a_{np}^2 \to 0$.
%\begin{equation}\label{eq:addcentering}
If $\alpha>2(1+\beta)$, we have $n/a_{np}^2 \to \infty$. This explains the appearance of the critical value $\alpha^{\star}=2(1+\beta)$ in many places within this paper; see also \cite{heiny:mikosch:2017:iid}. For $\alpha=\alpha^{\star}$, the asymptotic behavior of $n/a_{np}^2$ depends on the slowly varying function $L$ in the distribution of $Z$, which was defined in \eqref{eq:27}.
}\end{remark}

%-----------------------------------------------------------------------
\subsection*{Truncation of the sum}

From \eqref{eq:help6} recall  the definition of the order statistics
\begin{equation*}
\Dr_{(p)}^2=\Dr_{L_p}^2<\cdots  <  \Dr_{(1)}^2=\Dr_{L_1}^2\quad \as
\end{equation*}\noindent
of the iid sequence $\Dr_1^2,\ldots,\Dr_p^2$. % defined in \eqref{eq:ll}.
Here we assume without loss of generality that there are no ties in the
sample.
Otherwise, if two or more of the $\Dr_i^2$'s are equal, randomize the
corresponding $L_i$'s over the respective indices.

We choose an integer sequence $k=k_p\to\infty$ such that $k_p^2=o(p)$ as $\nto$
and recall the definition of the event $A_n$ from \eqref{eq:setA}.
Since the $\Dr_i$'s are iid, $L_1,\ldots,L_k$ have a uniform distribution
on the set of
distinct $k$-tuples from $(1,\ldots,p)$ and
\begin{equation}\label{eq:31}
\lim_{\nto} \P(A_n^c)\le \lim_{\nto} k(k-1) \dfrac{pm (p-2)\ldots (p-k+1)}{p(p-1)\ldots (p-k+1)}\le \lim_{\nto}
\dfrac{k^2 m}{p-1}= 0 \,.
\end{equation}\noindent
In this step of the proof we approximate $\sum_{i=1}^p \Dr_i \bfM_i^{(m)}(s)$
by the matrix $\sum_{i=1}^k \Dr_{L_i}  \bfM_{L_i}(s)^{(m)}(s)$ which is block
diagonal with high probability.

\begin{lemma}\label{lem:2}
Assume the conditions of Theorem~\ref{thm:mainn} and $s\ge 0$. Consider an integer sequence  $(k_p)$ such that
$k_p\to\infty$ and $k_p^2=o(p)$ as $\nto$. Then
\begin{equation*}
a_{np}^{-2}\Big\|\sum_{i=1}^p
  \Dr_i \bfM_i^{(m)}(s) -\sum_{i=1}^k
  \Dr_{L_i} \bfM_{L_i}^{(m)}(s)\Big\|_2\cip 0\,,
\quad \nto \,.
\end{equation*}
\end{lemma}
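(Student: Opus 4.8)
The plan is to observe that the difference is exactly the sum of the \emph{discarded} blocks and to bound its spectral norm by the largest discarded value $|\Dr_{(k+1)}|$, which is negligible after normalization.

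\emph{Reduction to a tail sum.} Since $(L_1,\ldots,L_p)$ is a permutation of $(1,\ldots,p)$, reindexing gives $\sum_{a=1}^p \Dr_a \bfM_a^{(m)}(s) = \sum_{i=1}^p \Dr_{L_i}\bfM_{L_i}^{(m)}(s)$, so that
\begin{equation*}
\sum_{i=1}^p \Dr_i \bfM_i^{(m)}(s) - \sum_{i=1}^k \Dr_{L_i}\bfM_{L_i}^{(m)}(s) = \sum_{i=k+1}^p \Dr_{L_i}\bfM_{L_i}^{(m)}(s)\,.
\end{equation*}
By the ordering \eqref{eq:help6} one has $\max_{i>k}|\Dr_{L_i}| = |\Dr_{(k+1)}|$, and I will show that
\begin{equation*}
\Big\|\sum_{i=k+1}^p \Dr_{L_i}\bfM_{L_i}^{(m)}(s)\Big\|_2 \le (2m+1)\,c(m)\, |\Dr_{(k+1)}|
\end{equation*}
for a constant $c(m):=\|\bfM^{(m)}(s)\|_2$ depending only on the fixed $m$ (and $s$). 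The lemma then follows from $a_{np}^{-2}|\Dr_{(k+1)}| \cip 0$, which holds because $k_p\to\infty$; this is the same fact (with $k$ in place of $k+1$) that is invoked below \eqref{eq:drghrf} and taken from \cite{heiny:mikosch:2017:iid}.

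\emph{The band structure.} Each $\bfM_a^{(m)}(s)$ is supported on rows and columns lying in a window of length at most $2m+1$ around $a$, and, by \eqref{eq:matrixmtr}, its norm satisfies $\|\bfM_a^{(m)}(s)\|_2\le c(m)$ uniformly in $a$, since interior blocks are exact shifts of $\bfM^{(m)}(s)$ while boundary blocks are submatrices thereof. To turn the overlapping sum into non-overlapping blocks, I split the position index into the $2m+1$ residue classes $R_r=\{a:a\equiv r \pmod{2m+1}\}$, $r=0,\ldots,2m$. Within one class any two positions differ by at least $2m+1$, hence their windows are disjoint; consequently $\sum_{a\in R_r,\,a\in\{L_{k+1},\ldots,L_p\}} \Dr_a \bfM_a^{(m)}(s)$ is block diagonal, so its largest singular value equals the maximum of the block norms, which is at most $c(m)\max_{i>k}|\Dr_{L_i}|=c(m)|\Dr_{(k+1)}|$. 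Summing over the $2m+1$ classes by the triangle inequality yields the displayed bound.

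\emph{Main obstacle.} The only genuine point is the overlap of the embedded windows, handled here by the residue-class splitting; an equally valid route is the interpolation bound $\|A\|_2\le\big(\max_i\sum_j|A_{ij}|\cdot\max_j\sum_i|A_{ij}|\big)^{1/2}$, noting that every row and every column of the tail sum receives contributions from at most $2m+1$ blocks, each of order $|\Dr_{(k+1)}|$. Everything else is routine: $m$ being fixed, $(2m+1)c(m)$ is a finite constant, and multiplying the bound by $a_{np}^{-2}$ gives a product converging to $0$ in probability by the tail estimate for $\Dr_{(k+1)}$.
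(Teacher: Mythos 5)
Your proof is correct and follows essentially the same route as the paper: reduce the difference to the tail sum $\sum_{i=k+1}^p \Dr_{L_i}\bfM_{L_i}^{(m)}(s)$, exploit the bounded overlap of the shifted $(2m+1)$-windows to bound the spectral norm by a constant (depending only on $m$) times $|\Dr_{(k+1)}|$, and conclude from $a_{np}^{-2}|\Dr_{(k+1)}|\cip 0$. The only difference is cosmetic: where you use the residue-class splitting (a clean deterministic argument valid on the whole probability space), the paper invokes Cauchy's interlacing theorem together with the event $A_n$ to obtain the same bound $c\,m\,|\Dr_{(k+1)}|$.
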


\begin{proof}
%\textbf{The case $\E[Z^2]=\infty$ or $\alpha>2(1+\beta)$.}  
We have
\begin{equation*}
a_{np}^{-2}\Big\|\sum_{i=1}^p
  \Dr_i \bfM_i^{(m)}(s) -\sum_{i=1}^k
  \Dr_{L_i} \bfM_{L_i}^{(m)}(s)\Big\|_2
=a_{np}^{-2}\Big\| \sum_{i=k+1}^p
  \Dr_{L_i} \bfM_{L_i}^{(m)}(s)\Big\|_2\,,
\end{equation*}\noindent
and therefore it suffices to show that the \rhs\ converges to zero in
probability. Since the $\bfM_{i}^{(m)}(s), i=1,\ldots,p$, consist of block matrices of size $m$ shifted by $i$, at most $2m$ of them can overlap. By Cauchy's interlacing theorem, see \cite[Lemma~22]{tao:vu:2012}, we obtain for $\delta>0$,
\begin{equation}\label{eq:98}
\P\Big(a_{np}^{-2}\Big\|\sum_{i=k+1}^p \Dr_{L_i}
\bfM_{L_i}^{(m)}(s)\Big\|_2>\delta\Big)
\le \P\Big(c\, a_{np}^{-2} m |\Dr_{L_{k+1}}|>\delta\Big) +\P(A_n^c) \to 0\,, \quad \nto\,.
\end{equation}
%In the case  $\E[Z^2]<\infty$ and $\alpha\in [2, 2(1+\beta))$ one may, for example, assume centering in view of \eqref{eq:addcentering}.
\begin{comment}
\textbf{The case  $\E[Z^2]<\infty$ and $\alpha\in [2, 2(1+\beta))$.} It is sufficient to prove that
\begin{equation*}
\begin{split}
a_{np}^{-2}\Big\| \sum_{i=k+1}^p \Dr_{L_i} \bfM_{L_i}^{(m)}(s)\Big\|_2
&\le a_{np}^{-2}\Big\| \sum_{i=k+1}^p (\Dr_{L_i}-\E[D]) \bfM_{L_i}^{(m)}(s)\Big\|_2\\
 &\quad + \E[\Dr]\Big\| \sum_{i=k+1}^p \bfM_{L_i}^{(m)}(s)  \Big\|_2 \cip 0\,, \quad \nto\,.
\end{split}
\end{equation*}
Using the same argument as above, we conclude that for $\delta>0$, 
\begin{equation*}
\P\Big(a_{np}^{-2}\Big\|\sum_{i=k+1}^p (\Dr_{L_i}-\E[D])
\bfM_{L_i}^{(m)}(s)\Big\|_2>\delta\Big)
\le \P\Big(c\, a_{np}^{-2} m \Dr_{(k+1)}>\delta\Big)\to 0\,, \quad \nto\,.
\end{equation*}

Since the $\bfM_{i}^{(m)}(s), i=1,\ldots,p$, consist of block matrices of size $m$ shifted by $i$, at most $2m$ of them can overlap. By Cauchy's interlacing theorem, see \cite[Lemma~22]{tao:vu:2012},
\begin{equation*}
\frac{\E[\Dr]}{a_{np}^2} \Big\| \sum_{i=k+1}^p \bfM_{L_i}^{(m)}(s)  \Big\|_2 \le c m \frac{\E[\Dr]}{a_{np}^2} \to 0, \quad \nto.
\end{equation*}
Now we can follow the lines of the proof in the case $\E[Z^2]<\infty$ to show the desired result.
\end{comment}
\end{proof}

\subsection*{Conclusion} We found several approximations of $\bfC_n(s)$. The proof of Theorem~\ref{prop:main} consists of a direct application of Proposition~\ref{prop:withoutQ} and Lemmas~\ref{lem:Qm}-\ref{lem:2}. 

\begin{remark}[The case $\alpha=2(1+\beta)$.]\label{rem:speciala}
{\em For clarity of presentation we excluded this case in \eqref{eq:sample}. If $\alpha= 2(1+\beta)$, the definition of $\bfC_n(s)$ in \eqref{eq:sample} depends on the distribution of $Z$ and the growth of $p$. More precisely, 
if $ n/a_{np}^2 \to 0$ or $\E[Z^2]=\infty$, we set $\bfC_n(s)=\X_n(0)\X_n(s)'$. Otherwise we define $\bfC_n(s)=\X_n(0)\X_n(s)'-\E[\X_n(0)\X_n(s)']$. The proofs are exactly the same, except in the case $\alpha=2$ and $\beta=0$ where one has to additionally distinguish between finite or infinite variance of $Z$. }
\end{remark}

%------------------------------------------------
\appendix
\section{Perturbation theory for eigenvectors}\label{appendix:A}\setcounter{equation}{0}

We state Proposition~A.1 in Benaych-Georges and P\'{e}ch\'{e} \cite{benaych:peche:2014}.
\begin{proposition}\label{prop:perturbation}
Let $\bfH$ be a Hermitean matrix and $\bfv$ a unit vector such that for some  $\la\in \R$, $\vep>0$,
\begin{equation*}
\bfH\,\bfv= \la\,\bfv + \vep\, \bfw\,,
\end{equation*}
where $\bfw$ is a unit vector such that $\bfw \perp \bfv$.
\begin{enumerate}
\item
Then $\bfH$ has an eigenvalue $\lambda_{\vep}$ \st\ $|\la-\la_\vep|\le \vep$.
\item
If $\bfH$ has only one eigenvalue $\la_\vep$ (counted with multiplicity) \st\ $|\la-\la_\vep|\le \vep$ and all other eigenvalues 
are at distance at least $d>\vep$ from $\la$. Then for a unit eigenvector $\bfv_\vep$ associated with $\lambda_{\vep}$ we have
\begin{equation*}
\ltwonorm{\bfv_{\vep} - \Proj_\bfv(\bfv_{\vep})} \le \frac{2\, \vep}{d-\vep}\,,
\end{equation*}
where $\Proj_{\bfv}$ denotes the orthogonal projection onto \rm{Span}$(\bfv)$.
\end{enumerate}
\end{proposition}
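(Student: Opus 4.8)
The plan is to prove both statements directly from the spectral theorem. Since $\bfH$ is Hermitean it admits an orthonormal eigenbasis $\bfe_1,\ldots,\bfe_N$ with real eigenvalues $\mu_1,\ldots,\mu_N$. First I would write $\bfv=\sum_i c_i\bfe_i$ with $\sum_i|c_i|^2=1$ and record the identity $\bfH\bfv-\la\bfv=\sum_i(\mu_i-\la)c_i\bfe_i$. The hypothesis $\bfH\bfv=\la\bfv+\vep\,\bfw$ with $\bfw$ a unit vector then yields $\sum_i(\mu_i-\la)^2|c_i|^2=\ltwonorm{\bfH\bfv-\la\bfv}^2=\vep^2$, which is the single algebraic identity driving the whole argument.

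For part (1), I would simply estimate $\vep^2=\sum_i(\mu_i-\la)^2|c_i|^2\ge\big(\min_i|\mu_i-\la|\big)^2\sum_i|c_i|^2=\big(\min_i|\mu_i-\la|\big)^2$, so that some eigenvalue $\la_\vep:=\mu_{i_0}$ satisfies $|\la-\la_\vep|\le\vep$.

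For part (2), the uniqueness hypothesis singles out exactly one index $i_0$ with $\mu_{i_0}=\la_\vep$ (a simple eigenvalue), the corresponding unit eigenvector $\bfe_{i_0}$ being equal to $\bfv_\vep$ up to phase, while every remaining eigenvalue obeys $|\mu_i-\la|\ge d$. Splitting off the $i_0$ term from the identity above gives
\[
\vep^2\ge\sum_{i\ne i_0}(\mu_i-\la)^2|c_i|^2\ge d^2\sum_{i\ne i_0}|c_i|^2=d^2\big(1-|c_{i_0}|^2\big)=d^2\big(1-|\langle\bfv,\bfv_\vep\rangle|^2\big).
\]
Since $\Proj_\bfv$ is the orthogonal projection onto the line spanned by the unit vector $\bfv$, I have $\ltwonorm{\bfv_\vep-\Proj_\bfv(\bfv_\vep)}^2=1-|\langle\bfv,\bfv_\vep\rangle|^2\le\vep^2/d^2$, and taking square roots even yields the sharper bound $\ltwonorm{\bfv_\vep-\Proj_\bfv(\bfv_\vep)}\le\vep/d$, which is at most $2\vep/(d-\vep)$ for every $d>\vep$; the stated estimate follows a fortiori.

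The arguments are elementary, so the only genuine care is bookkeeping. I must invoke the simplicity of $\la_\vep$ to guarantee that exactly one coefficient $c_{i_0}$ is separated out; without it the mass $\sum_{i:\mu_i=\la_\vep}|c_i|^2$ need not coincide with $|\langle\bfv,\bfv_\vep\rangle|^2$ and the identification of the projection error would fail. I must also avoid confusing the projection onto $\mathrm{Span}(\bfv)$ appearing in the statement with the projection onto the $\la_\vep$-eigenspace; the phase ambiguity of $\bfv_\vep$ is harmless because only $|\langle\bfv,\bfv_\vep\rangle|$ enters. I expect the main (minor) obstacle to be precisely this identification of $c_{i_0}$ with $\langle\bfv,\bfv_\vep\rangle$ under the simplicity hypothesis.
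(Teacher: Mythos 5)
Your proof is correct. Note first that the paper itself gives no proof of this proposition: it is imported verbatim as Proposition~A.1 of Benaych-Georges and P\'ech\'e \cite{benaych:peche:2014}, so there is no internal argument to compare against. Your spectral-decomposition argument is a clean, self-contained substitute. The identity $\sum_i(\mu_i-\la)^2|c_i|^2=\vep^2$ immediately gives part (1), and in part (2) you correctly use simplicity of $\la_\vep$ to identify $|c_{i_0}|$ with $|\langle\bfv,\bfv_\vep\rangle|$ and Pythagoras to convert $1-|\langle\bfv,\bfv_\vep\rangle|^2$ into the squared projection error; the phase ambiguity is indeed harmless. Your bound $\vep/d$ is genuinely sharper than the stated $2\vep/(d-\vep)$ (since $d+\vep>0$), and a fortiori implies it. For context, the constant $2\vep/(d-\vep)$ in the cited version arises from a slightly different route: one bounds the residual $\bfr=\bfv-\langle\bfv_\vep,\bfv\rangle\bfv_\vep$ via $\ltwonorm{(\bfH-\la_\vep)\bfr}\le|\la-\la_\vep|+\vep\le 2\vep$ and inverts $\bfH-\la_\vep$ on the orthogonal complement of $\bfv_\vep$, where the spectral gap is only $d-\vep$; your approach avoids shifting from $\la$ to $\la_\vep$ and so keeps the full gap $d$. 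Either way the statement as used in the paper (where only the order $\vep/d\to 0$ matters) is unaffected.
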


\section*{Acknowledgments}\setcounter{equation}{0}

We thank Olivier Wintenberger for reading the 
manuscript and fruitful discussions. This research was started 
when both authors visited the Department of Statistics at
Columbia University. We are most grateful to Richard A. Davis for
his hospitality and stimulating discussions.

\bibliography{libraryjohannes}

\begin{thebibliography}{10}

\bibitem{auffinger:arous:peche:2009}
{\sc Auffinger, A., Ben~Arous, G., and P{\'e}ch{\'e}, S.}
\newblock Poisson convergence for the largest eigenvalues of heavy tailed
  random matrices.
\newblock {\em Ann. Inst. Henri Poincar\'e Probab. Stat. 45}, 3 (2009),
  589--610.

\bibitem{bai:silverstein:2010}
{\sc Bai, Z., and Silverstein, J.~W.}
\newblock {\em Spectral Analysis of Large Dimensional Random Matrices},
  second~ed.
\newblock Springer Series in Statistics. Springer, New York, 2010.

\bibitem{bai:wang:2015}
{\sc Bai, Z., and Wang, C.}
\newblock A note on the limiting spectral distribution of a symmetrized
  auto-cross covariance matrix.
\newblock {\em Statist. Probab. Lett. 96\/} (2015), 333--340.

\bibitem{banna:merlevede:peligrad:2015}
{\sc Banna, M., Merlev{\`e}de, F., and Peligrad, M.}
\newblock On the limiting spectral distribution for a large class of symmetric
  random matrices with correlated entries.
\newblock {\em Stochastic Process. Appl. 125}, 7 (2015), 2700--2726.

\bibitem{benaych:peche:2014}
{\sc Benaych-Georges, F., and P{\'e}ch{\'e}, S.}
\newblock Localization and delocalization for heavy tailed band matrices.
\newblock {\em Ann. Inst. Henri Poincar\'e Probab. Stat. 50}, 4 (2014),
  1385--1403.

\bibitem{bhatia:1997}
{\sc Bhatia, R.}
\newblock {\em Matrix Analysis}, vol.~169 of {\em Graduate Texts in
  Mathematics}.
\newblock Springer-Verlag, New York, 1997.

\bibitem{brockwell:davis:1991}
{\sc Brockwell, P.~J., and Davis, R.~A.}
\newblock {\em Time series: theory and methods}, second~ed.
\newblock Springer Series in Statistics. Springer-Verlag, New York, 1991.

\bibitem{davis:heiny:mikosch:xie:2016}
{\sc Davis, R.~A., Heiny, J., Mikosch, T., and Xie, X.}
\newblock Extreme value analysis for the sample autocovariance matrices of
  heavy-tailed multivariate time series.
\newblock {\em Extremes 19}, 3 (2016), 517--547.

\bibitem{davis:mikosch:pfaffel:2016}
{\sc Davis, R.~A., Mikosch, T., and Pfaffel, O.}
\newblock Asymptotic theory for the sample covariance matrix of a heavy-tailed
  multivariate time series.
\newblock {\em Stochastic Process. Appl. 126}, 3 (2016), 767--799.

\bibitem{davis:pfaffel:stelzer:2014}
{\sc Davis, R.~A., Pfaffel, O., and Stelzer, R.}
\newblock Limit theory for the largest eigenvalues of sample covariance
  matrices with heavy-tails.
\newblock {\em Stochastic Process. Appl. 124}, 1 (2014), 18--50.

\bibitem{dobriban:2015}
{\sc Dobriban, E.}
\newblock Efficient computation of limit spectra of sample covariance matrices.
\newblock {\em Random Matrices Theory Appl. 4}, 4 (2015), 1550019, 36.

\bibitem{embrechts:kluppelberg:mikosch:1997}
{\sc Embrechts, P., Kl{\"u}ppelberg, C., and Mikosch, T.}
\newblock {\em
  \href{http://www.springer.com/mathematics/quantitative+finance/book/978-3-540-60931-5}{Modelling
  Extremal Events for Insurance and Finance}}, vol.~33 of {\em Applications of
  Mathematics (New York)}.
\newblock Springer, Berlin, 1997.

\bibitem{heiny:mikosch:2017:corr}
{\sc Heiny, J., and Mikosch, T.}
\newblock Almost sure convergence of the largest and smallest eigenvalues of
  high-dimensional sample correlation matrices.
\newblock {\em Stochastic Processes and their Applications\/} (2017).

\bibitem{heiny:mikosch:2017:iid}
{\sc Heiny, J., and Mikosch, T.}
\newblock Eigenvalues and eigenvectors of heavy-tailed sample covariance
  matrices with general growth rates: {T}he iid case.
\newblock {\em Stochastic Process. Appl. 127}, 7 (2017), 2179--2207.

\bibitem{jin:wang:chen:bai:nair:2014}
{\sc Jin, B., Wang, C., Bai, Z.~D., Nair, K.~K., and Harding, M.}
\newblock Limiting spectral distribution of a symmetrized auto-cross covariance
  matrix.
\newblock {\em Ann. Appl. Probab. 24}, 3 (2014), 1199--1225.

\bibitem{johnstone:2001}
{\sc Johnstone, I.~M.}
\newblock On the distribution of the largest eigenvalue in principal components
  analysis.
\newblock {\em Ann. Statist. 29}, 2 (2001), 295--327.

\bibitem{lam:yao:2012}
{\sc Lam, C., and Yao, Q.}
\newblock Factor modeling for high-dimensional time series: inference for the
  number of factors.
\newblock {\em Ann. Statist. 40}, 2 (2012), 694--726.

\bibitem{li:pan:yao:2015}
{\sc Li, Z., Pan, G., and Yao, J.}
\newblock On singular value distribution of large-dimensional autocovariance
  matrices.
\newblock {\em J. Multivariate Anal. 137\/} (2015), 119--140.

\bibitem{li:wang:yao:2017}
{\sc Li, Z., Wang, Q., and Yao, J.}
\newblock Identifying the number of factors from singular values of a large
  sample auto-covariance matrix.
\newblock {\em Ann. Statist. 45}, 1 (2017), 257--288.

\bibitem{liu:aue:paul:2015}
{\sc Liu, H., Aue, A., and Paul, D.}
\newblock On the mar{\v{c}}enko--pastur law for linear time series.
\newblock {\em The Annals of Statistics 43}, 2 (2015), 675--712.

\bibitem{priestley:1981}
{\sc Priestley, M.~B.}
\newblock {\em Spectral analysis and time series. {V}ols. 1 and 2}.
\newblock Academic Press, Inc. [Harcourt Brace Jovanovich, Publishers],
  London-New York, 1981.
\newblock Univariate series, Probability and Mathematical Statistics.

\bibitem{resnick:2007}
{\sc Resnick, S.~I.}
\newblock {\em Heavy-Tail Phenomena: Probabilistic and Statistical Modeling}.
\newblock Springer Series in Operations Research and Financial Engineering.
  Springer, New York, 2007.

\bibitem{samorodnitsky:taqqu:1994}
{\sc Samorodnitsky, G., and Taqqu, M.~S.}
\newblock {\em Stable Non-{G}aussian Random Processes}.
\newblock Stochastic Modeling. Chapman \& Hall, New York, 1994.
\newblock Stochastic models with infinite variance.

\bibitem{soshnikov:2004}
{\sc Soshnikov, A.}
\newblock Poisson statistics for the largest eigenvalues of {W}igner random
  matrices with heavy tails.
\newblock {\em Electron. Comm. Probab. 9\/} (2004), 82--91 (electronic).

\bibitem{soshnikov:2006}
{\sc Soshnikov, A.}
\newblock Poisson statistics for the largest eigenvalues in random matrix
  ensembles.
\newblock In {\em Mathematical physics of quantum mechanics}, vol.~690 of {\em
  Lecture Notes in Phys.} Springer, Berlin, 2006, pp.~351--364.

\bibitem{tao:vu:2012}
{\sc Tao, T., and Vu, V.}
\newblock Random covariance matrices: universality of local statistics of
  eigenvalues.
\newblock {\em Ann. Probab. 40}, 3 (2012), 1285--1315.

\bibitem{wang:jin:bai:nair:2015}
{\sc Wang, C., Jin, B., Bai, Z.~D., Nair, K.~K., and Harding, M.}
\newblock Strong limit of the extreme eigenvalues of a symmetrized auto-cross
  covariance matrix.
\newblock {\em Ann. Appl. Probab. 25}, 6 (2015), 3624--3683.

\bibitem{wang:aue:paul:2017}
{\sc Wang, L., Aue, A., and Paul, D.}
\newblock Spectral analysis of sample autocovariance matrices of a class of
  linear time series in moderately high dimensions.
\newblock {\em Bernoulli 23}, 4A (2017), 2181--2209.

\bibitem{wang:yao:2015}
{\sc Wang, Q., and Yao, J.}
\newblock On singular values distribution of a matrix large auto-covariance in
  the ultra-dimensional regime.
\newblock {\em Random Matrices Theory Appl. 4}, 4 (2015), 1550015, 25.

\bibitem{wang:yao:2016}
{\sc Wang, Q., and Yao, J.}
\newblock Moment approach for singular values distribution of a large
  auto-covariance matrix.
\newblock {\em Ann. Inst. Henri Poincar\'e Probab. Stat. 52}, 4 (2016),
  1641--1666.

\bibitem{yao:zheng:bai:2015}
{\sc Yao, J., Zheng, S., and Bai, Z.}
\newblock {\em Large sample covariance matrices and high-dimensional data
  analysis}.
\newblock Cambridge Series in Statistical and Probabilistic Mathematics.
  Cambridge University Press, New York, 2015.

\end{thebibliography}

\end{document}